\DeclareMathOperator{\Dom}{Dom}
\DeclareMathOperator{\AC}{AC}
\renewcommand{\Re}{\operatorname{Re}}
\renewcommand{\Im}{\operatorname{Im}}
\newcommand{\abs}[1]{\lvert#1\rvert}
\newcommand{\norm}[1]{\lVert#1\rVert}
\newcommand{\Norm}[1]{\left\lVert#1\right\rVert}
\newcommand{\jap}[1]{\langle#1\rangle}
\newcommand{\bbR}{{\mathbb R}}
\newcommand{\bbC}{{\mathbb C}}
\newcommand{\bH}{\mathbf{H}}
\newcommand{\bR}{\mathbf{R}}
\newcommand{\calK}{\mathcal{K}}
\newcommand{\calO}{\mathcal{O}}
\newcommand{\dd}{\mathrm d}
\newcommand{\ii}{\mathrm i}
\newcommand{\ee}{\mathrm e}
\newcommand{\myF}{{F}}
\newcommand{\oldF}{{\mathbf{F}}}
\newcommand{\newe}{{\mathbf{e}}}
\numberwithin{equation}{section}
\theoremstyle{plain}
\newtheorem{theorem}{\bf Theorem}[section]
\newtheorem*{theorem*}{Theorem}
\newtheorem{lemma}[theorem]{\bf Lemma}
\newtheorem{proposition}[theorem]{\bf Proposition}
\newtheorem*{proposition*}{\bf Proposition}
\theoremstyle{definition}
\newtheorem{definition}[theorem]{\bf Definition}
\newtheorem*{definition*}{\bf Definition}
\theoremstyle{remark}
\newtheorem*{remark*}{\bf Remark}
\newtheorem{remark}[theorem]{\bf Remark}
\newcommand{\eps}{\varepsilon}
\newcommand{\alignx}[1]{\makebox[2.2cm][r]{$#1$}\:{\!$x$}} 
\newcommand\smallO{ 
  \mathchoice
    {{\scriptstyle\mathcal{O}}}
    {{\scriptstyle\mathcal{O}}}
    {{\scriptscriptstyle\mathcal{O}}}
    {\scalebox{.7}{$\scriptscriptstyle\mathcal{O}$}}
  }
\begin{document}

\title[Schr\"odinger operators with integrable complex potentials]{Schr\"odinger operators on the half-line with integrable complex potentials}

\author{Alexander Pushnitski}
\address{Department of Mathematics, King's College London, Strand, London, WC2R~2LS, U.K.}
\email{alexander.pushnitski@kcl.ac.uk}

\author{Franti\v{s}ek \v{S}tampach}
\address{Department of Mathematics, Faculty of Nuclear Sciences and Physical Engineering, Czech Technical University in Prague, Trojanova 13, 12000 Prague~2, Czech Republic.}
\email{frantisek.stampach@cvut.cz}

\dedicatory{To Barry Simon on his 80th birthday, with admiration for his work}

\date{\today}

\begin{abstract}
In our previous work, we introduced the concept of a \emph{spectral pair} for a half-line Schrödinger operator with a \emph{complex} bounded potential $q$, serving as a substitute for the spectral measure in a non-self-adjoint setting. In this paper, we study the case of $q \in L^1(\mathbb{R}_+)$. We derive explicit formulas for the spectral pair in terms of the Jost solutions of a system of two equations naturally associated with the non-self-adjoint Schrödinger operator. A key component of our work, which is of independent interest, is the existence proof and analysis of these Jost solutions.
\end{abstract}

\maketitle

\setcounter{tocdepth}{1}
\tableofcontents

\section{Introduction} 

\subsection{The regular solutions and the $m$-function}
We start by recalling the definition of the $m$-function of the half-line Schr\"odinger operator in the notation adapted to our purposes. We denote $\bbR_+=(0,\infty)$. Let $q$ be a \emph{real-valued} locally integrable  function on $\bbR_+$, and let $H$ be the self-adjoint Schr{\"o}dinger operator
\begin{equation}
Hf=-f''+qf
\label{eq:def_H}
\end{equation}
on $L^2(\bbR_+)$ with the boundary condition 
\begin{equation}
f'(0)+\alpha f(0)=0
\label{bc0}
\end{equation}
at the origin, where $\alpha\in\mathbb{R}\cup\{\infty\}$. The case $\alpha=\infty$ corresponds to the Dirichlet boundary condition $f(0)=0$. We assume further that $q$ is limit-point at infinity, see e.g. \cite[Section X.1]{RS2}. 

In order to make the formulas below more readable, we introduce the \emph{boundary functionals}
\begin{align}
\ell_\alpha(f)&=\frac{\alpha f'(0)-f(0)}{\sqrt{1+\alpha^2}},
\quad
\ell_\alpha^\perp(f)=\frac{f'(0)+\alpha f(0)}{\sqrt{1+\alpha^2}},
\quad &&\text{ if $\alpha\not=\infty$},
\label{eq:sa6}
\\
\ell_\alpha(f)&=f'(0), \hskip2.05cm 
\ell_\alpha^\perp(f)=f(0),
&&\text{ if $\alpha=\infty$}.
\label{eq:sa7}
\end{align}
With this notation, the boundary condition \eqref{bc0} can be written as $\ell_\alpha^\perp(f)=0$.

Let $\varphi,\theta$ be the solutions of the eigenvalue equation 
\begin{equation}
-f''(x)+q(x)f(x)=\lambda f(x), \quad x>0,
\label{eq:eveq}
\end{equation}
satisfying the boundary conditions
\[
\left\{
\begin{aligned}
\ell_\alpha(\varphi)&=-1,\\
\ell_\alpha^\perp(\varphi)&=0,
\end{aligned}\right.
\qquad
\left\{
\begin{aligned}
\ell_\alpha(\theta)&=0,\\
\ell_\alpha^\perp(\theta)&=1.
\end{aligned}\right.
\]
Recall that, for every non-real $\lambda$, the Titchmarsh--Weyl $m$-function is defined as the unique complex number $m_{\alpha}(\lambda)$ such that the solution
\begin{equation}
\chi=\theta-\varphi m_{\alpha}(\lambda)\in L^2(\bbR_+).
\label{X7}
\end{equation}

\subsection{The spectral measure and the Borg--Marchenko uniqueness theorem}

The $m$-function is a Herglotz--Nevanlinna function in the upper half-plane, i.e., it is analytic in the half-plane $\Im \lambda>0$ and satisfies $\Im m_\alpha (\lambda)>0$ therein. As a consequence, it can be represented as
\begin{equation}
m_\alpha(\lambda)=
\Re m_\alpha(\ii)+\int_{-\infty}^\infty \left(\frac{1}{t-\lambda}-\frac{t}{1+t^2}\right)\dd\sigma(t), 
\quad \Im \lambda>0,
\label{eq.r4}
\end{equation}
where $\sigma$ is a unique positive measure called the \emph{spectral measure} of $H$.  The term ``spectral measure'' is justified by the fact that $H$ is unitarily equivalent to the operator of multiplication by the independent variable in $L^2_\sigma(\bbR)$, i.e. the space of square-integrable functions on $\bbR$  with respect to the measure $\sigma$. In particular, the support of $\sigma$ coincides with the spectrum of $H$, and the point masses of $\sigma$ coincide with the eigenvalues of $H$. 

The classical Borg--Marchenko uniqueness theorem \cite{Borg,M} says that the \emph{spectral map} 
\[
(q,\alpha)\mapsto \sigma
\]
is injective, i.e. the spectral measure $\sigma$ uniquely determines both the potential $q$ and the boundary parameter $\alpha$. More recent works with alternative proofs and further developments include~\cite{GS1,GS2,S1} and~\cite{B}.

\subsection{Integrable potentials}
Now let us assume, in addition, that $q\in L^1(\bbR_+)$. Then much more can be said about the spectral measure $\sigma$: it is absolutely continuous on $[0,\infty)$ and pure point on $(-\infty,0)$ with $0$ being the only possible point of accumulation of point masses. Moreover, $\sigma$ can be expressed in terms of the \emph{Jost function}. Below we recall these well-known results, which are usually discussed in the framework of scattering theory of Schr{\" o}dinger operators on $\bbR_{+}$, see e.g. \cite[\S~XI.8]{RS3} and \cite[Chap.~4]{Yaf}.

For any $k\in\bbC\setminus\{0\}$ with $\Im k\geq0$ there exists a \emph{Jost solution} which we will denote by $f_{+\ii}(x,k)$; this is the  unique solution of the eigenvalue equation \eqref{eq:eveq} with $\lambda=k^2$ satisfying the asymptotics
\begin{equation}
f_{+\ii}(x,k)=\ee^{\ii kx}+\smallO(\ee^{\ii kx}), \quad x\to\infty
\label{eq:sa4}
\end{equation}
(the subscript $+\ii$ refers to the factor in front of $kx$ in the exponential). 
The Wronskian
\begin{equation}
f_{+\ii}'(\cdot,k)\varphi(\cdot,k^2)-f_{+\ii}(\cdot,k)\varphi'(\cdot,k^2)=\ell_\alpha^\perp(f_{+\ii}(\cdot,k))
\label{eq:sa4a}
\end{equation}
is known as the \emph{Jost function}. 
With this notation, the spectral measure $\sigma$ can be described as follows. 

\emph{On the positive half-line:} the measure $\sigma$ is absolutely continuous with respect to the Lebesgue measure, with the density
\begin{equation}
\frac{\dd\sigma}{\dd\lambda}(\lambda)=\frac{k}{\pi\abs{\ell_\alpha^\perp(f_{+\ii}(\cdot,k))}^2}, 
\quad \text{ where }
k=\sqrt{\lambda}>0.
\label{eq:sa2}
\end{equation}
The Jost function in the denominator does not vanish for $k>0$. 

\emph{On the negative half-line:}
for $\lambda<0$ let us denote $\kappa=\sqrt{\abs{\lambda}}$ and $f_{-1}(x,\kappa)=f_{+\ii}(x,\ii\kappa)$. Observe that \eqref{eq:sa4} can be rewritten as 
\begin{equation}
f_{-1}(x,\kappa)=\ee^{-\kappa x}+\smallO(\ee^{-\kappa x}), \quad x\to\infty.
\label{eq:sa5}
\end{equation}
One finds that $\lambda<0$ is an eigenvalue of $H$ if and only if $\ell_\alpha^\perp(f_{-1}(\cdot,\kappa))=0$ and in this case we have
\begin{equation}
\sigma(\{\lambda\})=\frac{\abs{\ell_\alpha(f_{-1}(\cdot,\kappa))}^2}{\norm{f_{-1}(\cdot,\kappa)}_{L^2}^2},
\quad 
\text{ where }
\kappa=\sqrt{\abs{\lambda}}>0.
\label{eq:sa1}
\end{equation}
\begin{remark*}
In fact, formula \eqref{eq:sa1} does not really require the potential $q$ to be integrable. It holds for any eigenvalue of $H$ with $f_{-1}$ replaced by any $L^2$-solution of the eigenvalue equation \eqref{eq:eveq}.
\end{remark*}

\subsection{Non-self-adjoint case: overview}
The main object of this paper is the Schr{\"o}dinger operator $H$ on $L^2(\bbR_+)$ with a \emph{complex-valued} potential $q$ and a \emph{non-self-adjoint} boundary condition 
\begin{equation}
f'(0)+\alpha f(0)=0,
\label{eq:bc0}
\end{equation}
where $\alpha\in\bbC\cup\{\infty\}$. 
In order to simplify this preliminary discussion, let us first assume that the potential $q$ is \emph{bounded} (we will lift this assumption later). Then the Schr{\"o}dinger operator $H$ can be defined by formula \eqref{eq:def_H} on the domain 
\[
\Dom H=\{f\in W^{2,2}([0,\infty)): \text{$f$ satisfies \eqref{eq:bc0}} \}.
\]
With this definition, $H$ is densely defined and closed in $L^2(\bbR_+)$. Since both $q$ and $\alpha$ can be complex-valued, the operator $H$ is in general non-self-adjoint.

In  \cite{PS3}, the present authors introduced the concept of a \emph{spectral pair} for $H$ (see Definition~\ref{def:sp-pair} below), which should be viewed as a substitute for the spectral measure $\sigma$. Our spectral pair is $(\nu,\psi)$, where $\nu$ is a measure on the positive half-line and $\psi$ is a complex-valued function on the support of $\nu$ such that $\abs{\psi(\lambda)}\leq 1$ for $\nu$-a.e. $\lambda>0$. In particular, if $H$ is self-adjoint and positive semi-definite, then $\psi\equiv1$ and $\nu$ reduces to the standard spectral measure $\sigma$ of $H$ (up to the normalisation factor of $1/2$).

In \cite{PS3}, a Borg--Marchenko-type uniqueness theorem was established, asserting that the map 
\[
(q,\alpha)\mapsto (\nu,\psi)
\]
is an injection, i.e. both $q$ and $\alpha$ are uniquely determined by the spectral pair~$(\nu,\psi)$.

\medskip

\emph{
The main aim of this paper is to study the operator $H$ under the condition $q\in L^1(\bbR_+)$, with both $q$ and $\alpha$ complex-valued, and to derive analogues of formulas \eqref{eq:sa2} and \eqref{eq:sa1}  for the spectral pair $(\nu,\psi)$.
}

\medskip

Before we proceed, we need to address two minor technical points associated with unbounded $q$. The first one is that the definition of the operator $H$ under the constraint $q\in L^1(\bbR_+)$, without assuming that $q$ is bounded, is not completely trivial. In Appendix~\ref{sec:B}, we explain that $H$ is a closed operator with the domain 
\begin{align}
\Dom H=\{f\in L^2(\bbR_+): &\;\text{$f$ and $f'$ are absolutely continuous on $\bbR_+$, }
\notag
\\
&\text{$-f''+qf\in L^2(\bbR_+)$ and \eqref{eq:bc0} holds}\}\ .
\label{eq:domainH}
\end{align}
The fact that this domain is dense in $L^2(\bbR_+)$ is not obvious (because it does not necessarily contain all smooth compactly supported functions) but true, see Appendix~\ref{sec:B}.

The second technical point is that, strictly speaking, the framework of \cite{PS3} is not applicable to $q\in L^1(\bbR_+)$ because in \cite{PS3} it was assumed that $q$ is bounded. In Appendix~\ref{sec:C}, we explain how to extend the construction of \cite{PS3} to the case of unbounded $q\in L^1(\bbR_+)$; there are no significant difficulties here. 

With these technical points out of the way, we can informally explain the nature of our main results. Similarly to the classical self-adjoint theory, condition $q\in L^1(\bbR_+)$ ensures the existence of Jost-type solutions of the eigenvalue equation for $H$ with prescribed asymptotics at infinity, analogous to \eqref{eq:sa4} and \eqref{eq:sa5}. More precisely, it turns out that instead of a single eigenvalue equation \eqref{eq:eveq}, we are forced to consider a \emph{system} of equations
\begin{equation}
\left\{
\begin{aligned}
-f_1''+\overline{q}f_1&=k^2f_2,
\\
-f_2''+qf_2&=k^2f_1.
\end{aligned}\right.
\label{eq:ev-eq2}
\end{equation}
Note the complex conjugation over $q$ in the first equation! We prove that for any non-zero $k\in\bbC$, any solution $f_1$, $f_2$ to this system has asymptotics involving exponentials $\ee^{\pm kx}$ and $\ee^{\pm\ii kx}$ as $x\to\infty$. Furthermore, we identify solutions with prescribed asymptotics at infinity, see Theorem~\ref{thm.e1} below. By a slight abuse of terminology, we will call them \emph{Jost solutions}. The proof of Theorem~\ref{thm.e1}  (in Section~\ref{sec.e}) turns out to be more subtle than in the classical self-adjoint case because of the coexistence of the ``hyperbolic'' $\ee^{\pm kx}$ and ``elliptic'' $\ee^{\pm\ii kx}$ modes for the same spectral parameter $k^2$. At the technical level, this is reflected in the fact that the corresponding integral equations are in general of Fredholm type but not of Volterra type. We regard the asymptotic analysis of the system \eqref{eq:ev-eq2} as our first main result, which is of independent interest. 

Our second main result is Theorem~\ref{thm.5.3}, where we express both $\nu$ and $\psi$ in terms of the Jost solutions in the spirit of \eqref{eq:sa2} and \eqref{eq:sa1}. 

In order to describe our third main result, we come back for a moment to the classical self-adjoint case. We recall that for $k>0$, the Jost solution $f_{+\ii}$ and its complex conjugate form a basis of the two-dimensional space of solutions of \eqref{eq:eveq}, and so one can expand the regular solution $\varphi$ as a linear combination of these, 
\[
\varphi(x,k^2)=\gamma_{+\ii}(k) f_{+\ii}(x,k)+\gamma_{-\ii}(k)\overline{f_{+\ii}(x,k)}.
\]
The coefficients $\gamma_{+\ii}(k)$ and $\gamma_{-\ii}(k)$ are easy to compute as 
\[
\overline{\gamma_{+\ii}(k)}=\gamma_{-\ii}(k)=\ell_\alpha^\perp(f_{+\ii})/(2\ii k).
\]
This gives a formula alternative to \eqref{eq:sa2},
\begin{equation}
\frac{\dd\sigma}{\dd\lambda}(\lambda)
=\frac1{4\pi k\abs{\gamma_{+\ii}(k)}^2},
\quad 
k=\sqrt{\lambda}>0,
\label{X9}
\end{equation}
for the spectral measure on the positive half-line.

Our third main result is Theorem~\ref{thm:a4}, where we give an alternative expression for $\nu$ and $\psi$ in the spirit of \eqref{X9}. Here our considerations have much in common with scattering theory. 

Lastly, in order to inform the reader's intuition on the spectral pair, we compute formulas for $\nu$ and $\psi$ in the Born approximation, i.e. the linearised forms of $\nu$ and $\psi$ for small $q$. 

\subsection{The structure of the paper}
In Section~\ref{sec:mr-a}, we explain our matrix formalism (that we call \emph{hermitisation framework}) for analysing the system \eqref{eq:ev-eq2} and define the spectral pair $(\nu,\psi)$. 
In Section~\ref{sec:mr-b}, we introduce the Jost solutions of \eqref{eq:ev-eq2} and state our main Theorems~\ref{thm.5.3} and \ref{thm:a4}. Jost solutions are studied in Sections~\ref{sec.e} and~\ref{sec:cont}. Theorems~\ref{thm.5.3} and \ref{thm:a4} are proved in Sections~\ref{sec.ee} and \ref{sec:wr}. In Section~\ref{sec:Born}, we compute $\nu$ and $\psi$ in the Born approximation. Appendices~\ref{sec:B} and~\ref{sec:C} with supplementary results are included at the end of the paper.

\subsection{Matrix and vector notation}
If $a$ and $b$ are two vectors in $\bbC^2$, we will denote by $\{a,b\}$ the $2\times2$ matrix with the columns $a$ and $b$, i.e.
\[
\{a,b\}:=\begin{pmatrix}a_{1}&b_{1}\\a_{2}&b_{2}\end{pmatrix}, 
\quad\text{where }
a=\begin{pmatrix}a_{1}\\a_{2}\end{pmatrix}
\text{ and }\;
b=\begin{pmatrix}b_{1}\\b_{2}\end{pmatrix}.
\]
Next, we denote 
\begin{equation}
e_1=\begin{pmatrix}1\\0\end{pmatrix}, 
\quad
e_2=\begin{pmatrix}0\\1\end{pmatrix}
\quad
\mbox{ and }
\quad
e_{+}=\begin{pmatrix}1\\ 1\end{pmatrix},
\quad
e_{-}=\begin{pmatrix}1\\-1\end{pmatrix}.
\label{eq:a4aa}
\end{equation}
Vectors $e_\pm$ are the eigenvectors of the Pauli matrix 
\[
 \epsilon = \begin{pmatrix} 0 & 1 \\ 1 & 0 \end{pmatrix},
\]
which enters the definition of the hermitisation of $H$ (see~\eqref{eq:a2} below) and appears frequently in the paper.
We have $\epsilon e_{\pm}=\pm e_+$. We denote by $P_+$ and $P_-$ the orthogonal projections onto the vectors $e_+$ and $e_-$ in $\bbC^2$, i.e.
\begin{equation}
P_{+}=\frac12
\begin{pmatrix}
1&1\\ 1&1
\end{pmatrix}
\quad\text{ and }\quad
P_{-}=\frac12
\begin{pmatrix}
1&-1\\ -1&1
\end{pmatrix}.
\label{eq:proj}
\end{equation}

\section{The hermitisation framework}\label{sec:mr-a}

\subsection{Definition of the Schr\"odinger operator $H$}
For complex-valued $q\in L^1(\bbR_+)$ and $\alpha\in\bbC\cup\{\infty\}$, we define the Schr\"odinger operator $H=H(q,\alpha)$ in $L^2(\bbR_+)$ by 
\[
Hf=-f''+qf
\]
on the domain \eqref{eq:domainH}. 

\begin{proposition}\label{prp:B1}
For any $q\in L^1(\bbR_+)$ and $\alpha\in\bbC\cup\{\infty\}$, the operator $H(q,\alpha)$ is densely defined and closed. 
The adjoint satisfies $H(q,\alpha)^*=H(\overline{q},\overline{\alpha})$. 
\end{proposition}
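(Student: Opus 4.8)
The plan is to realise $H=H(q,\alpha)$ as a one-dimensional restriction of a maximal operator and then to produce a bounded two-sided inverse by an explicit integral-kernel construction; the resolvent so obtained delivers all three assertions at once.

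\emph{Maximal operator and closedness.} First I would introduce the maximal operator $H_{\max}(q)$ acting by $\tau f=-f''+qf$ on $D_{\max}(q)=\{f\in L^2(\bbR_+):f,f'\in\AC_{\mathrm{loc}}(\bbR_+),\ \tau f\in L^2(\bbR_+)\}$. Since $q\in L^1(\bbR_+)\subset L^1_{\mathrm{loc}}$, the classical theory of Sturm--Liouville expressions with integrable coefficients applies: the limits $f(0)=\lim_{x\to0+}f(x)$ and $f'(0)=\lim_{x\to0+}f'(x)$ exist for every $f\in D_{\max}(q)$, the functionals $f\mapsto f(0),f'(0)$ are continuous on $D_{\max}(q)$ in the graph norm, and $H_{\max}(q)$ is closed. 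By \eqref{eq:domainH}, $\Dom H$ is the kernel of the graph-continuous functional $\ell_\alpha^\perp$ (see \eqref{eq:sa6}--\eqref{eq:sa7}) inside $D_{\max}(q)$, hence a closed subspace of the complete graph-normed space $D_{\max}(q)$; therefore $H$ is closed. The Lagrange identity
\[
\langle\tau f,g\rangle-\langle f,\tau^+g\rangle=\lim_{x\to\infty}\bigl(f\bar g'-f'\bar g\bigr)(x)-\bigl(f\bar g'-f'\bar g\bigr)(0),\qquad \tau^+g=-g''+\bar q\,g,
\]
is the main computational tool. Because $q\in L^1$ puts the endpoint at infinity in the limit-point case for both $\tau$ and $\tau^+$ (i.e.\ for each admissible $\lambda$ the solutions that are square-integrable near $\infty$ form a one-dimensional space), the boundary term at infinity vanishes whenever $f\in D_{\max}(q)$ and $g\in D_{\max}(\bar q)$. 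Evaluating the remaining term at $0$ then shows at once that $H(\bar q,\bar\alpha)\subseteq H(q,\alpha)^*$, once density of $\Dom H$ is known.

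\emph{Resolvent construction.} The crux, flagged already in the introduction, is that $q$ need not lie in $L^2_{\mathrm{loc}}$, so $C_c^\infty(\bbR_+)\not\subset\Dom H$ and density is not automatic. I would settle this by exhibiting $\lambda\in\rho(H)$ explicitly. For $\lambda=-\mu^2$ with $\mu>0$ large, build the Green's function
\[
G_\lambda(x,y)=W^{-1}\,\varphi(\min(x,y))\,\psi(\max(x,y)),
\]
where $\varphi=\varphi(\cdot,\lambda)$ is the regular solution satisfying the boundary condition \eqref{eq:bc0} at $0$, $\psi=\psi(\cdot,\lambda)$ is the $L^2$ solution at infinity furnished by the limit-point property (unique up to scalars), and $W=\varphi\psi'-\varphi'\psi$ is their Wronskian, nonzero for $\mu$ large. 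Existence of $\psi$ and $W\neq0$ are the classical $L^1$ Jost-solution facts, the latter being the non-vanishing of the Jost function for $\lambda\ll0$. The associated integral operator $R_\lambda$ is bounded on $L^2$; this is cleanest via the Birman--Schwinger factorisation, since $|q|^{1/2}\in L^2$ makes the symmetrised kernel $|q(x)|^{1/2}G^{0}_\lambda(x,y)|q(y)|^{1/2}$ Hilbert--Schmidt with norm tending to $0$ as $\mu\to\infty$. A direct check then gives $\Ran R_\lambda=\Dom H$, $(H-\lambda)R_\lambda=I$ and $R_\lambda(H-\lambda)=I|_{\Dom H}$, where limit-point uniqueness of the $L^2$ solution at infinity rules out stray homogeneous solutions. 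Thus $R_\lambda=(H-\lambda)^{-1}$ is a bounded, injective two-sided inverse.

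\emph{Density and the adjoint.} Because $G_\lambda$ is symmetric, $G_\lambda(x,y)=G_\lambda(y,x)$, the adjoint $R_\lambda^*$ is the integral operator with kernel $\overline{G_\lambda(x,y)}$. Conjugating the defining data shows $\overline\varphi$ and $\overline\psi$ are proportional to the regular and the $L^2$ solutions for the problem $(\bar q,\bar\alpha)$ at the point $\bar\lambda$; hence $\overline{G_\lambda}$ is exactly the Green's function of $H(\bar q,\bar\alpha)-\bar\lambda$, so that $R_\lambda^*=(H(\bar q,\bar\alpha)-\bar\lambda)^{-1}$. As an inverse, $R_\lambda^*$ is injective, whence $\Ran R_\lambda=\Dom H$ is dense (and the same argument applied to $H(\bar q,\bar\alpha)$ gives density of its domain). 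Finally $(H(q,\alpha)^*-\bar\lambda)^{-1}=R_\lambda^*=(H(\bar q,\bar\alpha)-\bar\lambda)^{-1}$ forces $H(q,\alpha)^*=H(\bar q,\bar\alpha)$.

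I expect the main obstacle to be the density statement, equivalently the honest construction of the bounded resolvent when $q\notin L^2_{\mathrm{loc}}$: one must verify that $R_\lambda$ genuinely maps $L^2$ into the quasi-differential domain \eqref{eq:domainH}, so that $\tau R_\lambda g\in L^2$ even though neither $(R_\lambda g)''$ nor $q\,R_\lambda g$ need separately be, and that its range is exactly $\Dom H$. The supporting analytic inputs, namely existence and graph-norm continuity of the boundary values, the limit-point character of the endpoint at infinity for complex $q\in L^1$ (yielding both the vanishing Wronskian boundary term and the unique $L^2$ solution $\psi$), and the non-vanishing of $W$ for $\lambda\ll0$, are the points requiring care; once in hand, the identity $R_\lambda^*=(H(\bar q,\bar\alpha)-\bar\lambda)^{-1}$ yields closedness, density, and the adjoint formula in a single stroke.
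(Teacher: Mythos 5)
Your proposal is correct in substance, but it takes a genuinely different route from the paper. The paper's proof is purely operator-theoretic: it imports from Derezi\'nski--Georgescu the minimal/maximal operator machinery, in particular the duality $H_{\min}(q)^*=H_{\max}(\overline{q})$ and the domain description of $H_{\min}$; density of $\Dom H(q,\alpha)$ then follows from $H_{\min}(q)\subset H(q,\alpha)$, the inclusion $H(\overline{q},\overline{\alpha})\subset H(q,\alpha)^*$ from Green's identity, the reverse inclusion from $H(q,\alpha)^*\subset H_{\min}(q)^*=H_{\max}(\overline{q})$ together with a test-function lemma (compactly supported elements of $D_{\max}(q)$ with prescribed Cauchy data at $0$) that forces the boundary condition $\ell_{\overline{\alpha}}^\perp(g)=0$, and closedness comes for free by rewriting the adjoint identity as $H(q,\alpha)=H(\overline{q},\overline{\alpha})^*$. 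You instead build the resolvent at $\lambda=-\mu^2$, $\mu$ large, from the regular solution and the scalar Jost solution (whose existence for complex $q\in L^1$ via the Volterra iteration is indeed classical), prove boundedness by the Birman--Schwinger/Hilbert--Schmidt estimate using $|q|^{1/2}\in L^2$, and read off closedness, density and the adjoint formula from the symmetry of the Green's function and the identity $R_\lambda^*=(H(\overline{q},\overline{\alpha})-\lambda)^{-1}$; this is self-contained once the (tedious but standard) verification that $R_\lambda$ maps $L^2$ into the domain \eqref{eq:domainH} is done, and it buys something the abstract route does not immediately give, namely that the resolvent set of $H(q,\alpha)$ is nonempty. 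One caution: in your first paragraph you deduce the vanishing of the Lagrange bracket at infinity for all $f\in D_{\max}(q)$, $g\in D_{\max}(\overline{q})$ from the one-dimensionality of the $L^2$ solution spaces; in the non-self-adjoint setting that implication is not automatic (it is exactly the content of \cite[Proposition~5.15]{DG}, and the paper treats the vanishing itself as the substitute for the limit-point condition). Fortunately your resolvent argument never uses this fact --- both inclusions for the adjoint come out of the kernel identity --- so this loose step does not affect the validity of the proof.
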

We indicate the proof of Proposition~\ref{prp:B1} and discuss the relevant literature in Appendix~\ref{sec:B}. 

\begin{remark*}
In principle, one can define $H$ with any complex $q\in L^1_{\mathrm{loc}}(\bbR_+)$. However, this necessitates the discussion of non-self-adjoint analogues of the limit-point/limit-circle dichotomy, which is quite delicate; we refer the interested reader to \cite{DG}. If $q\in L^1(\bbR_+)$, we are in the limit-point case, although this terminology is not standard in the non-self-adjoint setting.
\end{remark*}

In analogy with \eqref{eq:sa6} and \eqref{eq:sa7}, we define the boundary functionals in the non-self-adjoint case by 
\begin{align}
\ell_\alpha(f)&=\frac{\overline{\alpha} f'(0)-f(0)}{\sqrt{1+\abs{\alpha}^2}},
\quad
\ell_\alpha^\perp(f)=\frac{f'(0)+\alpha f(0)}{\sqrt{1+\abs{\alpha}^2}},
\quad &&\text{ if $\alpha\not=\infty$},
\label{eq:nsa6}
\\
\ell_\alpha(f)&=f'(0), \hskip2.05cm 
\ell_\alpha^\perp(f)=f(0),
&&\text{ if $\alpha=\infty$}.
\label{eq:nsa7}
\end{align}
The difference from \eqref{eq:sa6} is in the complex conjugation over $\alpha$ in the first formula of \eqref{eq:nsa6}. In this notation, the boundary condition \eqref{eq:bc0} becomes $\ell_\alpha^\perp(f)=0$. Below we will use a convenient expression \eqref{eq:wr4} for Wronskians in terms of these boundary operators.

\subsection{The hermitised operator $\bH$}
Our approach is to access the spectral properties of the non-self-adjoint operator $H=H(q,\alpha)$ by using the language of self-adjoint spectral theory. To this end, we consider the \emph{hermitisation} of $H$, i.e. the \emph{self-adjoint} block-matrix operator
\[
\bH=
\begin{pmatrix}
0& H\\ H^*& 0
\end{pmatrix}
\quad\text{ in } L^2(\bbR_+)\oplus L^2(\bbR_+)
\]
on the domain $\Dom H^*\oplus\Dom H$; the adjoint $H^*$ is described in Proposition~\ref{prp:B1}.

Identifying $L^2(\bbR_+)\oplus L^2(\bbR_+)$ with the space of $\bbC^2$-valued functions $L^2(\bbR_+;\bbC^2)$, we can view $\bH$ as the operator
\begin{equation}
\quad 
\bH=-\epsilon\frac{\dd^2}{\dd x^2}+Q 
\quad\mbox{ with }
\epsilon=
\begin{pmatrix}
0&1\\1&0
\end{pmatrix}
\text{ and }\,
Q=\begin{pmatrix}0&q\\ \overline{q}&0\end{pmatrix},
\label{eq:a2}
\end{equation}
acting in $L^2(\bbR_+;\bbC^2)$. 
The operator $\bH$ is supplied with the boundary condition 
\begin{equation}
F'(0)+AF(0)=0,
\quad\text{ where }
A=\begin{pmatrix}
\overline{\alpha} & 0 \\ 0 & \alpha
\end{pmatrix}.
\label{eq:a2a0}
\end{equation}
If $\alpha=\infty$, \eqref{eq:a2a0} is to be interpreted as the Dirichlet boundary condition $F(0)=0$. In analogy with \eqref{eq:nsa6} and \eqref{eq:nsa7}, it is convenient to define the boundary operators $L_\alpha$, $L_\alpha^\perp$ for $\bbC^2$-valued vector-functions by 
\begin{align}
L_\alpha(F)=
\begin{pmatrix}
\ell_{\overline{\alpha}}(F_1)\\ \ell_\alpha(F_2)
\end{pmatrix}
\quad\text{ and }\quad
L_\alpha^\perp(F)=
\begin{pmatrix}
\ell_{\overline{\alpha}}^\perp(F_1)\\ \ell_\alpha^\perp(F_2)
\end{pmatrix},
\quad\text{ where }
F=\begin{pmatrix}
F_1\\ F_2
\end{pmatrix}.
\label{eq:e30c}
\end{align}
Alternatively, we can write
\begin{align}
L_\alpha(F)&=\frac{\overline{A} F'(0)-F(0)}{\sqrt{1+\abs{\alpha}^2}},
\quad   
L_\alpha^\perp(F)=\frac{F'(0)+A F(0)}{\sqrt{1+\abs{\alpha}^2}},
&& \text{ if $\alpha\not=\infty$},
\label{eq:nsa6v}
\\
L_\alpha(F)&=F'(0), \hskip2.2cm 
L_\alpha^\perp(F)=F(0), 
&&\text{ if $\alpha=\infty$}.
\label{eq:nsa7v}
\end{align}
With this notation, the boundary condition \eqref{eq:a2a0} becomes $L_\alpha^\perp(F)=0$ and $\bH$ is a~self-adjoint operator in $L^2(\bbR_+;\bbC^2)$ with the domain 
\begin{align*}
\Dom\bH=\{F\in L^2(\bbR_+;\bbC^2): &\text{ $F$ and $F'$ are absolutely continuous on $\bbR_+$, }
\\
&-\epsilon F''+QF\in L^2(\bbR_+;\bbC^2) \text{ and } L_\alpha^\perp(F)=0\,\}.
\end{align*}
It is important to note that by a general operator theoretic argument, $\bH$ is unitarily equivalent to the operator
\begin{equation}
\begin{pmatrix}
\sqrt{H^*H}&0\\ 0&-\sqrt{HH^*}
\end{pmatrix}\, .
\label{eq:e30b}
\end{equation}
In particular, $-\bH$ is unitarily equivalent to $\bH$, so the spectrum of $\bH$ is symmetric with respect to the reflection $\lambda\mapsto-\lambda$.

\subsection{The Titchmarsh--Weyl $M$-function}
Let us denote by $\Phi,\Theta$ the $2\times2$ matrix-valued solutions (the fundamental system of regular solutions) of the eigenvalue equation 
\begin{equation}
-\epsilon F''(x,\lambda)+Q(x)F(x,\lambda)=\lambda F(x,\lambda), \quad x>0,
\label{eq:init_diff_eq}
\end{equation}
with the Cauchy data at $x=0$:
\begin{equation}
\left\{
\begin{aligned}
L_\alpha(\Phi)&=-I,
\\
L_\alpha^\perp(\Phi)&=0,
\end{aligned}
\right.
\qquad
\left\{
\begin{aligned}
L_\alpha(\Theta)&=0,
\\
L_\alpha^\perp(\Theta)&=\epsilon,
\end{aligned}
\right.
\label{eq:phi,theta_bc_cond}
\end{equation}
where $\lambda\in\bbC$. Here the boundary operators $L_\alpha$, $L_\alpha^\perp$ are applied to $2\times2$ matrices rather than vectors, and should be understood as \eqref{eq:nsa6v}, \eqref{eq:nsa7v}. Observe that $\Phi$ (but not $\Theta$) satisfies the boundary condition \eqref{eq:a2a0}. 

The following statement is the direct analogue of the definition~\eqref{X7} of the scalar $m$-function. See Appendix~\ref{sec:C} for a short discussion of the proof.

\begin{proposition}\label{prp.Mf}
Let $q\in L^1(\bbR_+)$ and let $Q$ be as in \eqref{eq:a2}. 
For every $\lambda\in\bbC\setminus\bbR$, there exists a unique $2\times 2$ matrix $M_{\alpha}(\lambda)$ such that both columns of the matrix
\begin{equation}
X(x,\lambda)=\Theta(x,\lambda)-\Phi(x,\lambda)M_\alpha(\lambda)
\label{eq:X}
\end{equation}
belong to $L^2(\bbR_+;\bbC^2)$ as functions of $x$. The matrix-valued function $M_\alpha$ is analytic in $\Im\lambda>0$ and satisfies
\begin{equation}
\Im M_\alpha(\lambda)>0 \; \text{ if }\Im \lambda>0.
\label{eq:a4a}
\end{equation}
\end{proposition}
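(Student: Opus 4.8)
The plan is to establish the three assertions of Proposition~\ref{prp.Mf} in turn: existence and uniqueness of $M_\alpha(\lambda)$, analyticity, and the positivity~\eqref{eq:a4a}. The natural strategy is to exploit the unitary equivalence \eqref{eq:e30b}, which shows that $\bH$ is a genuine self-adjoint operator whose $M$-function is a standard matrix-valued Titchmarsh--Weyl function, and then invoke the known theory of matrix Schr\"odinger (or canonical/Dirac-type) operators on the half-line. First I would argue that, since $q \in L^1(\bbR_+)$, the operator $\bH$ is in the limit-point case at infinity (the matrix analogue of the scalar limit-point condition), so that for $\Im\lambda \neq 0$ the space of $L^2$-solutions of \eqref{eq:init_diff_eq} is exactly two-dimensional. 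The columns of $\Phi$ and $\Theta$ together span the four-dimensional solution space, so there is a unique coefficient matrix $M_\alpha(\lambda)$ making both columns of $X = \Theta - \Phi M_\alpha$ lie in $L^2$; uniqueness follows because the columns of $\Phi$ itself cannot be in $L^2$ (as $\Phi$ solves the boundary condition $L_\alpha^\perp(\Phi)=0$, an $L^2$ column of $\Phi$ would be an eigenfunction at a non-real $\lambda$ of a self-adjoint operator, which is impossible).

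Next I would address analyticity. The regular solutions $\Phi(x,\lambda)$ and $\Theta(x,\lambda)$ are entire in $\lambda$ for each fixed $x$ (they are defined by Cauchy data at $x=0$ and solve a linear ODE with $\lambda$-entire coefficients), and this analyticity is locally uniform on compact $x$-intervals. To transfer analyticity to $M_\alpha$, the standard device is to write $M_\alpha(\lambda)$ as a limit of the regularized $m$-matrices $M_b(\lambda)$ obtained by imposing a self-adjoint boundary condition at a finite endpoint $x=b$ and letting $b\to\infty$; each $M_b$ is a ratio of entire matrix functions (hence meromorphic), and the limit-point condition gives locally uniform convergence on $\Im\lambda > 0$, so the limit $M_\alpha$ is analytic there. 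Alternatively one can represent $M_\alpha(\lambda)$ directly through the resolvent of $\bH$ applied to the boundary data and read off analyticity from the analyticity of the resolvent.

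For the positivity~\eqref{eq:a4a}, I would use the Green's-formula / Lagrange-identity computation that is classical for self-adjoint $M$-functions. The key is to evaluate, for a constant vector $v\in\bbC^2$, the quantity $\langle X v, X v\rangle_{L^2}$ using the bilinear form of $\bH$ together with the boundary condition $L_\alpha^\perp(\Phi)=0$ and the normalizations in \eqref{eq:phi,theta_bc_cond}. Integration by parts with the matrix $\epsilon$ converts $\Im\langle \lambda X v, X v\rangle$ into a boundary term at $x=0$ (the term at infinity vanishes because $Xv\in L^2$ and $\bH$ is limit-point), and that boundary term evaluates, after using the Cauchy data, to $\Im\langle M_\alpha(\lambda)v, v\rangle$ up to a positive factor. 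Since $\langle Xv, Xv\rangle > 0$ and $\Im\lambda > 0$, this yields $\Im M_\alpha(\lambda) > 0$.

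I expect the main obstacle to be the careful verification of the limit-point property of the matrix operator $\bH$ and the attendant rigorous handling of the boundary term at infinity in the Lagrange identity. Because $\bH$ mixes the ``elliptic'' and ``hyperbolic'' modes $\ee^{\pm\ii kx}$ and $\ee^{\pm kx}$ (as flagged in the introduction), one must confirm that the $L^2$-solution space has the expected dimension and that no solution decays too slowly to kill the boundary contribution; the cleanest route is to lean on the unitary equivalence \eqref{eq:e30b}, which identifies $\bH$ with a direct sum built from the self-adjoint operators $H^*H$ and $HH^*$, thereby reducing all three claims to the scalar self-adjoint $m$-function theory applied to $\sqrt{H^*H}$ and $\sqrt{HH^*}$.
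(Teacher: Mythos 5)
Your skeleton for existence/uniqueness and for the positivity \eqref{eq:a4a} coincides with the paper's: the paper also decomposes the four-dimensional solution space of \eqref{eq:init_diff_eq} as the direct sum of the two-dimensional subspace $S(\lambda)$ of $L^2$-solutions and the span of the columns of $\Phi$ (using that non-real $\lambda$ cannot be an eigenvalue of the self-adjoint $\bH$), and it also proves \eqref{eq:a4a} by integrating the Lagrange identity $\bigl(X'^*\epsilon X-X^*\epsilon X'\bigr)'=(\lambda-\overline{\lambda})\,X^*X$ over $(0,\infty)$ and evaluating the boundary term at the origin via \eqref{eq:L_X}, \eqref{eq:L_X_perp}. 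The genuine gap is the input you assert but never prove: that $\dim S(\lambda)=2$ for all non-real $\lambda$ (the paper's dimension lemma, Lemma~\ref{lem:dim_S}). Because $\epsilon$ is indefinite, this is \emph{not} a quotable ``matrix limit-point'' fact: the free system carries both hyperbolic modes $\ee^{\pm kx}$ and elliptic modes $\ee^{\pm\ii kx}$ at the same $\lambda=k^2$, and controlling the perturbed solution space for merely integrable $Q$ is exactly what the Fredholm (non-Volterra) analysis of Section~\ref{sec.e} is for. The paper proves the lemma by observing that $\dim S(\lambda)$ equals a deficiency index, hence is constant in each half-plane, and then counting via Theorem~\ref{thm.e1} that for $\lambda$ in the open first quadrant exactly two of the four Jost solutions are in $L^2$.

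The repair you propose for this gap fails. The unitary equivalence \eqref{eq:e30b} is an abstract consequence of the polar decomposition; the unitary realizing it does not intertwine the differential expression, the boundary functionals $L_\alpha$, $L_\alpha^\perp$, or solutions of \eqref{eq:init_diff_eq}, and $\sqrt{H^*H}$, $\sqrt{HH^*}$ are not Sturm--Liouville operators, so there is no scalar ODE $m$-function theory to apply to them. In the paper, \eqref{eq:e30b} serves only to show that the spectrum of $\bH$ is symmetric under $\lambda\mapsto-\lambda$; it yields no information about $S(\lambda)$ or $M_\alpha$. (A correct abstract shortcut does exist: $\bH$ is self-adjoint and its domain exceeds the minimal domain by exactly two dimensions of boundary data, so von Neumann's extension theory forces deficiency indices $(2,2)$ --- but that is not the argument you gave.) Finally, your analyticity route (finite-interval regularization with nested Weyl disks, or a resolvent representation) is genuinely different from the paper's, which deduces analyticity in each of the sectors $0<\arg k<\pi/4$ and $\pi/4<\arg k<\pi/2$, plus continuity up to the common ray, from the Jost-solution formula $M_\alpha(\lambda)=L_\alpha(E(\cdot,k))\bigl(L_\alpha^\perp(E(\cdot,k))\bigr)^{-1}\epsilon$ of Lemma~\ref{lma:e3a} together with Theorem~\ref{thm.e2} and Morera's theorem; your route could be made rigorous through the Hamiltonian-systems (Hinton--Shaw) framework, but that translation is nontrivial and is precisely what the paper chose to avoid.
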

Here and below, we use the notation
\[
 \Re M = \frac{1}{2}(M+M^{*}) \quad\mbox{ and }\quad \Im M = \frac{1}{2\ii}(M-M^{*})
\]
for a square matrix $M$, and $M>0$ means that $M$ is positive definite.

\subsection{The spectral measure $\Sigma$ and the spectral pair $(\nu,\psi)$}

Proposition~\ref{prp.Mf} implies that $M_{\alpha}$ is a matrix-valued Herglotz--Nevanlinna function and therefore, by the general integral representation theorem, see~\cite[Theorems~2.3 and 5.4]{GT}, we have (cf. \eqref{eq.r4})
\begin{equation}
M_\alpha(\lambda)=
\Re M_\alpha(\ii)+\int_{-\infty}^\infty \left(\frac{1}{t-\lambda}-\frac{t}{1+t^2}\right)\dd\Sigma(t), \quad 
\Im\lambda>0,
\label{eq:intres}
\end{equation}
where $\Sigma$ is a unique $2\times 2$ matrix-valued measure on $\bbR$, called the \emph{spectral measure} of $\bH$.
\begin{proposition}\label{prp.sm}
Let $q\in L^1(\bbR_+)$ and let $Q$ be as in \eqref{eq:a2}. 
\begin{enumerate}[\rm (i)]
\item
The operator $\bH$ is unitarily equivalent to the operator of multiplication by the independent variable in the Hilbert space $L_\Sigma^2(\bbR;\bbC^2)$, i.e. the space of $\bbC^2$-valued functions square-integrable with respect to the measure $\Sigma$.
\item
The measure $\Sigma$ has the structure
\begin{equation}
\dd\Sigma=
\begin{pmatrix}
1&\psi
\\
\overline{\psi}&1
\end{pmatrix}\dd\nu,
\label{eq:a1}
\end{equation}
where $\nu$ is an \emph{even} measure on $\bbR$ and $\psi\in L^\infty(\nu)$ is an \emph{odd} complex-valued function satisfying $\abs{\psi(\lambda)}\leq 1$ for $\nu$-a.e. $\lambda\in\bbR$.
\item
The spectral map 
\[
L^1(\bbR_+)\times(\bbC\cup\{\infty\})\ni
(q,\alpha)\mapsto (\nu,\psi)
\]
is an injection, i.e. the pair $(\nu,\psi)$ uniquely determines both $q$ and $\alpha$. 
\end{enumerate}
\end{proposition}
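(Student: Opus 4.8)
The plan is to establish the three assertions in turn, leaning on the unitary-equivalence structure already recorded in \eqref{eq:e30b} and on the known Borg--Marchenko theory for the bounded case from \cite{PS3}. For part (i), the statement is the standard diagonalisation of a self-adjoint half-line differential operator via its Titchmarsh--Weyl matrix and the associated matrix spectral measure $\Sigma$ from \eqref{eq:intres}. Since $\bH$ is self-adjoint (as noted after \eqref{eq:nsa7v}) and is a $2\times2$ matrix Schr\"odinger operator in the limit-point case, the general theory of matrix-valued Herglotz functions and the corresponding spectral representation applies directly. I would invoke \cite[Theorems~2.3 and 5.4]{GT} together with the standard eigenfunction-expansion argument: the map sending $F\in\Dom\bH$ to its generalised Fourier transform against the columns of the regular solution $\Phi$ is an isometry onto $L^2_\Sigma(\bbR;\bbC^2)$ intertwining $\bH$ with multiplication by the variable. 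This is essentially verbatim from the self-adjoint matrix theory and requires no new ideas beyond checking that $\Phi$ is the correct fundamental solution normalised by \eqref{eq:phi,theta_bc_cond}.

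For part (ii), the structure of $\Sigma$ follows from the symmetry $-\bH\cong\bH$ recorded after \eqref{eq:e30b}. First I would show $\Sigma\geq0$ as a matrix measure (immediate from $\Im M_\alpha>0$ and the Herglotz representation). The decomposition \eqref{eq:a1} is then obtained by diagonalising: since $\Sigma\geq0$, the off-diagonal entry is absolutely continuous with respect to the trace measure and bounded by the diagonal entries, so writing $\dd\nu=\tfrac12\Tr\dd\Sigma$ and extracting the normalised off-diagonal density as $\psi$ yields a $2\times2$ nonnegative matrix with unit diagonal, forcing $\abs{\psi}\leq1$ by nonnegativity of the $2\times2$ block. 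The key point is that the two diagonal entries of $\Sigma$ coincide: this is exactly where I would use the symmetry. Because $-\bH$ is unitarily equivalent to $\bH$ via the involution that swaps the two components (the off-diagonal block structure of $\bH$ in \eqref{eq:a2} anticommutes with $\mathrm{diag}(1,-1)$ up to the swap $\epsilon$), the reflection $\lambda\mapsto-\lambda$ together with this component swap maps $\Sigma$ to itself with the diagonal entries interchanged; evaluated on the two diagonal entries this gives their equality and forces $\nu$ to be even, while the odd parity of $\psi$ follows from tracking how the swap conjugates the off-diagonal entry. I would make this precise by computing how the spectral measure transforms under the explicit unitary realising $-\bH\cong\bH$.

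For part (iii), injectivity of $(q,\alpha)\mapsto(\nu,\psi)$, the cleanest route is to reduce to the Borg--Marchenko theorem already proved in \cite{PS3}. The spectral pair $(\nu,\psi)$ determines $\Sigma$ via \eqref{eq:a1}, hence determines the matrix $m$-function $M_\alpha$ through the Herglotz representation \eqref{eq:intres} up to the additive constant $\Re M_\alpha(\ii)$; I would argue that this constant is itself pinned down (for instance via the known high-energy asymptotics of $M_\alpha$, or by the fact that the real part is recovered from the measure in the matrix-Herglotz setting once one accounts for the normalisation in \eqref{eq:phi,theta_bc_cond}). Since \cite{PS3} established that the map $(q,\alpha)\mapsto M_\alpha$ is injective (that is the content of their Borg--Marchenko-type uniqueness result), composing these two injections yields the claim. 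The main obstacle I anticipate is the same as in the bounded case: passing from the measure $\Sigma$ back to $M_\alpha$ requires controlling the free constant in the Herglotz representation, and ensuring that the extension of the \cite{PS3} framework to unbounded $q\in L^1(\bbR_+)$ (deferred to Appendix~\ref{sec:C}) genuinely preserves the injectivity statement. Concretely, I would need to verify that the uniqueness argument of \cite{PS3}, which is stated for bounded $q$, carries over once the $M$-function of Proposition~\ref{prp.Mf} is shown to have the same local Borg--Marchenko asymptotics; this verification, rather than any new spectral-theoretic construction, is where the real work lies.
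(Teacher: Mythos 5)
Your overall architecture for parts (i) and (iii) matches the paper's. For (iii), the paper does exactly what you outline: $\widehat\Sigma=\Sigma$ plus the Herglotz representation \eqref{eq:intres} gives $\widehat M_{\widehat\alpha}-M_\alpha=\mathrm{const}$, and the constant (together with $\widehat\alpha=\alpha$) is pinned down by the large-$\abs{k}$ asymptotics of $M_\alpha$, which the paper proves for $q\in L^1$ via the Jost solution asymptotics (Lemma~\ref{lma.cont2} and Proposition~\ref{prop:M_func_asympt}); the remaining step ($M_\alpha$ determines $Q$) is Volterra-based in \cite{PS3} and needs no boundedness. Your fallback suggestion that the constant ``is recovered from the measure'' alone is false in general, but your primary route is the correct one. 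For (i), beware that ``requires no new ideas'' is not quite right: the intertwining $U\bH F=\Lambda UF$ is proved by integration by parts on a dense set, and for unbounded $q\in L^1$ the domain of $\bH$ need not contain smooth compactly supported functions; the paper resolves this with Lemma~\ref{prp:B1a} (density in graph norm of compactly supported elements of $\Dom H(q,\alpha)$). That is the only genuinely new ingredient in (i), and your proposal skips it.

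The genuine gap is in part (ii), in the mechanism you propose for the equality of the two diagonal entries of $\Sigma$. The unitary involution realising $-\bH\cong\bH$ is $\xi=\mathrm{diag}(1,-1)$: indeed $\xi\epsilon\xi=-\epsilon$, $\xi Q\xi=-Q$ (as $Q$ is off-diagonal) and $\xi A\xi=A$, so $\xi\bH\xi=-\bH$ with the same domain. But conjugation by $\xi$ does \emph{not} swap components: $\xi\begin{pmatrix}a&b\\c&d\end{pmatrix}\xi=\begin{pmatrix}a&-b\\-c&d\end{pmatrix}$. Consequently this symmetry yields $\dd\Sigma(t)=\xi\,\dd\Sigma(-t)\,\xi$, i.e.\ evenness of both diagonal entries and oddness of the off-diagonal entry, but \emph{not} $\Sigma_{11}=\Sigma_{22}$. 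The component swap $\epsilon$, which you invoke for interchanging the diagonal entries, is not a symmetry of the problem in the way you claim: a direct computation gives $\epsilon\bH(q,\alpha)\epsilon=\bH(\overline q,\overline\alpha)$, not $-\bH(q,\alpha)$, so for non-real $q$ or $\alpha$ there is no unitary of the form you describe relating $\bH$ and $-\bH$ through the swap.

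What actually forces $\Sigma_{11}=\Sigma_{22}$ is the \emph{antiunitary} symmetry $C\colon F\mapsto\epsilon\overline F$, which \emph{commutes} with $\bH$: this uses the specific structure \eqref{eq:a2} through the identity $\epsilon\overline Q\epsilon=Q$ (and $\epsilon\overline A\epsilon=A$ for the boundary condition). At the level of the $M$-function it gives $M_\alpha(\lambda)=\epsilon\,\overline{M_\alpha(\overline\lambda)}\,\epsilon$, hence by uniqueness of the Herglotz representation $\dd\Sigma=\epsilon\,\overline{\dd\Sigma}\,\epsilon$, which (using Hermiticity of $\Sigma$) is exactly $\Sigma_{11}=\Sigma_{22}$. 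Combining this with the $\xi$-symmetry above produces the full structure \eqref{eq:a1}: a common even measure $\nu$ on the diagonal and an odd $\psi$ with $\abs\psi\le1$ (the bound coming, as you say, from positivity of the $2\times2$ density). This is precisely what the paper means when it says part (ii) ``follows from the symmetries of the matrix $Q$'' and is why it can simply import \cite[Theorem~1.2]{PS3}, whose proof never uses boundedness of $q$. Your normalisation $\dd\nu=\tfrac12\Tr\dd\Sigma$ and the extraction of $\psi$ are fine once the two symmetries are in place; without the antiunitary one, the argument does not close.
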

Part (i) can be regarded as known and parts (ii) and (iii) are due to \cite{PS3}. We comment on the proof in Appendix~\ref{sec:C}.

 Since $\nu$ is odd and $\psi$ is even, the pair $(\nu,\psi)$ is uniquely determined by its restriction onto $[0,\infty)$. 
\begin{definition}\label{def:sp-pair}
We call $(\nu,\psi)$ of \eqref{eq:a1}  the \emph{spectral pair} of the operator $H$.
\end{definition}

\subsection{Wronskian notation}
We will use a Wronskian definition adapted to the differential equation \eqref{eq:init_diff_eq}. For $\bbC^2$-valued functions $F$ and $G$ on $\bbR_+$ we write
\begin{equation}
[F,G]=\jap{\eps F',G}_{\bbC^2}-\jap{\eps F,G'}_{\bbC^2}.
\label{eq:wr-def}
\end{equation}
Here the Euclidean inner product $\jap{a,b}_{\bbC^2}$ is linear in $a$ and anti-linear in $b$. 
If both $F$ and $G$ are solutions of \eqref{eq:init_diff_eq} \emph{with real $\lambda$}, then $[F,G]$ is independent of $x$. A~calculation shows that the Wronskian of any functions $F$ and $G$ at zero can be computed in terms of our boundary operators $L_\alpha$ and $L_\alpha^\perp$ (see \eqref{eq:e30c}) as follows:
\begin{equation}
[F,G](0)
=\jap{\eps L_\alpha(F),L_\alpha^\perp(G)}
-\jap{\eps L_\alpha^\perp(F),L_\alpha(G)}.
\label{eq:wr4}
\end{equation}

\section{Main results}\label{sec:mr-b}

\subsection{The Jost solutions}
For $k\in\bbC\setminus\{0\}$, we consider $\bbC^2$-valued solutions $F$ to the eigenvalue equation
\begin{equation}
-\epsilon F''+QF=k^2F
\label{e30}
\end{equation}
for the operator $\bH$ on the positive half-line. Of course, this is just \eqref{eq:ev-eq2} written in vector notation. Observe that if $Q=0$, we have four linearly independent solutions 
\[
\ee^{\pm kx}e_-, \quad 
\ee^{\pm \ii kx}e_+
\]
(see \eqref{eq:a4aa} for the definition of $e_{\pm}$); 
it is important to note that $\epsilon e_{\pm}=\pm e_{\pm}$. The following theorem is proved in Section~\ref{sec.e}.

\begin{theorem}\label{thm.e1}
Let $q\in L^1(\bbR_+)$ and $k\in\bbC\setminus\{0\}$. There exist \emph{Jost solutions} $\oldF_{-1}$, $\oldF_{+1}$, $\oldF_{+\ii}$ and $\oldF_{-\ii}$ to the differential equation \eqref{e30} 
with the asymptotics
\begin{align}
\oldF_{\pm1}(x,k)&=\ee^{\pm kx}e_-+\smallO(\ee^{\pm kx}), \label{eq:F-1}
\\
\oldF_{\pm \ii}(x,k)&=\ee^{\pm \ii kx}e_++\smallO(\ee^{\pm \ii kx}),\label{eq:F+i}
\end{align}
as $x\to\infty$. The derivatives with respect to $x$ satisfy the asymptotics
\begin{align}
\oldF_{\pm 1}'(x,k)&=\pm k\ee^{\pm kx}e_-+\smallO(\ee^{\pm kx}), \label{eq:F-1-prime}
\\
\oldF_{\pm \ii}'(x,k)&=\pm \ii k\ee^{\pm \ii kx}e_++\smallO(\ee^{\pm \ii kx}), \label{eq:F+i-prime}
\end{align}
as $x\to\infty$. 
\end{theorem}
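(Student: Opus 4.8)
The plan is to construct each Jost solution as the fixed point of a ``variation of parameters'' integral equation built from the free resolvent of \eqref{e30} with $Q=0$. Using the projections $P_\pm$ of \eqref{eq:proj} and the identity $\epsilon e_\pm=\pm e_\pm$, the free equation decouples into a scalar \emph{elliptic} equation $\phi_+''+k^2\phi_+=0$ on the range of $P_+$ (modes $\ee^{\pm\ii kx}$) and a scalar \emph{hyperbolic} equation $\phi_-''-k^2\phi_-=0$ on the range of $P_-$ (modes $\ee^{\pm kx}$), with $Q$ coupling the two channels. For a target mode $\mu\in\{\pm k,\pm\ii k\}$ and associated eigenvector $w_\mu\in\{e_-,e_+\}$, I would seek the Jost solution in the form
\begin{equation*}
F(x)=\ee^{\mu x}w_\mu+\int \mathbb{G}_\mu(x,y)\,Q(y)F(y)\,\dd y,
\end{equation*}
where $\mathbb{G}_\mu$ is a block-diagonal Green's matrix assembled from the four free modes. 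The natural setting is the weighted space $X_\mu=\{F\in C([x_0,\infty);\bbC^2):\sup_{x\ge x_0}\abs{\ee^{-\mu x}F(x)}<\infty\}$, on which $\ee^{\mu x}w_\mu$ is bounded, for a base point $x_0$ to be chosen.

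The construction of $\mathbb{G}_\mu$ is where the ordering of the four growth rates $\{\pm\Re k,\pm\Im k\}$ enters. Each scalar channel carries two modes, and I would anchor each mode by the rule: a mode whose rate is \emph{strictly smaller} than $\Re\mu$ is anchored at the finite endpoint $x_0$ (its Green's contribution integrates $y$ from $x_0$ to $x$), while a mode whose rate is \emph{greater than or equal} to $\Re\mu$ is anchored at $+\infty$ (integrating $y$ from $x$ to $\infty$), the target itself being supplied separately as the leading term $\ee^{\mu x}w_\mu$. The key estimate is then that, after renormalising by the weight $\ee^{\mu x}$ and inserting the weight $\ee^{\mu y}$ carried by $F\in X_\mu$, each kernel entry is pointwise bounded by $\tfrac{C}{\abs k}\ee^{-\abs{r'-\Re\mu}\abs{x-y}}\le\tfrac{C}{\abs k}$, where $r'$ is the rate of the mode; the tie-breaking rule produces the correct sign of the exponent in every case, including $r'=\Re\mu$. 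Integrating against $\abs Q$ yields $\Norm{\mathcal K_\mu}_{X_\mu\to X_\mu}\le \tfrac{C}{\abs k}\Norm{q}_{L^1([x_0,\infty))}$. Since $q\in L^1(\bbR_+)$, I choose $x_0$ so large that this norm is below $1$; the Banach fixed point theorem then produces a unique solution on $[x_0,\infty)$, which I extend to $(0,x_0]$ by solving the Cauchy problem for \eqref{e30}.

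It remains to upgrade boundedness to the precise asymptotics. Writing $F=\ee^{\mu x}w_\mu+\mathcal K_\mu F$, the contributions anchored at $+\infty$ are controlled by $\Norm{q}_{L^1([x,\infty))}\to0$, and those anchored at $x_0$ tend to zero as $x\to\infty$ by splitting the integral and using $q\in L^1$ together with the strictly negative exponent; this gives $F(x)=\ee^{\mu x}w_\mu+\smallO(\ee^{\mu x})$, i.e.\ \eqref{eq:F-1}--\eqref{eq:F+i}. Differentiating the integral equation and repeating the same estimates yields \eqref{eq:F-1-prime}--\eqref{eq:F+i-prime}. The four Jost solutions correspond to the four choices $\mu\in\{k,-k,\ii k,-\ii k\}$.

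I expect the main obstacle to be exactly the feature highlighted before the theorem. For $k$ off the coordinate and diagonal rays the two channels have four distinct growth rates and the scheme is of Volterra type; but on the rays $\arg k\in\tfrac{\pi}{4}\bbZ$ two rates coincide, the slower/faster dichotomy degenerates, and the integral operator becomes genuinely Fredholm (two-sided) rather than Volterra. The purpose of the weighted-contraction argument is to avoid the Fredholm alternative altogether: the tie-breaking rule keeps every renormalised kernel bounded by $1$ \emph{uniformly across these rays}, and the smallness comes from the integrable tail of $q$ rather than from any triangular structure. A secondary technical point is to check that $\mathbb{G}_\mu$ remains block-diagonal, so that no cross-channel factor $(\mu_i-\mu_j)^{-1}$ appears and the constant degenerates only as $k\to0$, which is excluded.
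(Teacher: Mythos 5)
Your proposal is correct, and its engine is the same as the paper's: convert \eqref{e30} into an integral equation whose Green's matrix is assembled mode-by-mode from the free solutions $\ee^{\pm kx}e_-$, $\ee^{\pm\ii kx}e_+$, with each mode anchored either at a large base point or at $+\infty$ according to the ordering of the growth rates; renormalise by the target exponential; obtain a contraction from the smallness of $\int_{x_0}^\infty\norm{Q(y)}\dd y$; and extract the asymptotics by splitting the integrals (your splitting step is exactly the paper's Lemma~\ref{lma:t4}, and your observation that no cross-channel factor appears is the block-diagonality of the free resolvent with respect to $P_\pm$). In the open sector $0<\arg k<\pi/4$ your anchoring rule reproduces precisely the paper's kernels \eqref{eq:e58}. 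The genuine difference is the treatment of the Stokes rays and of the remaining sectors. The paper fixes one kernel family on the whole closed sector $\overline{S_0}$; the price is that on the boundary rays some of the resulting solutions $\myF_j$ fail to have the stated asymptotics and pick up an extra term (Lemma~\ref{lma:t6}), which is then removed by subtracting a multiple of another Jost solution, and the rest of the punctured plane is reached through the symmetries $k\mapsto\overline k$, $k\mapsto\ii k$, $k\mapsto -k$. Your tie-breaking rule (ties go to infinity) instead changes the kernel on the rays --- e.g. for $k>0$ and target mode $-\ii k$ you would use $A+B_-$ where the paper uses $A+B$ --- and one can check, as you assert, that this yields clean asymptotics directly for every target and every $k\neq0$: contributions anchored at infinity need only boundedness of the renormalised kernel, since smallness comes from $\int_x^\infty\norm{Q(y)}\dd y\to0$, while contributions anchored at $x_0$ have strictly negative exponents. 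So you avoid both the correction lemma and the symmetry bookkeeping. The trade-off is worth noting: the paper's insistence on a single kernel family per closed sector is exactly what makes its representatives $\myF_j(x,k;r)$ analytic in $S_\delta$ and continuous up to the boundary rays (Theorem~\ref{thm.e2}), which is essential later for the boundary values of the $M$-function; your kernels jump as $k$ crosses a Stokes line, so your solution family, while fully adequate for Theorem~\ref{thm.e1}, would not directly serve those subsequent continuity arguments.
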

We will call $\oldF_{\pm1}$ \emph{exponential Jost solutions} and $\oldF_{\pm\ii}$ \emph{oscillatory Jost solutions}.

\begin{remark} $\,$ \nopagebreak
\begin{enumerate}[1.]
\item
The Jost solutions $\oldF_{-1}$, $\oldF_{+1}$, $\oldF_{-\ii}$, $\oldF_{+\ii}$ are linearly independent and so any vector solution of \eqref{e30} is a linear combination of them. 
\item
Let $k>0$; observe that while $\oldF_{-1}$ is uniquely defined by the asymptotics \eqref{eq:F-1}, the other solutions are not. For example, for any $c\in\bbC$ the solution $\oldF_{+\ii}+c\oldF_{-1}$ satisfies the same asymptotics as $\oldF_{+\ii}$, because $c\oldF_{-1}$ can be absorbed into the error term of $\oldF_{+\ii}$. This non-uniqueness is a new feature of our set-up, compared with the classical self-adjoint theory. Because of this, our Jost solutions $\oldF_{j}$, $j\in\{\pm1,\pm\ii\}$ should be more accurately described as \emph{classes} of solutions satisfying the asymptotics stated in the theorem. We will occasionally use notation of the type $\myF\in \oldF_{+\ii}$ to mean that the function $F$ satisfies the differential equation \eqref{e30} and the asymptotics \eqref{eq:F+i}, \eqref{eq:F+i-prime}.
\item
We note that our proof of Theorem~\ref{thm.e1} in Section~\ref{sec.e} does not depend on the special structure \eqref{eq:a2} of the potential $Q$ or even on $Q$ being Hermitian. The only thing that matters is the integrability of $Q$. 
\item
The system \eqref{e30} bears a structural resemblance to a one-dimensional Dirac system since the Pauli matrix $\epsilon$ is indefinite. However, unlike first-order Dirac systems, our operator is governed by the second-order term $-\epsilon \frac{\dd^2}{\dd x^2}$. Since the matrix $\epsilon$ has eigenvalues $\pm 1$, the equation for $Q=0$ simultaneously admits both oscillatory ($\ee^{\pm \ii kx}$) and exponential ($\ee^{\pm kx}$) solutions for the same spectral parameter $k$. This coexistence of ``hyperbolic'' and ``elliptic'' modes distinguishes our case from standard Dirac theory and necessitates specific analysis developed in Section~\ref{sec.e}.
\end{enumerate}
\end{remark}

Returning to $Q$ of the special structure \eqref{eq:a2}, we will see that, in an important particular case, one can reduce considerations to a scalar equation. 

\begin{proposition}\label{cr.e3}
Let $q\in L^1(\bbR_+)$, let $Q$ be as in \eqref{eq:a2} and $k>0$. Then there exists a unique solution $\newe=\newe(x,k)$ to the anti-linear eigenvalue equation 
\begin{equation}
-\newe''+q\newe=-k^2\overline{\newe}
\label{e48}
\end{equation}
(note the complex conjugation in the right-hand side!) 
satisfying the asymptotics
\begin{equation}
\newe(x,k)=\ee^{-kx}+\smallO(\ee^{-kx}) \quad\text{as}\quad x\to\infty.
\label{e49}
\end{equation}
The Jost solutions $\oldF_{-1}(x,k)$ and $\oldF_{+\ii}(x,\ii k)$ of Theorem~\ref{thm.e1} can be expressed as
\begin{equation}
\oldF_{-1}(x,k)=\begin{pmatrix}\overline{\newe(x,k)}\\ -\newe(x,k)\end{pmatrix}, 
\qquad 
\oldF_{+\ii}(x,\ii k)=\begin{pmatrix}\overline{\newe(x,k)}\\ \newe(x,k)\end{pmatrix}. 
\label{e50}
\end{equation}
The point $k^2>0$ is an eigenvalue of $\bH$ if and only if $\newe$ satisfies the boundary condition
\begin{equation}
\ell_\alpha^\perp(\newe)=0.
\label{e6a}
\end{equation}
\end{proposition}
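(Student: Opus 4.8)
The plan is to construct $\newe$ from the exponential Jost solution $\oldF_{-1}(\cdot,k)$ of Theorem~\ref{thm.e1}, exploiting a conjugation symmetry of the system \eqref{e30} that is available precisely for $k>0$.

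First I would record the symmetry. For real $k$ one checks directly, using $\epsilon^2=I$ and the identity $Q\epsilon=\epsilon\overline{Q}$, that if $F$ solves \eqref{e30} then so does $SF:=\epsilon\overline{F}$: conjugating the equation gives $-\epsilon\overline{F}''+\overline{Q}\,\overline{F}=k^2\overline{F}$, and multiplying on the left by $\epsilon$ turns this into $-\epsilon(SF)''+Q(SF)=k^2\,SF$. Applied to $\oldF_{-1}$, whose leading term is the \emph{real} vector $\ee^{-kx}e_-$, and since $\epsilon e_-=-e_-$, the solution $S\oldF_{-1}$ has asymptotics $-\ee^{-kx}e_-+\smallO(\ee^{-kx})$. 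Because $\oldF_{-1}$ is the unique solution with the asymptotics \eqref{eq:F-1} (it is the only decaying mode for $k>0$; see Remark~2 after Theorem~\ref{thm.e1}), we conclude $\epsilon\overline{\oldF_{-1}}=-\oldF_{-1}$. Writing $\oldF_{-1}=(g_1,g_2)^{T}$, this is the pair of relations $\overline{g_2}=-g_1$ and $\overline{g_1}=-g_2$; setting $\newe:=-g_2=\overline{g_1}$ yields $\oldF_{-1}=(\overline{\newe},-\newe)^{T}$, the first identity in \eqref{e50}. Substituting this into the second equation of \eqref{eq:ev-eq2} produces exactly \eqref{e48}, the first equation being its complex conjugate, and the second component of \eqref{eq:F-1} gives \eqref{e49}. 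For uniqueness, any $\newe$ solving \eqref{e48}--\eqref{e49} makes $(\overline{\newe},-\newe)^{T}$ a solution of \eqref{e30} with the asymptotics \eqref{eq:F-1}, hence equal to the unique $\oldF_{-1}$, which pins down $\newe$.

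For the second identity in \eqref{e50} I would substitute $(\overline{\newe},\newe)^{T}$ directly into \eqref{e30} with $k$ replaced by $\ii k$: the two resulting scalar equations are $-\newe''+q\newe=-k^2\overline{\newe}$ and its conjugate, i.e. exactly \eqref{e30} for spectral parameter $(\ii k)^2=-k^2$. By \eqref{e49} its asymptotics are $\ee^{-kx}e_++\smallO(\ee^{-kx})$, matching $\oldF_{+\ii}(\cdot,\ii k)$; and for the parameter $\ii k$ the decaying mode is again the unique one (same argument as for $\oldF_{-1}$, the slowest mode now being $\oldF_{+\ii}$), so the two solutions coincide. For the eigenvalue criterion I would note that, for $k>0$, the only $L^2(\bbR_+;\bbC^2)$ solution of \eqref{e30} up to scalars is $\oldF_{-1}$: among the four linearly independent Jost solutions, $\oldF_{+1}$ grows and $\oldF_{\pm\ii}$ are oscillatory (neither decaying nor square-integrable), so any square-integrable combination reduces to a multiple of $\oldF_{-1}$. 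Since $k^2>0$ is an eigenvalue of the self-adjoint $\bH$ precisely when this $L^2$ solution satisfies the boundary condition $L_\alpha^\perp(\oldF_{-1})=0$, it remains to evaluate this functional on $\oldF_{-1}=(\overline{\newe},-\newe)^{T}$. Using \eqref{eq:e30c} together with the conjugation rule $\ell_{\overline{\alpha}}^\perp(\overline{\newe})=\overline{\ell_\alpha^\perp(\newe)}$, one obtains $L_\alpha^\perp(\oldF_{-1})=(\overline{\ell_\alpha^\perp(\newe)},-\ell_\alpha^\perp(\newe))^{T}$, which vanishes if and only if \eqref{e6a} holds; the case $\alpha=\infty$ is identical with $\ell_\alpha^\perp(f)=f(0)$.

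The main obstacle --- really the only delicate point --- is the uniqueness and one-dimensionality inputs: that $\oldF_{-1}(\cdot,k)$ and $\oldF_{+\ii}(\cdot,\ii k)$ are each the unique decaying solution for their spectral parameter, and correspondingly that the $L^2$ solution space is one-dimensional. These rest on the non-decaying, non-$L^2$ nature of the oscillatory modes and on the linear independence in Remark~1, so they follow from Theorem~\ref{thm.e1} rather than requiring fresh analysis.
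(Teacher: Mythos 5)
Your proposal is correct and follows essentially the same route as the paper: both arguments rest on the conjugation symmetry $F\mapsto\epsilon\overline{F}$ of \eqref{e30} for real $k^2$ (via $Q\epsilon=\epsilon\overline{Q}$) together with the fact that the decaying Jost class is a one-point set, which forces $-\epsilon\overline{\oldF_{-1}}=\oldF_{-1}$ and yields \eqref{e50}, \eqref{e48}, \eqref{e49}, the uniqueness of $\newe$, and the eigenvalue criterion through $L_\alpha^\perp(\oldF_{-1})=0\Leftrightarrow\ell_\alpha^\perp(\newe)=0$. The only cosmetic difference is that you obtain the second identity in \eqref{e50} by substituting the already-constructed $\newe$ into the system at parameter $\ii k$ and invoking uniqueness of the class $\oldF_{+\ii}(\cdot,\ii k)$, whereas the paper applies the symmetry to $\oldF_{+\ii}(\cdot,\ii k)$ itself and then identifies the two resulting scalar functions by the uniqueness of $\newe$.
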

Proposition~\ref{cr.e3} is proved at the end of Section~\ref{sec.e}. 
We will call $\newe$ the \emph{scalar Jost solution}.

\subsection{The spectrum of $\bH$} 
First we describe the spectrum of $\bH$. 
\begin{theorem}\label{thm.a1}
Let $q\in L^1(\bbR_+)$ and let $Q$ be as in \eqref{eq:a2}.
\begin{enumerate}[\rm (i)]
\item
The non-zero eigenvalues of $\bH$ are simple. If infinite in number, these eigenvalues form a sequence converging to zero. If $\lambda=0$ is an eigenvalue of $\bH$, it has multiplicity two. 
\item
The absolutely continuous spectrum of $\bH$ coincides with $\bbR$ and has multiplicity one. For 
$\lambda\in\bbR\setminus(\sigma_{\mathrm p}(\bH)\cup\{0\})$, the Radon--Nikodym derivative $\dd\Sigma(\lambda)/\dd\lambda$ is a rank one matrix which is continuous  in $\lambda\in\bbR\setminus(\sigma_{\mathrm p}(\bH)\cup\{0\})$.
\item
The singular continuous spectrum of $\bH$ is absent. 
\end{enumerate}
\end{theorem}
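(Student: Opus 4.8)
The plan is to reduce all three assertions to the half-line $\lambda>0$ using the reflection symmetry $-\bH\cong\bH$ coming from \eqref{eq:e30b}, and then to extract the spectral information from the Jost solutions of Theorem~\ref{thm.e1} together with the boundary behaviour of the Titchmarsh--Weyl $M$-function.

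\emph{Part (i).} Fix $\lambda=k^2$ with $k>0$. Writing an arbitrary solution of \eqref{e30} in the Jost basis $\oldF_{-1},\oldF_{+1},\oldF_{+\ii},\oldF_{-\ii}$ and using the asymptotics \eqref{eq:F-1}--\eqref{eq:F+i}, I note that only $\oldF_{-1}\sim\ee^{-kx}e_-$ lies in $L^2(\bbR_+;\bbC^2)$, while $\oldF_{+1}$ grows and $\oldF_{\pm\ii}$ merely oscillate. Hence the space of $L^2$-solutions is at most one-dimensional, so every eigenvalue $\lambda=k^2>0$ is geometrically simple, and self-adjointness of $\bH$ promotes this to algebraic simplicity. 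By Proposition~\ref{cr.e3}, $\oldF_{-1}$ obeys the boundary condition precisely when the scalar Jost function $g(k):=\ell_\alpha^\perp(\newe(\cdot,k))$ vanishes. Since $\oldF_{-1}(\cdot,k)$ is analytic in $k$ on a complex neighbourhood of $(0,\infty)$ (there $\ee^{-kx}$ is the unique fastest-decaying mode; cf. Section~\ref{sec:cont}), the function $g$ is analytic there and, as it does not vanish for large $k$ (its leading term is that of the free solution), it is not identically zero. Therefore the zeros of $g$ on $(0,\infty)$ are isolated and bounded away from $\infty$, so the positive eigenvalues form a discrete set whose only possible accumulation point is $0$; the symmetry transfers this to $\lambda<0$. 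Finally $\bH\binom{u}{v}=0$ iff $H^*u=0$ and $Hv=0$, and conjugation $v\mapsto\overline v$ identifies $\Ker H$ with $\Ker H^*=\Ker H(\overline q,\overline\alpha)$; in the limit-point case $q\in L^1(\bbR_+)$ the equation $-v''+qv=0$ has at most one $L^2$-solution, so $\dim\Ker\bH=2\dim\Ker H\in\{0,2\}$, giving the multiplicity-two claim at the threshold.

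\emph{Boundary values of $M_\alpha$.} For $\Im\lambda>0$, taking $k=\sqrt\lambda$ in the open first quadrant, the two square-integrable solutions span $\mathcal F:=\{\oldF_{-1},\oldF_{+\ii}\}$ (with $\ee^{-kx}$- and $\ee^{\ii kx}$-decay). Since the columns of $X$ in \eqref{eq:X} are $L^2$, we have $X=\mathcal FC$ for a constant matrix $C$; applying $L_\alpha,L_\alpha^\perp$ and using the Cauchy data \eqref{eq:phi,theta_bc_cond} yields
\[
M_\alpha(\lambda)=L_\alpha(\mathcal F)\,L_\alpha^\perp(\mathcal F)^{-1}\epsilon ,
\]
valid wherever the Jost matrix $L_\alpha^\perp(\mathcal F)$ is invertible. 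As $\lambda$ approaches the real axis, $\oldF_{-1}$ stays decaying while $\oldF_{+\ii}$ tends to an oscillatory solution; invoking the continuity of the Jost solutions in $k$ up to $\bbR\setminus\{0\}$ (Section~\ref{sec:cont}), the matrix $L_\alpha^\perp(\mathcal F)$ acquires a continuous boundary value. Where this boundary matrix is invertible, $M_\alpha(\lambda+\ii0)$ exists, is finite and continuous. The key technical point, established along the way, is that the boundary Jost matrix remains invertible for every $\lambda\in\bbR\setminus(\sigma_{\mathrm p}(\bH)\cup\{0\})$ (the analogue of the classical non-vanishing of the Jost function on the continuous spectrum), so that $M_\alpha(\lambda+\ii0)$ is finite and continuous precisely off $\sigma_{\mathrm p}(\bH)\cup\{0\}$.

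\emph{Parts (ii) and (iii).} The a.c. density is $\dd\Sigma/\dd\lambda=\tfrac1\pi\Im M_\alpha(\lambda+\ii0)$, which by the previous step is continuous on $\bbR\setminus(\sigma_{\mathrm p}(\bH)\cup\{0\})$. To fix the a.c. spectrum and its multiplicity I would compare $\bH$ with the free operator $-\epsilon\,\dd^2/\dd x^2$: since $Q\in L^1(\bbR_+)$, the resolvent difference is trace class, so the wave operators exist and are complete, and the free operator has a.c. spectrum $\bbR$ of multiplicity one (the $e_+$- and $e_-$-channels contribute $[0,\infty)$ and $(-\infty,0]$, each of multiplicity one). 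Hence the a.c. spectrum of $\bH$ equals $\bbR$ with multiplicity one, and the matrix density is rank one a.e.; continuity then makes it rank one throughout $\bbR\setminus(\sigma_{\mathrm p}(\bH)\cup\{0\})$, which is (ii). For (iii), the finiteness and continuity of $M_\alpha(\lambda+\ii0)$ off $\sigma_{\mathrm p}(\bH)\cup\{0\}$ show that the singular part of $\Sigma$ is supported on the countable set $\sigma_{\mathrm p}(\bH)\cup\{0\}$; a measure whose singular part lives on a countable set has no continuous singular component, so $\bH$ has no singular continuous spectrum and the singular part of $\Sigma$ is exhausted by the point masses at the eigenvalues of part~(i).

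\emph{Main obstacle.} The crux is the boundary-value analysis of $M_\alpha$: proving that $M_\alpha(\lambda+\ii0)$ exists, is finite, and varies continuously away from the eigenvalues. This rests on the continuity of the Jost solutions up to the real axis (Section~\ref{sec:cont}), delicate because the hyperbolic mode $\oldF_{-1}$ and the elliptic mode $\oldF_{+\ii}$ coexist at the same energy, and on ruling out spurious real singularities of the Jost matrix $L_\alpha^\perp(\mathcal F)$ beyond eigenvalues. A feature with no classical counterpart is that the eigenvalues are \emph{embedded} in the a.c. spectrum; the decoupling of point and continuous spectra that makes the argument work reflects the structural fact that eigenfunctions live in the decaying $e_-$ mode, whereas the single open a.c. channel is the oscillatory $e_+$ mode. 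Note also that completeness of the wave operators alone does not preclude a singular continuous part, so the finer $M$-function analysis is genuinely needed for~(iii).
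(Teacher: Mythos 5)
Your overall architecture is close to the paper's: part (i) via the Jost basis of Theorem~\ref{thm.e1}, and parts (ii)--(iii) via boundary values of the representation $M_\alpha(\lambda)=L_\alpha(E)\bigl(L_\alpha^\perp(E)\bigr)^{-1}\epsilon$ with $E=\{\oldF_{+\ii},\oldF_{-1}\}$, which is exactly the paper's formula \eqref{e4}. However, the step you describe as ``established along the way'' --- that $L_\alpha^\perp(E(\cdot,k))$ is invertible for every $k>0$ with $k^2\notin\sigma_{\mathrm p}(\bH)$ --- is never actually established in your proposal, and it is precisely the nontrivial content of the theorem (the paper's Lemma~\ref{lma:e3}(i)). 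It is also not quite ``the analogue of the classical non-vanishing of the Jost function'': here the determinant \emph{does} vanish on $(0,\infty)$, exactly at the embedded eigenvalues, so what must be proved is the implication (vanishing $\Rightarrow$ eigenvalue). The paper proves this by a self-Wronskian argument: if $\det L_\alpha^\perp(E(\cdot,k))=0$ at $k>0$, some combination $\widetilde F=a\myF_{+\ii}+b\myF_{-1}$ satisfies the boundary condition; the Wronskian $[\widetilde F,\widetilde F]$ of \eqref{eq:wr-def} is $x$-independent, equals $0$ at $x=0$ by \eqref{eq:wr4}, and equals $4\ii k\abs{a}^2$ as $x\to\infty$ by the boundary-ray asymptotics of $\myF_{+\ii}$, $\myF_{-1}$; hence $a=0$ and $k^2$ is an eigenvalue. (For $\Im\lambda>0$ invertibility is easy: a zero determinant would produce a non-real eigenvalue of the self-adjoint $\bH$.) Without this argument, your continuity of $M_\alpha(\lambda+\ii0)$, the absence of singular continuous spectrum, and the continuity of the density all hang in the air; you correctly identify this as the crux but supply no proof of it.

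Two secondary points. First, your rank-one claim is weaker than the statement: the trace-class comparison with $-\epsilon\,\dd^2/\dd x^2$ (which does hold for $Q\in L^1$ by a standard factorisation argument) gives rank one only a.e., and continuity upgrades this to rank $\leq1$ everywhere, since matrices of rank $\leq1$ form a closed set, but it does not exclude that the density vanishes at isolated points of $\bbR\setminus(\sigma_{\mathrm p}(\bH)\cup\{0\})$. The paper gets rank exactly one at \emph{every} such point from the identity $\Im M_\alpha(\lambda+\ii0)=2k\,\epsilon(B^*)^{-1}P_1B^{-1}\epsilon$, $B=L_\alpha^\perp(E)$, where $P_1$ is the projection onto $e_1$ --- again a consequence of the same Wronskian computation you omit. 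Second, in part (i) your isolated-zeros argument needs analyticity of $k\mapsto\ell_\alpha^\perp(\newe(\cdot,k))$ in a \emph{neighbourhood} of $(0,\infty)$, whereas Theorem~\ref{thm.e2} gives analyticity only in the open sector $S_\delta$ and continuity up to the ray $k>0$; this is fixable (reflect via $\oldF_{-1}(x,k;Q)=\overline{\oldF_{-1}(x,\overline k;\overline Q)}$ and apply Morera across the ray, as the paper does for $M_\alpha$ across $\arg k=\pi/4$ in Appendix~\ref{sec:C}), but it must be said. The paper avoids this entirely with a compactness/orthogonality argument (dominated convergence against a limiting eigenfunction). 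Where your route genuinely differs and works --- the scattering-theoretic identification of the a.c. spectrum and the analytic-zeros approach to discreteness --- it buys softer, more conceptual arguments, but only after the two repairs above are carried out.
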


Our aim is to describe
\begin{itemize}
\item
the density $\dd\nu(\lambda)/\dd\lambda$ and $\psi(\lambda)$, if $\lambda\not=0$ is not a point mass of $\nu$,
\item
$\nu(\{\lambda\})$ and $\psi(\lambda)$, if $\lambda\not=0$ is a point mass of $\nu$,
\end{itemize}
in terms of the Jost solutions $\oldF_{-1}$, $\oldF_{+\ii}$, with $k=\sqrt{\abs{\lambda}}>0$.

\subsection{Formulas for the spectral pair $(\nu,\psi)$}
Now we are ready to state our explicit formulas for $(\nu,\psi)$.  
Since $\nu$ is even and $\psi$ is odd, it suffices to characterise $(\nu,\psi)$ on the positive semi-axis. We recall the last claim of Proposition~\ref{cr.e3}: $k^2>0$ is an eigenvalue of $\bH$ if and only if $\ell_\alpha^\perp(\newe)=0$.

\begin{theorem}\label{thm.5.3}
Let $q\in L^1(\bbR_+)$ and let the Jost solutions $\oldF_{+\ii}(x,k)$, $\oldF_{-1}(x,k)$, $\newe(x,k)$ for $k>0$ be as described in Theorem~\ref{thm.e1} and Proposition~\ref{cr.e3}.
\begin{enumerate}[\rm(i)]
\item
Suppose $\lambda=k^2$ is not an eigenvalue of $\bH$. 
Then
\begin{equation}
\frac{\dd\nu}{\dd\lambda}(\lambda)=\frac{2k}{\pi}
\frac{\abs{\ell_\alpha^\perp(\newe)}^2}{\abs{\det\{L_\alpha^\perp(\oldF_{+\ii}),L_\alpha^\perp(\oldF_{-1})\}}^2},
\quad
\psi(\lambda)=
\frac{\ell_\alpha^\perp(\newe)}{\overline{\ell_\alpha^\perp(\newe)}}.
\label{ee5}
\end{equation}
The determinant in the denominator is unambiguously defined (i.e. it does not depend on the choice of the Jost solution $\oldF_{+\ii}$) and does not vanish. 
\item 
Suppose $\lambda=k^2$ is an eigenvalue of $\bH$. Then $\ell_{\alpha}(\newe)\neq0$ and we have
\begin{equation}
\nu(\{\lambda\})=
\frac{\abs{\ell_\alpha(\newe)}^2}{2\norm{\newe}^2},
\quad
\psi(\lambda)=
-\frac{\overline{\ell_\alpha(\newe)}}{\ell_\alpha(\newe)}.
\label{eq:nsa8}
\end{equation}
\end{enumerate}
\end{theorem}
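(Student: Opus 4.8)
The plan is to route everything through the Titchmarsh--Weyl function $M_\alpha$, using the standard Herglotz dictionary: on the absolutely continuous spectrum $\dd\Sigma/\dd\lambda=\tfrac1\pi\Im M_\alpha(\lambda+\ii0)$, and at an eigenvalue $\Sigma(\{\lambda_0\})=-\Res_{\lambda_0}M_\alpha$. First I would express $M_\alpha$ through the Jost solutions. For $\Im\lambda>0$ take $k=\sqrt\lambda$ in the first quadrant; then $\oldF_{+\ii}$ (decaying like $\ee^{\ii kx}$) and $\oldF_{-1}$ (decaying like $\ee^{-kx}$) are exactly the two $L^2$ solutions, so each column of the Weyl solution $X=\Theta-\Phi M_\alpha$ lies in their span, $X=\{\oldF_{+\ii},\oldF_{-1}\}C$. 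Applying the boundary operators and using $L_\alpha^\perp(X)=\epsilon$, $L_\alpha(X)=M_\alpha$ (immediate from \eqref{eq:phi,theta_bc_cond}) gives $C=B^{-1}\epsilon$ with $B:=\{L_\alpha^\perp(\oldF_{+\ii}),L_\alpha^\perp(\oldF_{-1})\}$, and a closed expression for $M_\alpha$. From Proposition~\ref{cr.e3}, together with $\ell_{\overline\alpha}(\overline f)=\overline{\ell_\alpha(f)}$ and $\ell_{\overline\alpha}^\perp(\overline f)=\overline{\ell_\alpha^\perp(f)}$, I record the two identities $L_\alpha^\perp(\oldF_{-1})=(\overline{\ell_\alpha^\perp(\newe)},-\ell_\alpha^\perp(\newe))^\top$ and $L_\alpha(\oldF_{-1})=(\overline{\ell_\alpha(\newe)},-\ell_\alpha(\newe))^\top$, which are what turn the matrix data into the scalar quantities in the statement.

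For part (i) the cleanest route to $\Im M_\alpha(\lambda+\ii0)$ is the conserved Wronskian. For real $\lambda=k^2$ the column Wronskians $[X_i,X_j]$ are $x$-independent; assembled into a matrix they equal $2\ii(\Im M_\alpha)^\top$ at $x=0$ by \eqref{eq:wr4} and the boundary data of $X$, while at $x=\infty$ only the oscillatory mode survives, so the matrix equals $[\oldF_{+\ii},\oldF_{+\ii}]\,r r^{*}$ with $r$ the first row of $C$, and \eqref{eq:F+i}--\eqref{eq:F+i-prime} give $[\oldF_{+\ii},\oldF_{+\ii}]=4\ii k$. Hence $\Im M_\alpha(\lambda+\ii0)=2k\,w w^{*}$ is rank one; computing $r$ from $C=B^{-1}\epsilon$ and inserting the identity for $L_\alpha^\perp(\oldF_{-1})$ gives $w\propto(\ell_\alpha^\perp(\newe),\overline{\ell_\alpha^\perp(\newe)})^\top/\overline{\det B}$. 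The diagonal entry yields $\dd\nu/\dd\lambda$ and the ratio of the off-diagonal to the diagonal entry yields $\psi$, exactly as in \eqref{ee5}. Representative-independence of $\det B$ holds because replacing $\oldF_{+\ii}$ by $\oldF_{+\ii}+c\oldF_{-1}$ only adds a multiple of the second column; and $\det B\neq0$ off the eigenvalues follows since for $\Im\lambda>0$ a zero would produce an $L^2$ solution obeying the boundary condition, i.e.\ a non-real eigenvalue of the self-adjoint $\bH$, while on the real axis the finiteness of the density from Theorem~\ref{thm.a1}(ii) rules out a zero whenever $\ell_\alpha^\perp(\newe)\neq0$.

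For part (ii) I would compute the residue. At an eigenvalue $\ell_\alpha^\perp(\newe)=0$, so $\oldF_{-1}$ obeys the boundary condition; being a solution with $L_\alpha^\perp=0$ it equals $\Phi c$ with $c=-L_\alpha(\oldF_{-1})=(-\overline{\ell_\alpha(\newe)},\ell_\alpha(\newe))^\top$, and $\ell_\alpha(\newe)\neq0$ because $\ell_\alpha(\newe)=\ell_\alpha^\perp(\newe)=0$ would force $\newe(0)=\newe'(0)=0$, hence $\newe\equiv0$. Demanding that $X=\Theta-\Phi M_\alpha$ remain $L^2$ forces the singular part of $\Phi M_\alpha$ to be a multiple of $\oldF_{-1}=\Phi c$, so $\Res_{\lambda_0}M_\alpha=c m^{*}$ and $\Sigma(\{\lambda_0\})=\mu\,c c^{*}$ for some $\mu>0$. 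To fix $\mu$ I would use the matrix Lagrange identity $\Im M_\alpha=(\Im\lambda)\int_0^\infty X^{*}X\,\dd x$ (a direct consequence of \eqref{eq:wr4}) and match the $\eta^{-1}$ singularities of both sides along $\lambda=\lambda_0+\ii\eta$, where $X$ has a simple pole proportional to $\oldF_{-1}$; this gives $\mu=1/\|\oldF_{-1}\|^2=1/(2\|\newe\|^2)$. Reading off the diagonal and off-diagonal entries of $\mu\,c c^{*}$ then produces \eqref{eq:nsa8}.

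The step I expect to be the main obstacle is the evaluation of $\Im M_\alpha(\lambda+\ii0)$ in part (i): one must correctly handle the coexistence of the decaying mode $\oldF_{-1}$ and the oscillatory mode $\oldF_{+\ii}$ in the Weyl solution, extract the rank-one imaginary part via the conserved Wronskian, and—crucially—show that $\det\{L_\alpha^\perp(\oldF_{+\ii}),L_\alpha^\perp(\oldF_{-1})\}$ is both independent of the non-unique representative $\oldF_{+\ii}$ (Remark~2 after Theorem~\ref{thm.e1}) and non-vanishing off the eigenvalues. Determining the normalisation constant $\mu$ in part (ii) through the Lagrange identity is the secondary technical point.
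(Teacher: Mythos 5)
Your route for part (i) is essentially the paper's: you represent $M_\alpha$ as $L_\alpha(E)(L_\alpha^\perp(E))^{-1}\epsilon$ with $E=\{\oldF_{+\ii},\oldF_{-1}\}$ (this is the paper's Lemma~\ref{lma:e3a}), you extract the rank-one $\Im M_\alpha(\lambda+\ii0)$ from a conserved Wronskian evaluated at $x=\infty$, where only $[\oldF_{+\ii},\oldF_{+\ii}]=4\ii k$ survives, and you read off $(\nu,\psi)$ from \eqref{eq:a1}; your algebra, including the vector $(\ell_\alpha^\perp(\newe),\overline{\ell_\alpha^\perp(\newe)})/\overline{\det B}$, reproduces the paper's computation. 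In part (ii) you diverge mildly: the paper simply quotes \cite[Lemma 10.2]{PS3} for $\Sigma(\{\lambda\})$, whereas you re-derive that lemma via the singular part of $M_\alpha$ and the Lagrange identity \eqref{eq:id}; this is a legitimate self-contained alternative, and your identifications $c=-L_\alpha(\oldF_{-1})$ and $\mu=1/\norm{\oldF_{-1}}^2=1/(2\norm{\newe}^2)$ give exactly \eqref{eq:nsa8}. Two details there need care: since by Theorem~\ref{thm.a1}(ii) the absolutely continuous spectrum fills all of $\bbR$, a positive eigenvalue is \emph{embedded}, so $M_\alpha$ has no genuine pole at $\lambda_0$ and ``residue'' must be read as the non-tangential limit $\Sigma(\{\lambda_0\})=\lim_{\eta\downarrow0}(-\ii\eta)M_\alpha(\lambda_0+\ii\eta)$ (your matching of $\eta^{-1}$ singularities is then the right mechanism); and the $L^2$-convergence of $\ii\eta X(\cdot,\lambda_0+\ii\eta)$ to a multiple of $\oldF_{-1}$, needed to evaluate $\mu$, requires the uniform bound coming from \eqref{eq:id} plus a Fatou argument, not just locally uniform convergence.

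The genuine gap is your argument that $\det B=\det\{L_\alpha^\perp(\oldF_{+\ii}),L_\alpha^\perp(\oldF_{-1})\}\neq0$ at real $\lambda=k^2$ off the eigenvalues. You appeal to ``the finiteness of the density from Theorem~\ref{thm.a1}(ii)'', but this is circular: in the paper, parts (ii)--(iii) of Theorem~\ref{thm.a1} (existence, finiteness and continuity of $\dd\Sigma/\dd\lambda$ off the eigenvalues) are themselves proved \emph{from} the non-singularity of $L_\alpha^\perp(E(\cdot,k))$ (Lemma~\ref{lma:e3}(i)) together with the continuity of the Jost solutions in $k$ (Theorem~\ref{thm.e2}); there is no independent source for that finiteness. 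Moreover, even granting \ref{thm.a1}(ii), the implication is not immediate, because the density formula with $\abs{\det B}^2$ in the denominator is only derived at points where $B$ is invertible, so a zero of $\det B$ does not directly contradict finiteness without an extra limiting argument. The correct and simple proof is a Wronskian argument for which you already have every ingredient: if $\det B=0$ at some $k>0$, choose $(a,b)\neq(0,0)$ with $L_\alpha^\perp(\widetilde F)=0$ for $\widetilde F=a\oldF_{+\ii}+b\oldF_{-1}$; since $k^2$ is real, $[\widetilde F,\widetilde F]$ is independent of $x$; by \eqref{eq:wr4} it vanishes at $x=0$, while the asymptotics \eqref{eq:F-1}--\eqref{eq:F+i-prime} give $\lim_{x\to\infty}[\widetilde F,\widetilde F]=4\ii k\abs{a}^2$; hence $a=0$, so $L_\alpha^\perp(\oldF_{-1})=0$ and $k^2$ is an eigenvalue by Proposition~\ref{cr.e3}. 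Finally, to invoke the Herglotz dictionary at all you need $M_\alpha(\lambda+\ii0)$ to exist and be continuous on compact intervals free of eigenvalues; this rests on the continuity of $\myF_{-1},\myF_{+\ii}$ up to the ray $k>0$ (Theorem~\ref{thm.e2}), which your write-up uses implicitly and should cite.
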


In particular, we see from \eqref{ee5} and \eqref{eq:nsa8} that $\abs{\psi(\lambda)}=1$ for $\nu$-a.e. $\lambda>0$, which implies the simplicity of spectrum of $\bH$ on $\bbR_{+}$ by \cite[Theorem~1.5]{PS3}.

\begin{remark}
Similarly to  \eqref{eq:sa1}, formula \eqref{eq:nsa8} does not really require $q$ to be integrable. It holds for any eigenvalue of $\bH$ with $\newe$ being the unique (up to normalisation) $L^2$-solution of the anti-linear eigenvalue equation \eqref{e48}, see \cite[Theorem~1.13]{PS3}.
\end{remark}

\subsection{Scattering theory perspective}
Here we give a version of Theorem~\ref{thm.5.3} in the spirit of \eqref{X9}. 
For $\lambda>0$, let $\Phi$ be the $2\times2$ matrix-valued solution of the eigenvalue equation \eqref{eq:init_diff_eq}
defined by the initial conditions \eqref{eq:phi,theta_bc_cond}. Let us denote the columns of $\Phi$ by $\Phi_1$ and $\Phi_2$. In other words, $\Phi_1$ and $\Phi_2$ are $\bbC^2$-valued solutions of the eigenvalue equation \eqref{eq:init_diff_eq} with the Cauchy data
\begin{equation}
\left\{
\begin{aligned}
L_\alpha(\Phi_1)&=-e_1,
\\
L_\alpha^\perp(\Phi_1)&=0,
\end{aligned}
\right.
\quad
\left\{
\begin{aligned}
L_\alpha(\Phi_2)&=-e_2,
\\
L_\alpha^\perp(\Phi_2)&=0.
\end{aligned}
\right.
\label{eq:wr5}
\end{equation}
Since the Jost solutions $\oldF_{\pm1}$, $\oldF_{\pm\ii}$ (with $k=\sqrt{\lambda}>0$) form a basis, we can expand $\Phi_1$, $\Phi_2$ in this basis:
\begin{align}
\Phi_1&=\gamma^{1}_{-1}\oldF_{-1}+\gamma^{1}_{+\ii}\oldF_{+\ii}+\gamma^{1}_{-\ii}\oldF_{-\ii}+\gamma^{1}_{+1}\oldF_{+1},
\label{eq:ge1}
\\
\Phi_2&=\gamma^{2}_{-1}\oldF_{-1}+\gamma^{2}_{+\ii}\oldF_{+\ii}+\gamma^{2}_{-\ii}\oldF_{-\ii}+\gamma^{2}_{+1}\oldF_{+1}.
\label{eq:ge2}
\end{align}
Observe that \eqref{eq:ge1}, \eqref{eq:ge2} assume that we have made a \emph{choice} of representatives of the solution classes $\oldF_{+\ii}$, $\oldF_{-\ii}$, $\oldF_{+1}$. The coefficients $\gamma_{+1}^{1}$ and $\gamma_{+1}^{2}$ of the growing solution $\oldF_{+1}$ are unambiguously defined, while the rest of the coefficients depend on the choice of the three solutions $\oldF_{+\ii}$, $\oldF_{-\ii}$, $\oldF_{+1}$. However, it turns out that some \emph{combinations} of the coefficients $\gamma$ are unambiguously defined and it is possible to express the spectral pair $(\nu,\psi)$ in terms of these combinations. 

\begin{theorem}\label{thm:a4}
Let $q\in L^1(\bbR_+)$, $k>0$ and $\lambda=k^2$. 
\begin{enumerate}[\rm (i)]
\item
Assume $\lambda$ is not an eigenvalue of $\bH$. Then 
\begin{equation}
\gamma_{+1}^2=-\overline{\gamma_{+1}^1}\not=0
\label{eq:ge3}
\end{equation}
and the expression $\gamma_{+1}^2\gamma_{+\ii}^1-\gamma_{+1}^1\gamma_{+\ii}^2$ is unambiguously defined and non-zero. 
The spectral pair $(\nu,\psi)$ can be expressed as
\[
\frac{\dd\nu}{\dd\lambda}(\lambda)=\frac1{8\pi k}\frac{\abs{\gamma_{+1}^1}^2}{\abs{\gamma_{+1}^2\gamma_{+\ii}^1-\gamma_{+1}^1\gamma_{+\ii}^2}^2},
\quad
\psi(\lambda)=\frac{\overline{\gamma_{+1}^1}}{\gamma_{+1}^1}.
\]
\item
Assume $\lambda$ is an eigenvalue of $\bH$. Then $\gamma_{+1}^1=\gamma_{+1}^2=0$, 
the coefficients $\gamma_{\pm\ii}^1$, $\gamma_{\pm\ii}^2$ are unambiguously defined and 
\[
\abs{\gamma_{+\ii}^1}=\abs{\gamma_{-\ii}^1}=
\abs{\gamma_{+\ii}^2}=\abs{\gamma_{-\ii}^2}\not=0
\]
and
\[
\gamma_{+\ii}^2=\overline{\gamma_{-\ii}^1},
\quad
\gamma_{-\ii}^2=\overline{\gamma_{+\ii}^1}.
\]
The expression $\gamma_{-1}^1{\gamma_{-\ii}^2}-\gamma_{-\ii}^1\gamma_{-1}^2$ is unambiguously defined and non-zero.
The spectral pair $(\nu,\psi)$ can be expressed as
\[
\nu(\{\lambda\})=
\frac{1}{2\norm{\newe}^2}
\frac{\abs{\gamma_{+\ii}^1}^2}{\abs{\gamma_{-1}^1{\gamma_{-\ii}^2}-\gamma_{-\ii}^1\gamma_{-1}^2}^2}, 
\quad
\psi(\lambda)
=-\frac{{\gamma_{-\ii}^2}}{\gamma_{-\ii}^1}.
\]
\end{enumerate}
\end{theorem}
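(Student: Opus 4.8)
The plan is to deduce Theorem~\ref{thm:a4} from the already-established Theorem~\ref{thm.5.3} by expressing the invariant combinations of the expansion coefficients $\gamma$ that appear in the statement in terms of the boundary data $\ell_\alpha^\perp(\newe)$, $\ell_\alpha(\newe)$ and $L_\alpha^\perp(\oldF_{+\ii})$, $L_\alpha^\perp(\oldF_{-1})$ that control $(\nu,\psi)$ through \eqref{ee5} and \eqref{eq:nsa8}. The engine is the sesquilinear Wronskian \eqref{eq:wr-def}, which for $\lambda=k^2$ real is $x$-independent. First I would compute the Wronskian table of the Jost solutions from the leading asymptotics \eqref{eq:F-1}--\eqref{eq:F+i-prime}; using $\eps e_\pm=\pm e_\pm$ and $\jap{\eps e_+,e_-}_{\bbC^2}=0$, the only non-vanishing entries are $[\oldF_{+1},\oldF_{-1}]=-4k$, $[\oldF_{-1},\oldF_{+1}]=4k$, $[\oldF_{+\ii},\oldF_{+\ii}]=4\ii k$ and $[\oldF_{-\ii},\oldF_{-\ii}]=-4\ii k$ (because the Wronskian is antilinear in its second slot, the \emph{self}-pairings of the oscillatory modes survive). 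Pairing the expansions \eqref{eq:ge1}, \eqref{eq:ge2} against a single Jost solution then extracts one coefficient at a time, for instance $\gamma_{+1}^j=-[\Phi_j,\oldF_{-1}]/(4k)$, which is representative-independent since both $\Phi_j$ and $\oldF_{-1}$ are. Finally, \eqref{eq:wr4} together with the Cauchy data \eqref{eq:wr5} collapses these Wronskians to boundary functionals: $[\Phi_1,G]=-\overline{\ell_\alpha^\perp(G_2)}$ and $[\Phi_2,G]=-\overline{\ell_{\bar\alpha}^\perp(G_1)}$ for any solution $G=(G_1,G_2)^\top$.

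For part (i), the representation \eqref{e50} of $\oldF_{-1}$ together with the conjugation identity $\overline{\ell_\alpha^\perp(g)}=\ell_{\bar\alpha}^\perp(\bar g)$ gives at once $\gamma_{+1}^1=-\overline{\ell_\alpha^\perp(\newe)}/(4k)$ and $\gamma_{+1}^2=\ell_\alpha^\perp(\newe)/(4k)$; relation \eqref{eq:ge3} and the formula $\psi=\overline{\gamma_{+1}^1}/\gamma_{+1}^1$ then follow immediately on comparison with \eqref{ee5}, the non-vanishing being exactly the non-eigenvalue condition $\ell_\alpha^\perp(\newe)\neq0$ via Proposition~\ref{cr.e3}. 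For the density I would compute $\gamma_{+\ii}^j$ by the same pairing, choosing a representative of $\oldF_{+1}$ that is Wronskian-orthogonal to $\oldF_{+\ii}$ (permissible since $\oldF_{+1}$ is only defined modulo the decaying and the oscillatory modes), and then verify the identity $\gamma_{+1}^2\gamma_{+\ii}^1-\gamma_{+1}^1\gamma_{+\ii}^2=\tfrac{1}{16\ii k^2}\,\overline{\det\{L_\alpha^\perp(\oldF_{+\ii}),L_\alpha^\perp(\oldF_{-1})\}}$. Substituting $|\gamma_{+1}^1|^2=|\ell_\alpha^\perp(\newe)|^2/(16k^2)$ and this determinant identity into the claimed density turns it, after the powers of $k$ cancel, exactly into \eqref{ee5}; unambiguity and non-vanishing of the combination are inherited from the corresponding properties of the determinant in Theorem~\ref{thm.5.3}(i).

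Part (ii) is where the real work lies. Now $\ell_\alpha^\perp(\newe)=0$, so $\gamma_{+1}^1=\gamma_{+1}^2=0$ and $\oldF_{-1}$ is the eigenfunction at $\lambda=k^2$, simple by Theorem~\ref{thm.a1}. The structural relations come from the antilinear involution $F\mapsto\eps\overline F$, which maps solutions of \eqref{e30} to solutions because $\overline Q=\eps Q\eps$; its effect on the Cauchy data \eqref{eq:wr5} gives $\Phi_2=\eps\overline{\Phi_1}$, and on the asymptotics it allows the choice of representatives $\oldF_{+\ii}=\eps\overline{\oldF_{-\ii}}$ and $\eps\overline{\oldF_{-1}}=-\oldF_{-1}$. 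Feeding these into $\Phi_2=\eps\overline{\Phi_1}$ yields the conjugation relations $\gamma_{+\ii}^2=\overline{\gamma_{-\ii}^1}$, $\gamma_{-\ii}^2=\overline{\gamma_{+\ii}^1}$ and $\gamma_{-1}^2=-\overline{\gamma_{-1}^1}$; the equal-modulus statement follows from expanding the self-Wronskian, which gives $[\Phi_1,\Phi_1]=4\ii k(|\gamma_{+\ii}^1|^2-|\gamma_{-\ii}^1|^2)$, and this vanishes by \eqref{eq:wr4} since $L_\alpha^\perp(\Phi_1)=0$. Non-vanishing of the oscillatory coefficients holds because otherwise $\Phi_1$ would be a multiple of $\oldF_{-1}$, which is incompatible with $\ell_\alpha(\newe)\neq0$.

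The genuine obstacle, and the step I expect to be the most delicate, is to convert the combination $\gamma_{-1}^1\gamma_{-\ii}^2-\gamma_{-\ii}^1\gamma_{-1}^2$ and the ratio $\gamma_{-\ii}^2/\gamma_{-\ii}^1$ back into the eigenfunction quantities $\ell_\alpha(\newe)$ and $\norm{\newe}$ of \eqref{eq:nsa8}. The difficulty is that, unlike in part (i), the relevant growing and oscillatory Jost solutions are taken at argument $k$ and are \emph{not} expressible through the scalar Jost solution $\newe$, which by Proposition~\ref{cr.e3} only captures $\oldF_{-1}(\cdot,k)$ and $\oldF_{+\ii}(\cdot,\ii k)$. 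I would resolve this by working with the linear system obtained from applying $L_\alpha$ and $L_\alpha^\perp$ to the expansion of $\Phi_1$: the perpendicular equation $\gamma_{+\ii}^1L_\alpha^\perp(\oldF_{+\ii})+\gamma_{-\ii}^1L_\alpha^\perp(\oldF_{-\ii})=0$ (using $L_\alpha^\perp(\oldF_{-1})=0$) pins down the ratio $\gamma_{-\ii}^1/\gamma_{+\ii}^1$, while the $L_\alpha$-equation ties $\gamma_{-1}^1$ to $L_\alpha(\oldF_{-1})=(\overline{\ell_\alpha(\newe)},-\ell_\alpha(\newe))^\top$; combining these with the conjugation symmetries and the identity $\norm{\oldF_{-1}}^2=2\norm{\newe}^2$ should express both the determinant combination and the normalisation through $\ell_\alpha(\newe)$, matching \eqref{eq:nsa8}. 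Throughout, the conceptual subtlety to keep in view is the non-canonical nature of the Jost ``basis'': because the exponential and oscillatory modes coexist, individual coefficients depend on the chosen representatives, and every quantity asserted to be intrinsic must be checked to be invariant under $\oldF_{+\ii}\mapsto\oldF_{+\ii}+c\oldF_{-1}$ and $\oldF_{+1}\mapsto\oldF_{+1}+a\oldF_{-1}+b\oldF_{+\ii}+d\oldF_{-\ii}$.
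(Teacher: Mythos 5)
Your overall route is the same as the paper's: deduce everything from Theorem~\ref{thm.5.3} by Wronskian calculus, using $[\Phi_j,F]=-\jap{e_{3-j},L_\alpha^\perp(F)}$ (the paper's \eqref{eq:wr6}), the Wronskian table of the Jost solutions, the resulting identities $4k\gamma_{+1}^1=-\overline{\ell_\alpha^\perp(\newe)}$, $4k\gamma_{+1}^2=\ell_\alpha^\perp(\newe)$ and the determinant identity (the paper's \eqref{eq:wr17}); in part (ii) the conjugation relations likewise come from the antilinear symmetry $\Phi_2=\eps\overline{\Phi_1}$ exactly as in the paper. You have one genuinely different ingredient: the equal-moduli statement from the vanishing self-Wronskian, $0=[\Phi_1,\Phi_1]=4\ii k(\abs{\gamma_{+\ii}^1}^2-\abs{\gamma_{-\ii}^1}^2)$, which is zero by \eqref{eq:wr4} since $L_\alpha^\perp(\Phi_1)=0$. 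The paper instead gets $\abs{a_1}=\abs{a_2}$ for the eigenvector combination from \eqref{eq:wr22} and then uses \eqref{eq:wr19}--\eqref{eq:wr21}; your argument is shorter and is a legitimate alternative. Your normalisation of $\oldF_{+1}$ (choosing $[\oldF_{+1},\oldF_{+\ii}]=0$, permissible because $\oldF_{+1}\mapsto\oldF_{+1}+b\oldF_{+\ii}$ shifts this Wronskian by $4\ii kb$) versus the paper's device of carrying the ambiguous Wronskian through \eqref{eq:wr25}--\eqref{eq:wr26} and watching it cancel is a minor variant; it works provided you then verify (as you flag) that $\gamma_{+1}^2\gamma_{+\ii}^1-\gamma_{+1}^1\gamma_{+\ii}^2$ is invariant, since under the shift $\gamma_{+\ii}^j\mapsto\gamma_{+\ii}^j-b\gamma_{+1}^j$ it indeed does not change.

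One imprecision should be corrected: your opening Wronskian table ("the only non-vanishing entries are\dots") is false as stated. The pairings $[\oldF_{+1},\oldF_{\pm\ii}]$ are not zero; they are not even well defined, because the $\smallO(\ee^{kx})$ error in $\oldF_{+1}$ may hide oscillatory terms $c\,\ee^{\pm\ii kx}e_+$ that pair non-trivially with the leading term of $\oldF_{\pm\ii}$. This is exactly why the paper says these Wronskians ``cannot be computed unambiguously''. Your later choice of representative repairs this for the entries you use, but the table cannot be asserted as a computation from the leading asymptotics.

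The genuine gap is the final step of part (ii). Applying $L_\alpha$ to the expansion of $\Phi_1$ alone gives
\[
-e_1=\gamma_{-1}^1L_\alpha(\oldF_{-1})+\gamma_{+\ii}^1L_\alpha(\oldF_{+\ii})+\gamma_{-\ii}^1L_\alpha(\oldF_{-\ii}),
\]
and this does not ``tie $\gamma_{-1}^1$ to $L_\alpha(\oldF_{-1})$'': it involves the unknown boundary data $L_\alpha(\oldF_{\pm\ii})$ of the oscillatory solutions at argument $k$, which (as you yourself observe) are not expressible through $\newe$; the same is true of the $\Phi_2$ equation, so one is left with more unknowns than equations. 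The missing idea is to form the unique combination in which these unknowns drop out: take $(a_1,a_2)\neq(0,0)$ with $a_1\gamma_{\pm\ii}^1+a_2\gamma_{\pm\ii}^2=0$ (such a pair exists precisely because $\lambda$ is an eigenvalue, or because your conjugation relations and equal moduli make the coefficient matrix singular), so that $a_1\Phi_1+a_2\Phi_2=(a_1\gamma_{-1}^1+a_2\gamma_{-1}^2)\oldF_{-1}$, and only then apply $L_\alpha$. This is the paper's \eqref{eq:wr22}; it yields $\psi(\lambda)=a_1/a_2=-\gamma_{-\ii}^2/\gamma_{-\ii}^1$ and, after dividing by $a_1$,
\[
\overline{\ell_\alpha(\newe)}=\frac{\gamma_{-\ii}^2}{\gamma_{-\ii}^1\gamma_{-1}^2-\gamma_{-1}^1\gamma_{-\ii}^2},
\]
which simultaneously proves that the denominator is non-zero and unambiguous and, substituted into \eqref{eq:nsa8} together with $\abs{\gamma_{-\ii}^2}=\abs{\gamma_{+\ii}^1}$, gives the stated formula for $\nu(\{\lambda\})$. (Once \eqref{eq:nsa8} is quoted, the identity $\norm{\oldF_{-1}}^2=2\norm{\newe}^2$ is not needed.) With this elimination step inserted, your proof closes and coincides with the paper's.
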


\subsection{The Born approximation for the spectral pair}
In order to provide some intuition into the spectral pair, in Section~\ref{sec:Born} we compute the linearised forms of $\nu$ and $\psi$ for small $q$;  this is known as Born approximation in scattering theory. Here for simplicity we only display these formulas for the Dirichlet case $\alpha=\infty$. In the classical self-adjoint case with real-valued $q\in L^{1}(\bR_{+})$, the density \eqref{eq:sa2} satisfies
\begin{equation}
\frac{\dd\sigma}{\dd\lambda}(\lambda)
=\frac{1}{\pi}\left(k-\int_0^\infty \sin(2ky)q(y)\dd y\right)+\calO(\norm{q}_{L^1}^2) 
\label{eq:ba1}
\end{equation}
for any $\lambda=k^{2}>0$. 

\begin{proposition}\label{prop:Born}
Let $\alpha=\infty$. As $\norm{q}_{L^1}\to0$, for any $\lambda=k^{2}>0$ we have 
\begin{equation}
\frac{\dd\nu}{\dd\lambda}(\lambda)=
\frac{1}{2\pi}\left(k-\int_0^\infty \sin(2ky)\Re q(y)\dd y\right)+\calO(\norm{q}_{L^1}^2)
\label{eq:ba2}
\end{equation}
and
\begin{equation}
\psi(\lambda)=1+\frac{2\ii}k\int_0^\infty  \ee^{-ky} \sin(ky)\Im q(y)\dd y+\calO(\norm{q}_{L^1}^2).
\label{eq:ba3}
\end{equation}
\end{proposition}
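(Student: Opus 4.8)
The plan is to start from the closed-form expressions of Theorem~\ref{thm.5.3}(i) and substitute the first-order (in $q$) expansions of the relevant Jost solutions. In the Dirichlet case the boundary functionals reduce to $\ell_\alpha^\perp(f)=f(0)$ and $L_\alpha^\perp(F)=F(0)$ by \eqref{eq:nsa7} and \eqref{eq:nsa7v}. By Proposition~\ref{cr.e3}, $\lambda=k^2$ is an eigenvalue of $\bH$ if and only if $\newe(0,k)=0$; since $\newe(0,k)=1+\calO(\norm{q}_{L^1})$ (see below), for $\norm{q}_{L^1}$ small and $\lambda>0$ fixed this never happens, so formula \eqref{ee5} applies. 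Using \eqref{e50} to write $\oldF_{-1}(0,k)=(\overline{\newe(0,k)},-\newe(0,k))^{T}$, the entire task reduces to computing the two boundary vectors $\newe(0,k)$ and $\oldF_{+\ii}(0,k)$ modulo $\calO(\norm{q}_{L^1}^2)$.

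To obtain these I would write each Jost solution as the free solution $F^{(0)}$ plus a first-order correction $F^{(1)}$, and substitute into the linearised equation $-\epsilon (F^{(1)})''-k^2F^{(1)}=-QF^{(0)}$ coming from \eqref{e30}. Projecting onto the eigenbasis $e_\pm$ of $\epsilon$ (recall $\epsilon e_\pm=\pm e_\pm$) and writing $F^{(1)}=h_+e_++h_-e_-$ decouples this into a scalar \emph{oscillatory} equation $h_+''+k^2h_+=(\cdots)$ along $e_+$ and a scalar \emph{exponential} equation $h_-''-k^2h_-=(\cdots)$ along $e_-$, with right-hand sides determined by the $e_\pm$-components of $QF^{(0)}$, which one computes from $Qe_+=\Re q\,e_++\ii\,\Im q\,e_-$ and $Qe_-=-\ii\,\Im q\,e_+-\Re q\,e_-$ (using \eqref{eq:a2}). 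Imposing the Jost normalisation — each correction must be $\smallO$ of its leading exponential — selects explicit Green's functions for these scalar problems; evaluating at $x=0$ yields, after elementary manipulation,
\[
\newe(0,k)=1+\frac1k\int_0^\infty \ee^{-ky}\big(\sinh(ky)\,\Re q(y)+\ii\sin(ky)\,\Im q(y)\big)\,\dd y+\calO(\norm{q}_{L^1}^2),
\]
and, for the oscillatory solution, $\oldF_{+\ii}(0,k)=(1+h_+(0))e_++h_-(0)e_-$ with $\Re h_+(0)=\frac1{2k}\int_0^\infty\sin(2ky)\,\Re q(y)\,\dd y$. Substituting into \eqref{ee5} and expanding $\abs{\newe(0,k)}^2$, the determinant $\det\{\oldF_{+\ii}(0,k),\oldF_{-1}(0,k)\}=-2(1+\Re\newe^{(1)}(0,k)+h_+(0))+\calO(\norm{q}_{L^1}^2)$, and the resulting ratios to first order (in the determinant, $h_-(0)$ and $\Im\newe(0,k)$ contribute only at second order) then gives \eqref{eq:ba2} and \eqref{eq:ba3}.

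The main obstacle is the genuine coexistence of hyperbolic and elliptic modes, i.e. the non-Volterra structure flagged in the introduction, which appears when solving for $\oldF_{+\ii}^{(1)}$. First, its $e_-$-component solves the exponential equation with \emph{bounded oscillatory} forcing $\propto\ee^{\ii ky}$; the growing mode $\ee^{kx}$ forces the use of the two-sided bounded Green's function $-\tfrac1{2k}\ee^{-k\abs{x-y}}$ rather than a one-sided Volterra kernel, since the latter would produce a divergent integral of $\ee^{ky}$ against $L^1$ data. Second, the $e_+$-component has \emph{resonant} forcing $\Re q\,\ee^{\ii ky}$, so the resonant piece must be integrated from $+\infty$ to avoid secular $\sim x\,\ee^{\ii kx}$ growth and to respect the normalisation. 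Finally, since $\oldF_{+\ii}$ is only a solution class (Remark~2 after Theorem~\ref{thm.e1}), I must verify the determinant in \eqref{ee5} is insensitive to the chosen representative; this holds because changing the representative adds a multiple of $\oldF_{-1}(0,k)$ to the first column, i.e. a multiple of the second column, leaving the determinant unchanged. The quadratic control of the remainders follows from the integral-equation estimates underlying Theorem~\ref{thm.e1} in Section~\ref{sec.e}.
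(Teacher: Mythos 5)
Your proposal is correct and follows essentially the same route as the paper: expand the Jost solutions $\myF_{-1}$, $\myF_{+\ii}$ to first order in $q$ (you do this by solving the linearised ODE with the forced choice of Green kernels, which reproduces exactly the linearisation of the paper's integral equations \eqref{eq:e64}--\eqref{eq:e65}, including the two-sided kernel $\ee^{-k\abs{x-y}}/2k$ and the identities for $Qe_\pm$), and substitute into Theorem~\ref{thm.5.3}(i); your intermediate formulas for $\newe(0,k)$, $\Re h_+(0)$ and the determinant agree with the paper's \eqref{eq:ss1}, \eqref{eq:ss2}. The only difference is presentational: you work directly at $\alpha=\infty$, whereas the paper computes for general real $\alpha$ and recovers the Dirichlet case by letting $\alpha\to\infty$ in \eqref{eq:born_psi} and \eqref{eq:born_nu}.
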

For real $q$, formula \eqref{eq:ba2} reduces to \eqref{eq:ba1}, up to a normalisation factor of $1/2$. This is in agreement with \cite[Theorem~1.7]{PS3}, which states that $\dd\sigma(\lambda)=2\dd\nu(\lambda)$ for real $q$. We observe that \eqref{eq:ba3} depends only on the imaginary part of $q$ and contains the exponential factor $\ee^{-ky}$.

\section{Construction of the Jost solutions}\label{sec.e}

\subsection{Preliminaries. Sectors in the $k$-plane and Stokes lines}
The aim of this section is to prove Theorem~\ref{thm.e1}. Our proof does not rely on the special form \eqref{eq:a2} of the $2\times2$ matrix-valued potential $Q$ or even on $Q$ being Hermitian. We only need the integrability of $Q$, i.e.
\[
\int_0^\infty\norm{Q(x)}\dd x<\infty.
\]
Here and below, $\norm{\cdot}$ means the operator norm on the set of $2\times2$ matrices, i.e. these matrices are considered as linear operators on $\bbC^2$ with the Euclidean norm. 

Broadly speaking, we follow the standard strategy of converting the differential equation \eqref{e30} into an integral equation. However, we face the following challenge. Consider the solutions $\ee^{\pm kx}e_-$,  $\ee^{\pm \ii kx}e_+$ of the free equation ($Q=0$). The hierarchy of these solutions in terms of the rate of growth/decay is governed by the real parts of the four exponents $\pm k$, $\pm\ii k$. This hierarchy changes as $k$ crosses any of the four \emph{Stokes lines} 
\[
\Re k=0, \quad \Im k=0, \quad \Re k=\Im k, \quad \Re k=-\Im k
\]
in the complex plane. These Stokes lines split the $k$-plane into eight sectors, with a different hierarchy of Jost solutions in each of these sectors. As a~consequence, we have to set up our integral equations in a different way in each sector. In other words, the formula for the Green function for each of the four Jost solutions has to be adapted to the sector.

Thus, we have four Jost solutions and eight sectors in the complex plane; it is a challenge to describe all corresponding integral equations systematically. We denote
\[
\begin{aligned}
S_0&=\{k\in\bbC\setminus\{0\}: 0<\arg k<\pi/4\},
\\
\overline{S_0}&=\{k\in\bbC\setminus\{0\}: 0\leq\arg k\leq \pi/4\}.
\end{aligned}
\]
Our strategy consists of the following steps. 
\begin{itemize}
\item
Construct all four Jost solutions for $k$ in the open sector $S_0$. This has to be done differently for each of the four Jost solutions. 
\item
Consider the boundary Stokes lines of $S_0$ and establish the existence of Jost solutions in $\overline{S_0}$. 
\item
By using the transformation $k\mapsto\overline{k}$, extend this construction to the sector $-\pi/4\leq\arg k\leq \pi/4$. 
\item
By using the transformation $k\mapsto \ii k$, extend this construction to the sector $-\pi/4\leq\arg k\leq 3\pi/4$. 
\item
By using the transformation $k\mapsto -k$, extend this construction to all $k\not=0$. 
\end{itemize}

The key difficulty is the first step of the proof. The subtle aspect of it (compared to the standard case $\epsilon=1$) is that we are forced to work with the integral equations that are in general of Fredholm type but not of Volterra type (see \eqref{eq:e65}). This aspect makes our analysis structurally closer to scattering theory in dimension $d\geq2$ than to the standard one-dimensional theory. We will follow the strategy of \cite{YaO}, which is based on an idea from the book \cite{CodLev}, see Problem~29 of Chapter~3 therein.

\subsection{Setting up integral equations in $\overline{S_0}$}
For $k\in\overline{S_0}$ we define the components of Green's functions as follows:
\begin{align*}
A(x)&=\frac{\ee^{-k\abs{x}}}{2k}P_-,
&&A_{\pm}(x)=\mp\frac{\sinh(kx)}{k}\chi_{\pm}(x)P_-,
\\
B(x)&=\frac{\ee^{\ii k\abs{x}}}{2\ii k}P_+,
&&B_{\pm}(x)=\pm\frac{\sin(kx)}k\chi_{\pm}(x)P_+,
\end{align*}
where $\chi_+$ (resp. $\chi_-$) is the characteristic function of the positive (resp. negative) semi-axis and $P_\pm$ are the projections defined in \eqref{eq:proj}. Next, we define 
\begin{equation}
\begin{aligned}
L_{-1}(x)&=A_-(x)+B_-(x), 
\\
L_{+1}(x)&=A(x)+B_+(x),
\\
L_{+\ii}(x)&=A(x)+B_-(x), 
\\
L_{-\ii}(x)&=A(x)+B(x).
\end{aligned}
\label{eq:e58}
\end{equation}
We fix an additional parameter $r\geq0$, whose purpose will be explained later. Let us set up the integral equations for the Jost solutions. These are integral equations on the interval $[r,\infty)$; since we are mostly interested in the asymptotic behaviour of the Jost solutions at infinity, restricting $x$ to $[r,\infty)$ is not a serious constraint. 
We will be looking for solutions ${\myF}_{j}$, $j\in\{\pm1,\pm\ii\}$,  to the integral equations
\begin{align}
{\myF}_{\pm1}(x)&= \ee^{\pm kx} e_- + \int_{r}^{\infty} L_{\pm1}(x-y) Q(y) {\myF}_{\pm1}(y) \dd y,\quad x\geq r, 
\label{eq:t1}
\\
{\myF}_{\pm\ii}(x)&= \ee^{\pm\ii kx} e_+ + \int_{r}^{\infty} L_{\pm\ii}(x-y) Q(y) {\myF}_{\pm\ii}(y) \dd y, 
\quad x\geq r,
\label{eq:t2}
\end{align}
where the vectors $e_{\pm}$ are defined in~\eqref{eq:a4aa}. If necessary, we will indicate the dependence on $k$ and $r$ by writing ${\myF}_{j}(x,k)$ or ${\myF}_{j}(x,k;r)$.

\begin{remark*}$\,$ \nopagebreak
\begin{enumerate}[1.]
\item
In the classical scalar self-adjoint case (i.e. for the Jost solution \eqref{eq:sa4}) the integration in the corresponding integral equation is from $x$ to infinity. 
\item
We will prove the existence of solutions ${\myF}_{j}$ to the integral equations \eqref{eq:t1}, \eqref{eq:t2} for $k\in\overline{S_0}$ and $r$ sufficiently large. The purpose of the parameter $r$ is to make the norm of the integral operator in the right-hand side smaller than one to ensure the convergence of the Neumann series. This will be explained precisely below. 
\item
Even though \eqref{eq:t1}, \eqref{eq:t2} are integral equations on $[r,\infty)$, we will soon see that all $\myF_{j}$ satisfy the differential equation \eqref{e30}. Thus, $\myF_{j}$ can be uniquely extended to $[0,\infty)$. 
\item
We will show that ${\myF}_{j}\in\oldF_{j}$ (i.e. ${\myF}_{j}$ satisfy the asymptotics \eqref{eq:F-1}--\eqref{eq:F+i-prime}) in the open sector $S_0$. On the boundary Stokes rays $k>0$ and $\arg k=\pi/4$ the situation is more complicated. In general, ${\myF}_{j}$ is a linear combination of a solution from the class $\oldF_{j}$ and some other solution.  However, we mostly care about the solutions with $j=-1$ and $j=+\ii$ (these form the basis of the space of all exponentially decaying solutions for $k\in S_0$, and these enter our main Theorem~\ref{thm.5.3}) when $k$ approaches the real line from above, and the good news is that for $k>0$, we will prove that $\myF_{-1}\in\oldF_{-1}$ and $\myF_{+\ii}\in\oldF_{+\ii}$. 
\end{enumerate}
\end{remark*}

\subsection{The choice of parameters $r\geq0$ and $\delta>0$ for the Neumann series}
In what follows, we fix $\delta>0$ and choose $r\geq0$ sufficiently large so that
\begin{equation}
\frac{2}{\delta} \int_{r}^{\infty} \norm{Q(y)} \dd y < \frac{1}{2}.
\label{e38}
\end{equation}
The constant $1/2$ on the right-hand side can be replaced by any positive number $<1$.
The expression on the left-hand side of \eqref{e38} is the upper bound on the norm of the integral operators in \eqref{eq:t1}, \eqref{eq:t2} for $\abs{k}>\delta$, after a suitable weighting (see \eqref{eq:t4}, \eqref{eq:t5}).
We will be able to establish the existence and uniqueness of solutions to \eqref{eq:t1} and \eqref{eq:t2} for $k\in\overline{S_0}$ and $\abs{k}>\delta$, as convergent Neumann series. Let us denote
\begin{align*}
S_\delta&=\{k\in\bbC: 0<\arg k<\pi/4, \quad \abs{k}>\delta\},
\\
\overline{S_\delta}&=\{k\in\bbC: 0\leq\arg k\leq\pi/4, \quad \abs{k}\geq\delta\}.
\end{align*}

\subsection{$\oldF_{j}$ and $\myF_{j}$: classes and representatives for Jost solutions}

We summarise our notational conventions. As already discussed, $\oldF_{j}(x,k)$ denote \emph{classes} of solutions satisfying the asymptotics stated in Theorem~\ref{thm.e1}. 

We denote by $\myF_{j}(x,k;r)$ the \emph{uniquely defined} solution of the corresponding integral equation \eqref{eq:t1}--\eqref{eq:t2}. The solution $\myF_{j}(x,k;r)$ in general depends on $r$. 

\begin{remark*}
For $k\in\overline{S_0}$, the integral equation \eqref{eq:t1} for $\myF_{-1}(x,k;r)$ is Volterra. It follows that for this range of $k$, the solution $\myF_{-1}(x,k;r)$ is actually independent of the choice of $r$. This aligns with the fact that for this range of $k$, the class of solutions $\oldF_{-1}$ is a one-point set. 
\end{remark*}

\subsection{Existence of Jost solutions in $S_0$}
It will be convenient to define the auxiliary functions $w_{\pm1}$ and $w_{\pm\ii}$ by 
\begin{equation}
{\myF}_{\pm1}(x)=\ee^{\pm kx}w_{\pm1}(x), 
\quad
{\myF}_{\pm\ii}(x)=\ee^{\pm\ii kx}w_{\pm\ii}(x), 
\label{eq:t3}
\end{equation}
and the corresponding kernels $K_{\pm1}$ and $K_{\pm\ii}$ by 
\[
\begin{aligned}
K_{-1}(x)&=\ee^{kx}L_{-1}(x)=\ee^{kx}(A_-(x)+B_-(x)),
\\
K_{+1}(x)&=\ee^{-kx}L_{+1}(x)=\ee^{-kx}(A(x)+B_+(x)),
\\
K_{+\ii}(x)&=\ee^{-\ii kx}L_{+\ii}(x)=\ee^{-\ii kx}(A(x)+B_-(x)),
\\
K_{-\ii}(x)&=\ee^{\ii kx}L_{-\ii}(x)=\ee^{\ii kx}(A(x)+B(x)).
\end{aligned}
\]
For brevity, we suppress the dependence of $w_{j}$ and $K_{j}$ on $k$ and $r$.

The integral equations \eqref{eq:t1}--\eqref{eq:t2} for $\myF_{j}$ are equivalent to the integral equations 
\begin{align}
w_{\pm1}(x)&= e_- + \int_{r}^{\infty} K_{\pm1}(x-y) Q(y) w_{\pm1}(y) \dd y, 
\label{eq:t4}
\\
w_{\pm\ii}(x)&= e_+ + \int_{r}^{\infty} K_{\pm\ii}(x-y) Q(y) w_{\pm\ii}(y) \dd y.
\label{eq:t5}
\end{align}
In the rest of this section, we prove the existence of solutions $w_{j}$ to these integral equations.
We denote $x_+=\max\{x,0\}$ and $K_{j}'=\frac{\partial}{\partial x}K_{j}$. 
\begin{lemma}\label{lma:t1}
For all indices $j\in\{\pm1,\pm\ii\}$, all $x\in\bbR$ and all $k\in\overline{S_0}$, we have the estimates
\begin{equation}
\norm{K_{j}(x)}\leq \frac2{\abs{k}}
\quad\mbox{ and }\quad
\norm{K_{j}'(x)}\leq 4.
\label{e45}
\end{equation}
If $k\in S_0$, we have the estimate 
\begin{equation}
\norm{K_{j}(x)}+\norm{K_{j}'(x)}\leq C\ee^{-d x_+},
\label{eq:t8}
\end{equation}
where $C=C(k)$ and $d=d(k)$ are positive constants independent of $j$ and $x$.
\end{lemma}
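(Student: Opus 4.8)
The plan is to reduce both estimates in \eqref{e45} to elementary sign conditions on the exponents that appear in the kernels, parametrised by the sector. Write $k=a+\ii b$; then the condition $k\in\overline{S_0}$ is precisely $0\le b\le a$ (with $a>0$), and $k\in S_0$ is $0<b<a$. Since the projections $P_\pm$ from \eqref{eq:proj} are orthogonal with $\norm{P_\pm}=1$, and each kernel $K_j$ splits as $c_-(x)P_-+c_+(x)P_+$ — the pieces $A,A_\pm$ carrying $P_-$ and the pieces $B,B_\pm$ carrying $P_+$ — it suffices to bound the scalar coefficients $c_\pm$ and their derivatives, using $\norm{K_j(x)}\le\abs{c_-(x)}+\abs{c_+(x)}$ and likewise for $K_j'$.

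First I would expand $\sinh(kx)$, $\sin(kx)$, $\ee^{-k\abs{x}}$ and $\ee^{\ii k\abs{x}}$ into pure exponentials and multiply by the prefactor $\ee^{\mp kx}$ or $\ee^{\mp\ii kx}$ built into the definition of $K_j$. On each half-line every coefficient $c_\pm(x)$ then becomes a sum of at most two terms of the form $\frac{\zeta}{2k}\ee^{\mu x}$ with $\abs{\zeta}=1$ and $\mu\in\{0,\pm2k,\pm2\ii k,\pm(1\pm\ii)k\}$, so that $\abs{\mu}\le 2\abs{k}$ in all cases. The crux is that, on the support of each term (dictated by the factors $\chi_\pm$ in \eqref{eq:e58}), one has $\Re(\mu)\,x\le 0$, hence $\abs{\ee^{\mu x}}\le1$. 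These sign facts are immediate from $0\le b\le a$: for instance $\Re((1+\ii)k)=a-b\ge0$ and $\Re((1-\ii)k)=a+b\ge0$ handle the $\sin(kx)$-contributions on $x\le0$, while $\Re(k)=a\ge0$ and $\Re(\ii k)=-b\le0$ control the $\ee^{-k\abs{x}}$ and $\ee^{\ii k\abs{x}}$ pieces on each half-line. With $\abs{\ee^{\mu x}}\le1$ this yields $\abs{c_\pm(x)}\le 1/\abs{k}$, whence $\norm{K_j(x)}\le 2/\abs{k}$. For the derivative, differentiation multiplies each term by $\mu$, and $\abs{\mu}/(2\abs{k})\le1$ gives $\abs{c_\pm'(x)}\le2$ (two terms) and hence $\norm{K_j'(x)}\le4$; at $x=0$ the bound is read off from the one-sided derivatives, the only non-smoothness being the kink of $\abs{x}$.

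For the decay estimate \eqref{eq:t8} with $k\in S_0$ the inequalities become strict, so every exponent $\mu$ that is active as $x\to+\infty$ satisfies $\Re(\mu)<0$. Tracking the supports, the three full-line kernels decay at rate $a-b$ (for $K_{+1}$, $K_{+\ii}$) or $2b$ (for $K_{-\ii}$), so $d=\min(a-b,2b)>0$ works uniformly in $j$, and $K_j'$ inherits the same decay since differentiation only inserts the bounded factor $\mu$. On $x\le0$ we have $x_+=0$, and the bounds already established in \eqref{e45} give $\norm{K_j(x)}+\norm{K_j'(x)}\le 2/\abs{k}+4=:C$, which matches $C\ee^{-d x_+}$ there.

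The main obstacle is not any single estimate but the systematic bookkeeping: for each of the four kernels and on each half-line one must record exactly which exponentials occur. The one genuinely delicate point for \eqref{eq:t8} is that the non-decaying contributions — the constant term $1/(2\ii k)$ inside $K_{+\ii}$, and the collapse of $A$, $B$ to the constants $\tfrac{1}{2k}P_-$, $\tfrac{1}{2\ii k}P_+$ against their prefactors — all sit on $x\le0$ (via $\chi_-$, or via $\ee^{\pm k\abs{x}}$, $\ee^{\ii k\abs{x}}$ becoming constant for $x<0$). Verifying that no such constant piece survives on $x>0$ is precisely what makes the exponential decay possible; once the supports are correctly identified, each remaining estimate follows at once from the signs of $a\pm b$ and $b$.
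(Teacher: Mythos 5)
Your proposal is correct and follows essentially the same argument as the paper: both reduce the estimates to checking that the real parts of the exponents $\mu$ appearing in each kernel are non-positive on the relevant half-line for $k\in\overline{S_0}$ (strictly negative on $x>0$ for $k\in S_0$), the paper organising this bookkeeping in four tables where you enumerate the exponents directly. Your explicit identifications $d=\min(a-b,2b)$ and $C=2/\abs{k}+4$ are consistent with, and slightly more quantitative than, the paper's unspecified constants $C(k)$, $d(k)$.
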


\begin{proof}
We denote 
\[
k=a+\ii b,
\]
where $0\leq b\leq a$ in our sector $\overline{S_0}$. Each of the kernels $K_{j}$ is a sum of several exponential terms. The proof will follow if we check that the real parts of all exponents are (i) non-positive for $k\in\overline{S_0}$, $x\in\bbR$ and (ii) strictly negative for $k\in S_0$, $x>0$. We display the results in the form of four tables (one of each index $j$) for the ease of checking.  

\textbf{Kernel $K_{-1}$:}
\[
K_{-1}(x) =\frac1k \ee^{kx} \left( \sinh(kx) P_- - \sin(kx) P_+ \right) \chi_-(x).
\]

\begin{center}
    \renewcommand{\arraystretch}{1.2} 
    \begin{tabular}{|c|c|c|c|}
    \hline
    \textbf{Component} & \textbf{Domain of $x$} & \textbf{Total Exponent} & \textbf{Real Part ($\le 0$)} \\ \hline
    $P_-$-term ($A_-$) & $x < 0$ & \alignx{(k \pm k)} & \alignx{(a \pm a)} \\ \hline
    $P_+$-term ($B_-$) & $x < 0$ & \alignx{(k \pm \ii k)} & \alignx{(a \mp b)} \\ \hline
    \end{tabular}
\end{center}
\smallskip

\textbf{Kernel $K_{+\ii}$:}
\begin{align*}
K_{+\ii}(x)&= 
\frac1{2k}\ee^{-\ii kx}\left(\ee^{-k\abs{x}}P_--2\sin(kx)P_+\chi_-(x)\right)
\\
&=
\begin{cases} 
\frac1{2k}{\ee^{-(k+\ii k)x}} P_- & \text{for } x > 0,
\\[4pt]
\frac{1}{2k}{\ee^{(k-\ii k)x}} P_- - \frac1{k}{\ee^{-\ii kx}\sin(kx)} P_+ & \text{for } x < 0.
\end{cases}
\end{align*}

\begin{center}
    \renewcommand{\arraystretch}{1.2}
    \begin{tabular}{|c|c|c|c|}
    \hline
    \textbf{Component} & \textbf{Domain of $x$} & \textbf{Total Exponent} & \textbf{Real Part ($\le 0$)} \\ \hline
    $P_-$-term ($A$) & $x > 0$ & \alignx{-(k+\ii k)} & \alignx{-(a-b)} \\ \cline{2-4}
                     & $x < 0$ & \alignx{(k-\ii k)} & \alignx{(a+b)} \\ \hline
    $P_+$-term ($B_-$) & $x < 0$ & \alignx{(-\ii k\pm\ii k)} & \alignx{(b\mp b)} \\ \hline
    \end{tabular}
\end{center}
\smallskip
    
\textbf{Kernel $K_{-\ii}$:}
\begin{align*}
K_{-\ii}(x)&=\frac1{2k}\ee^{\ii kx}\left(\ee^{-k\abs{x}}P_--\ii e^{\ii k\abs{x}}P_+\right)
\\
&= 
\begin{cases} 
\frac{1}{2k}\left(\ee^{-(k-\ii k)x} P_- -\ii \ee^{2\ii kx} P_+\right) & \text{for } x > 0,
\\[4pt]
\frac{1}{2k}\left(\ee^{(k+\ii k)x} P_- -\ii P_+\right) & \text{for } x < 0.
\end{cases}
\end{align*}

\begin{center}
    \renewcommand{\arraystretch}{1.2}
    \begin{tabular}{|c|c|c|c|}
    \hline
    \textbf{Component} & \textbf{Domain of $x$} & \textbf{Total Exponent} & \textbf{Real Part ($\le 0$)} \\ \hline
    $P_-$-term ($A$) & $x > 0$ & \alignx{-(k-\ii k)} & \alignx{-(a+b)} \\ \cline{2-4}
                     & $x < 0$ & \alignx{(k+\ii k)} & \alignx{(a-b)} \\ \hline
    $P_+$-term ($B$) & $x > 0$ & \alignx{2\ii k} & \alignx{-2b} \\ \cline{2-4}
                     & $x < 0$ & \makebox[2.2cm][r]{}\phantom{:}{0} & \makebox[2.2cm][r]{}\phantom{:}{0} \\ \hline
    \end{tabular}
\end{center}
\smallskip    
    
\textbf{Kernel $K_{+1}$:}    
\begin{align*}
K_{+1}(x)&=\frac1{2k}\ee^{-kx}\left(\ee^{-k\abs{x}}P_-+2\sin(kx)P_+\chi_+(x)\right)
\\    
&= 
\begin{cases}\frac1{2k}\ee^{-2kx} P_- + \frac1k\ee^{-kx}\sin(kx) P_+ & \text{for } x > 0,
\\[4pt]
\frac{1}{2k} P_- & \text{for } x < 0.
\end{cases}
\end{align*}

\begin{center}
    \renewcommand{\arraystretch}{1.2}
    \begin{tabular}{|c|c|c|c|}
    \hline
    \textbf{Component} & \textbf{Domain of $x$} & \textbf{Total Exponent} & \textbf{Real Part ($\le 0$)} \\ \hline
    $P_-$-term ($A$) & $x > 0$ & \alignx{-2k} & \alignx{-2a} \\ \cline{2-4}
                     & $x < 0$ & \makebox[2.2cm][r]{}\phantom{:}{0} & \makebox[2.2cm][r]{}\phantom{:}{0} \\ \hline
    $P_+$-term ($B_+$) & $x > 0$ & \alignx{-(k \pm \ii k)} & \alignx{-(a \mp b)} \\ \hline
    \end{tabular}
\end{center}
\smallskip

The proof follows by inspection of all the four tables above.
\end{proof}

\begin{lemma}
Let $\delta>0$ and $r\geq0$ satisfy \eqref{e38}. Then for any $k\in\overline{S_\delta}$, 
the integral equations \eqref{eq:t4} (resp. \eqref{eq:t5}) have unique solutions $w_{\pm1}$ (resp. $w_{\pm\ii}$) in $L^\infty((r,\infty);\bbC^2)$.
\end{lemma}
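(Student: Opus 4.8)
The plan is to set up each integral equation \eqref{eq:t4}, \eqref{eq:t5} as a fixed-point problem in the Banach space $L^\infty((r,\infty);\bbC^2)$ and apply the contraction mapping principle (equivalently, show that the associated integral operator has norm strictly less than one, so the Neumann series converges). First I would define, for each $j\in\{\pm1,\pm\ii\}$, the integral operator
\[
(T_j w)(x)=\int_r^\infty K_j(x-y)Q(y)w(y)\,\dd y, \quad x\geq r,
\]
so that \eqref{eq:t4}, \eqref{eq:t5} read $w_j=e_\pm+T_j w_j$. The existence and uniqueness of a solution in $L^\infty((r,\infty);\bbC^2)$ then follows once I show $\norm{T_j}<1$ as an operator on this space.

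The key estimate is the first bound in \eqref{e45} from Lemma~\ref{lma:t1}, namely $\norm{K_j(x)}\leq 2/\abs{k}$ for all $x\in\bbR$ and $k\in\overline{S_0}$. Using this, for $w\in L^\infty((r,\infty);\bbC^2)$ I would estimate pointwise
\[
\norm{(T_j w)(x)}\leq\int_r^\infty\norm{K_j(x-y)}\,\norm{Q(y)}\,\norm{w(y)}\,\dd y
\leq\frac{2}{\abs{k}}\left(\int_r^\infty\norm{Q(y)}\,\dd y\right)\norm{w}_\infty.
\]
Taking the supremum over $x\geq r$ gives
\[
\norm{T_j w}_\infty\leq\frac{2}{\abs{k}}\left(\int_r^\infty\norm{Q(y)}\,\dd y\right)\norm{w}_\infty,
\]
so that $\norm{T_j}\leq\frac{2}{\abs{k}}\int_r^\infty\norm{Q(y)}\,\dd y$. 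For $k\in\overline{S_\delta}$ we have $\abs{k}\geq\delta$, whence $\frac{2}{\abs{k}}\leq\frac{2}{\delta}$, and the choice of $r$ in \eqref{e38} yields $\norm{T_j}\leq\frac{2}{\delta}\int_r^\infty\norm{Q(y)}\,\dd y<\tfrac12<1$.

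Since $\norm{T_j}<1$, the operator $I-T_j$ is boundedly invertible on $L^\infty((r,\infty);\bbC^2)$, so the equation $w_j=e_\pm+T_jw_j$ has the unique solution $w_j=(I-T_j)^{-1}e_\pm=\sum_{n=0}^\infty T_j^n e_\pm$, the Neumann series converging absolutely in $L^\infty$. This proves existence and uniqueness for each of the four indices simultaneously, since the bound \eqref{e45} is uniform in $j$. The only mildly delicate points are purely routine: checking that $T_j$ indeed maps $L^\infty((r,\infty);\bbC^2)$ into itself (measurability and boundedness of $T_jw$, which follow from the same estimate together with the continuity of $K_j$), and confirming that $e_\pm\in L^\infty((r,\infty);\bbC^2)$ as a constant vector-function. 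I do not expect any genuine obstacle here; the entire content of the lemma is packaged into the uniform kernel bound \eqref{e45} and the smallness condition \eqref{e38}, both already established. The main work has been front-loaded into Lemma~\ref{lma:t1}, so this lemma is essentially its corollary.
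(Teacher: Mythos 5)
Your proposal is correct and follows essentially the same route as the paper: both estimate the $L^\infty\to L^\infty$ norm of the integral operator via the uniform kernel bound \eqref{e45} combined with the smallness condition \eqref{e38}, obtain a bound $<\tfrac12$, and conclude existence and uniqueness from the convergent Neumann series. The only cosmetic difference is that the paper writes out the argument for $j=\pm1$ and notes it applies verbatim to $j=\pm\ii$, whereas you treat all four indices at once.
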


\begin{proof}
For definiteness, consider \eqref{eq:t4}. Let $\calK_{\pm1}$ be the integral operator acting on $w_{\pm1}$ in the right-hand side of \eqref{eq:t4}; then this integral equation can be written as
\begin{equation}
w_{\pm1}=e_-+\calK_{\pm1}w_{\pm1}.
\label{eq:t6}
\end{equation}
Using \eqref{e45} and \eqref{e38}, we estimate the norm of this operator 
\[
\norm{\calK_{\pm1}}_{L^\infty\to L^\infty}
=
\sup_{x\geq r}\int_r^\infty \norm{K_{\pm1}(x-y) Q(y)}\dd y
\]
as
\begin{equation}
\norm{\calK_{\pm1}}_{L^\infty\to L^\infty}
\leq
\frac2{\abs{k}}\int_r^\infty\norm{Q(y)}\dd y
\leq 
\frac2{\delta}\int_r^\infty\norm{Q(y)}\dd y
<\frac12,
\label{e57}
\end{equation}
and so the integral equation \eqref{eq:t6} has a unique solution as the Neumann series
\begin{equation}
w_{\pm1}=\sum_{m=0}^\infty \calK_{\pm1}^m e_{-} , 
\label{eq:ns}
\end{equation}
convergent uniformly in $k\in\overline{S_\delta}$.
\end{proof}

Let us now define ${\myF}_{j}={\myF}_{j}(x,k;r)$ by \eqref{eq:t3}, where $k\in\overline{S_\delta}$ and the solutions $w_{j}$ are as in the previous lemma. By construction, ${\myF}_j$ satisfy the integral equations \eqref{eq:t1} and \eqref{eq:t2}. The next (standard) lemma shows that ${\myF}_{j}$ also satisfy the differential equation \eqref{e30}. 

\begin{lemma}
For all indices $j\in\{\pm1,\pm\ii\}$, the kernels $L_j$ defined by~\eqref{eq:e58} satisfy
\[
\left(-\epsilon \frac{\dd^2}{\dd x^2} - k^2\right) L_{j}(x) = -\delta(x),
\]
where $\delta$ is the delta-function. Thus, the solutions ${\myF}_{j}$ to the integral equations  \eqref{eq:t1} and \eqref{eq:t2} satisfy the differential equation \eqref{e30} on $(r,\infty)$. 
\end{lemma}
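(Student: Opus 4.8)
The plan is to exploit the spectral decomposition of the Pauli matrix $\epsilon$ in order to decouple the matrix operator into two scalar constant-coefficient operators, and then to recognise each $L_j$ as a sum of two elementary fundamental solutions. Since $\epsilon e_\pm=\pm e_\pm$, the projections of \eqref{eq:proj} satisfy $\epsilon P_\pm=\pm P_\pm$, so on the range of $P_-$ the operator $-\epsilon\frac{\dd^2}{\dd x^2}-k^2$ acts as the scalar operator $\frac{\dd^2}{\dd x^2}-k^2$, while on the range of $P_+$ it acts as $-\frac{\dd^2}{\dd x^2}-k^2$. The crucial structural observation is that each of the four kernels in \eqref{eq:e58} is the sum of exactly one $P_-$-valued term (one of $A$, $A_\pm$) and exactly one $P_+$-valued term (one of $B$, $B_\pm$). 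Consequently it suffices to verify that every scalar coefficient of a $P_-$-term is a fundamental solution of $\frac{\dd^2}{\dd x^2}-k^2$ with source $-\delta$, and every scalar coefficient of a $P_+$-term is a fundamental solution of $-\frac{\dd^2}{\dd x^2}-k^2$ with source $-\delta$.

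First I would carry out the two distributional computations. On the $P_-$ channel, the functions $\frac{\ee^{-k|x|}}{2k}$ and $\mp\frac{\sinh(kx)}{k}\chi_\pm(x)$ are all continuous at the origin and have a jump of $-1$ in their first derivative there (this is immediate from $\partial_x|x|=\sign x$ and $\partial_x\chi_\pm=\pm\delta$, using $\sinh 0=0$ to kill the spurious boundary term), which produces precisely the source $-\delta$ under $\frac{\dd^2}{\dd x^2}-k^2$; away from $x=0$ each is a classical solution of the homogeneous equation. A parallel computation on the $P_+$ channel shows that $\frac{\ee^{\ii k|x|}}{2\ii k}$ and $\pm\frac{\sin(kx)}{k}\chi_\pm(x)$ are continuous with a jump of $+1$ in their first derivative, yielding $-\delta$ under $-\frac{\dd^2}{\dd x^2}-k^2$. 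A convenient shortcut is to note that within each channel the candidate functions differ only by solutions of the relevant homogeneous equation ($\ee^{\pm kx}$, respectively $\ee^{\pm\ii kx}$), so a single verification per channel suffices.

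Assembling the two channels gives
\[
\left(-\epsilon\frac{\dd^2}{\dd x^2}-k^2\right)L_j=-\delta\,P_--\delta\,P_+=-\delta(P_-+P_+)=-\delta\, I,
\]
which is the claimed identity (the right-hand side $-\delta(x)$ being read as $-\delta(x)I$). To deduce the differential equation for the Jost solutions, I would apply $-\epsilon\frac{\dd^2}{\dd x^2}-k^2$ to the integral equations \eqref{eq:t1}, \eqref{eq:t2}. The free terms $\ee^{\pm kx}e_-$ and $\ee^{\pm\ii kx}e_+$ lie in the kernel of this operator (a one-line check using $\epsilon e_\pm=\pm e_\pm$), and differentiating the convolution-type integral twice and invoking the fundamental-solution identity collapses it to $-\int_r^\infty\delta(x-y)Q(y)\myF_j(y)\,\dd y=-Q(x)\myF_j(x)$ for $x\in(r,\infty)$, the point $y=x$ lying in the interior of the integration range. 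This yields $-\epsilon\myF_j''+Q\myF_j=k^2\myF_j$, i.e.\ \eqref{e30}.

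The computation is routine, and the only point demanding genuine care is the distributional bookkeeping at $x=0$: one must confirm that the jumps in the first derivatives are consistent with the sign of the channel operator so that exactly $-\delta$ (rather than $+\delta$ or a multiple) is produced, and one must justify differentiating under the integral sign twice. The latter is the standard step turning the integral equation into an ODE, and it is legitimate because $L_j$ is continuous with a first derivative of bounded variation, so that $\myF_j$ is $C^1$ with an absolutely continuous derivative; I do not expect any real obstacle beyond this careful accounting.
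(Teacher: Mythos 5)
Your proposal is correct and takes essentially the same approach as the paper: the paper's proof consists precisely of checking that each component kernel satisfies $-\epsilon A''-k^2A=-\delta P_-$, $-\epsilon A_\pm''-k^2A_\pm=-\delta P_-$, $-\epsilon B''-k^2B=-\delta P_+$, $-\epsilon B_\pm''-k^2B_\pm=-\delta P_+$, using exactly your observation that $\epsilon P_\pm=\pm P_\pm$, and then summing the two channels since each $L_j$ contains one term of each type. You have merely filled in the distributional bookkeeping at $x=0$ and the routine step of applying the operator to the integral equation, both of which the paper leaves to the reader.
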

\begin{proof}
Follows directly by checking the relations
\begin{align*}
-\epsilon A''(x)-k^2 A(x) &= -\delta(x)P_-,
\quad
-\epsilon A_\pm''(x)-k^2 A_\pm(x) = -\delta(x)P_-,
\\
-\epsilon B''(x)-k^2 B(x) &= -\delta(x)P_+,
\quad
-\epsilon B_\pm''(x)-k^2 B_\pm(x) = -\delta(x)P_+.
\end{align*}
Here it is important that $\epsilon P_+=P_+$ and $\epsilon P_-=-P_-$. 
\end{proof}

Since the functions ${\myF}_{j}$ satisfy the differential equation \eqref{e30} on $(r,\infty)$, we will extend them to the whole of $[0,\infty)$ as solutions to the same differential equation. 

\subsection{The asymptotics of solutions $\myF_{j}$ at infinity}

First for the ease of reference we display an elementary lemma. 
\begin{lemma}\label{lma:t4}
Let $d>0$. Then 
\[
\lim_{x\to\infty}\int_0^\infty \ee^{-d(x-y)_+}\norm{Q(y)}\dd y=0.
\]
\end{lemma}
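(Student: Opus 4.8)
The plan is to apply the dominated convergence theorem. First I would observe that for each fixed $y\geq0$, the integrand $\ee^{-d(x-y)_+}\norm{Q(y)}$ tends to $0$ as $x\to\infty$: once $x>y$ we have $(x-y)_+=x-y\to\infty$, so $\ee^{-d(x-y)_+}\to0$ while $\norm{Q(y)}$ stays fixed. Next, the integrand is dominated, uniformly in $x$, by $\norm{Q(y)}$, since $\ee^{-d(x-y)_+}\leq1$ for all $x,y\geq0$. Because $q\in L^1(\bbR_+)$ forces $Q$ to be integrable, the dominating function $\norm{Q(\cdot)}$ lies in $L^1(\bbR_+)$, and the dominated convergence theorem—applied along an arbitrary sequence $x_n\to\infty$ and then used to conclude for the continuous limit—yields the claimed vanishing.

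Alternatively, one can argue by hand, which has the advantage of making the decay transparent. I would split $\int_0^\infty=\int_0^{x/2}+\int_{x/2}^\infty$. On $[0,x/2]$ one has $x-y\geq x/2$, hence $\ee^{-d(x-y)_+}\leq\ee^{-dx/2}$, so this portion is bounded by $\ee^{-dx/2}\int_0^\infty\norm{Q(y)}\dd y\to0$. On $[x/2,\infty)$ the trivial bound $\ee^{-d(x-y)_+}\leq1$ gives the estimate $\int_{x/2}^\infty\norm{Q(y)}\dd y$, which tends to $0$ as the tail of a convergent integral. Adding the two contributions gives the result without invoking any abstract convergence theorem.

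There is no genuine obstacle here: the statement is elementary and both routes are entirely routine. The only point requiring a flicker of care is the transition from the continuous limit $x\to\infty$ to the sequential hypotheses of the dominated convergence theorem, which is dispatched by testing against an arbitrary sequence $x_n\to\infty$; the direct splitting argument sidesteps even this. I would most likely present the splitting version, since it is self-contained and quantitative.
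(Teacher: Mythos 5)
Your proposal is correct, and the splitting argument you say you would present---decomposing $\int_0^\infty$ as $\int_0^{x/2}+\int_{x/2}^\infty$, bounding the first piece by $\ee^{-dx/2}\int_0^\infty\norm{Q(y)}\,\dd y$ and the second by the tail $\int_{x/2}^\infty\norm{Q(y)}\,\dd y$---is exactly the paper's proof. The dominated convergence alternative is also valid, but the paper uses the elementary splitting route you favour.
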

\begin{proof} 
Split the domain of integration into $y\leq x/2$ and $y>x/2$ and estimate
\begin{align*}
\int_0^{x/2}\ee^{-d(x-y)_+}\norm{Q(y)}\dd y&\leq \ee^{-d x/2}\int_{0}^\infty \norm{Q(y)} \dd y, 
\\
\int_{x/2}^\infty \ee^{-d (x-y)_+}\norm{Q(y)}\dd y&\leq \int_{x/2}^\infty\norm{Q(y)}\dd y.
\qedhere
\end{align*}
\end{proof}

\begin{lemma}\label{lma:t5}
Let $\delta>0$ and $r\geq0$ satisfy \eqref{e38}. Then for any $k\in S_\delta$ and all indices $j\in\{\pm1,\pm\ii\}$, the solutions $\myF_{j}$ satisfy the asymptotics \eqref{eq:F-1}--\eqref{eq:F+i-prime}. 
In the notation of Theorem~\ref{thm.e1}, we have $\myF_{j}(\cdot,k;r)\in\oldF_{j}(\cdot,k)$.
\end{lemma}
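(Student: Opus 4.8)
The plan is to show that the solutions $\myF_j$ constructed via the Neumann series \eqref{eq:ns} have the prescribed asymptotics \eqref{eq:F-1}--\eqref{eq:F+i-prime}. By the substitution \eqref{eq:t3}, it suffices to prove that the auxiliary functions $w_j$ satisfy $w_{\pm1}(x)\to e_-$ and $w_{\pm\ii}(x)\to e_+$ as $x\to\infty$, together with the matching statements for the derivatives of $\myF_j$. The key tool is the estimate \eqref{eq:t8} from Lemma~\ref{lma:t1}, which provides the exponential decay $\norm{K_j(x)}+\norm{K_j'(x)}\leq C\ee^{-dx_+}$ valid precisely in the open sector $S_0$ (hence for $k\in S_\delta$), together with the elementary Lemma~\ref{lma:t4}.

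First I would analyse the integral term in \eqref{eq:t4} (and similarly \eqref{eq:t5}). Writing $w_{\pm1}=e_-+\calK_{\pm1}w_{\pm1}$, I would show that the integral term tends to zero as $x\to\infty$. Since the Neumann series \eqref{eq:ns} converges uniformly in $x\geq r$, the function $w_{\pm1}$ is bounded, say $\norm{w_{\pm1}}_{L^\infty}\leq 2$. Then, using \eqref{eq:t8},
\[
\Norm{\int_r^\infty K_{\pm1}(x-y)Q(y)w_{\pm1}(y)\dd y}
\leq 2C\int_r^\infty \ee^{-d(x-y)_+}\norm{Q(y)}\dd y,
\]
and Lemma~\ref{lma:t4} shows the right-hand side vanishes as $x\to\infty$. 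This gives $w_{\pm1}(x)\to e_-$, which by \eqref{eq:t3} is exactly the leading asymptotics \eqref{eq:F-1}; the same argument with $e_-$ replaced by $e_+$ and the kernel $K_{\pm\ii}$ handles \eqref{eq:F+i}.

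For the derivative asymptotics \eqref{eq:F-1-prime}, \eqref{eq:F+i-prime} I would differentiate the integral equations \eqref{eq:t1}, \eqref{eq:t2} with respect to $x$. Differentiation brings down the leading exponential term $\pm k\ee^{\pm kx}e_-$ (resp. $\pm\ii k\ee^{\pm\ii kx}e_+$) plus an integral whose kernel is now $L_j'$; after the weighting \eqref{eq:t3} this corresponds to a kernel controlled by $K_j'$, and the bound $\norm{K_j'(x)}\leq C\ee^{-dx_+}$ in \eqref{eq:t8} lets me apply Lemma~\ref{lma:t4} again to conclude that the error term is $\smallO(\ee^{\pm kx})$ (resp. $\smallO(\ee^{\pm\ii kx})$). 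Here I should be careful about the boundary contribution at $y=x$ produced by the jump of $L_j$ across the diagonal; this is precisely the delta-function source computed in the preceding lemma, and it reproduces the principal derivative term, so it must be accounted for rather than discarded.

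The main obstacle I anticipate is organising the derivative computation uniformly over all four indices $j$ and the piecewise structure of the kernels across $x>0$ and $x<0$. The decay estimate \eqref{eq:t8} is stated only for the \emph{open} sector $S_0$ (equivalently $S_\delta$), which is exactly why the asymptotics hold here but become delicate on the boundary Stokes rays; I would therefore emphasise that the strict inequality $b<a$ (strictly negative real parts of the exponents) is what makes $d(k)>0$ and drives the whole argument. Once $w_j$ and the weighted derivatives are shown to converge, the identification $\myF_j(\cdot,k;r)\in\oldF_j(\cdot,k)$ in the sense of Theorem~\ref{thm.e1} is immediate from the definitions of the solution classes.
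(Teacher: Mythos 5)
Your proposal follows the paper's own proof essentially step for step: pass to the weighted functions $w_j$, use the uniform boundedness coming from the convergent Neumann series, and apply the decay estimate \eqref{eq:t8} together with Lemma~\ref{lma:t4} to show the integral terms in \eqref{eq:t4}, \eqref{eq:t5} (and their differentiated versions, with kernel $K_j'$) vanish as $x\to\infty$. That main line is correct.

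However, one side remark in your derivative step is wrong and would spoil the computation if you acted on it. There is no boundary contribution at $y=x$: each of the kernels $A$, $A_\pm$, $B$, $B_\pm$, hence each $L_j$, is \emph{continuous} across the diagonal (note $\sinh(0)=\sin(0)=0$); it is the first derivative $L_j'$ that jumps there, and this jump is what produces the delta function in $-\epsilon L_j''-k^2L_j=-\delta$ in the preceding lemma. Consequently, differentiating the integral equation \eqref{eq:t1} once in $x$ gives exactly
\[
\myF_{\pm1}'(x)=\pm k\,\ee^{\pm kx}e_-+\int_r^\infty L_{\pm1}'(x-y)\,Q(y)\,\myF_{\pm1}(y)\,\dd y
\]
(and similarly for $\myF_{\pm\ii}$), with no extra term: the principal derivative term comes solely from differentiating the explicit exponential, and the delta-function identity is irrelevant at the first-derivative level (it matters only for checking that $\myF_j$ solves the second-order equation \eqref{e30}). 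If you inserted a boundary term that ``reproduces the principal derivative term,'' you would double-count that term and obtain asymptotics inconsistent with \eqref{eq:F-1-prime}--\eqref{eq:F+i-prime}.
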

\begin{proof}
Let us first discuss the asymptotics \eqref{eq:F-1}--\eqref{eq:F+i} of the solutions $\myF_{j}(x,k)$; we will come to the derivatives \eqref{eq:F-1-prime}--\eqref{eq:F+i-prime} at the last step of the proof. The sought asymptotics is equivalent to 
\[
w_{\pm1}(x)\to e_-, \quad w_{\pm\ii}(x)\to e_+, \quad x\to\infty,
\]
see~\eqref{eq:t3}.
Let us use the integral equations \eqref{eq:t4}--\eqref{eq:t5}. 
Since we already know that $w_{j}\in L^\infty$, it suffices to check that 
\[
\lim_{x \to \infty} \int_{r}^{\infty} \norm{K_{j}(x-y)Q(y)} \dd y = 0. 
\]
This limiting relation follows from Lemma~\ref{lma:t4} and \eqref{eq:t8}. 

Next, let us establish the asymptotics \eqref{eq:F-1-prime}--\eqref{eq:F+i-prime} of the derivatives $\myF_{j}'(x,k)$. We need to prove that 
\begin{equation}
w_{j}'(x)\to 0, \quad x\to\infty.
\label{eq:t8a}
\end{equation}
By the integral equations \eqref{eq:t4} and \eqref{eq:t5}, we have
\[
w_{j}'(x) = \int_{r}^{\infty} K_{j}'(x-y) Q(y)w_{j}(y) \dd y.
\]
By Lemma~\ref{lma:t4} and \eqref{eq:t8}, we find 
\[
\lim_{x \to \infty} \int_{r}^{\infty} \Norm{K_{j}'(x-y)Q(y)} \dd y = 0.
\]
Since we already know that $w_{j}\in L^\infty$, this yields \eqref{eq:t8a}. 
\end{proof}

\begin{remark*}
It is instructive to consider the special case $j=-1$, which is particularly simple due to the Volterra structure of the integral kernel. In this special case Lemma~\ref{lma:t4} is not needed.
\end{remark*}

\noindent \textbf{Interim summary:}
At this stage of the proof, we have established Theorem~\ref{thm.e1} for each $k$ in the open sector $S_0$. Indeed, we have found a representative $\myF_{j}(\cdot,k;r)$ in each solution class $\oldF_{j}$.

Next we consider the boundary Stokes rays $\arg k=0$ (i.e. $k>0$) and $\arg k=\pi/4$.

\begin{lemma}\label{lma:t6} $\ $ \nopagebreak
\begin{enumerate}[\rm (i)]
\item
Let $k>0$. Then the solutions ${\myF}_{-1}$, ${\myF}_{+\ii}$ and ${\myF}_{+1}$ satisfy the asymptotics \eqref{eq:F-1}--\eqref{eq:F+i-prime}, while ${\myF}_{-\ii}$ satisfies
\begin{align}
{\myF}_{-\ii}(x,k)&=\ee^{-\ii kx}e_+ +\beta_1\ee^{\ii kx}e_++\smallO(1), 
\label{eq:t9}
\\
{\myF}_{-\ii}'(x,k)&=-\ii k\ee^{-\ii kx}e_++\ii \beta_1 k \ee^{\ii kx}e_++\smallO(1), 
\label{eq:t10}
\end{align}
as $x\to\infty$, with some $\beta_1\in\bbC$. 
\item
Let $\arg k=\pi/4$. Then the solutions ${\myF}_{-1}$ and ${\myF}_{-\ii}$ satisfy the asymptotics \eqref{eq:F-1}--\eqref{eq:F+i-prime}, while ${\myF}_{+\ii}$ and ${\myF}_{+1}$ satisfy
\begin{align}
{\myF}_{+\ii}(x,k)&=\ee^{\ii kx}e_++\beta_2\ee^{-kx}e_-+\smallO(\ee^{-kx}), 
\label{eq:t11}
\\
{\myF}_{+\ii}'(x,k)&=\ii k\ee^{\ii kx}e_+-\beta_2 k \ee^{-kx}e_-+\smallO(\ee^{-kx}), 
\label{eq:t12}
\\
{\myF}_{+1}(x,k)&=\ee^{kx}e_-+\beta_3\ee^{-\ii kx}e_++\smallO(\ee^{kx}),
\label{eq:t13}
\\
{\myF}_{+1}'(x,k)&=k\ee^{kx}e_--\ii \beta_3 k \ee^{-\ii kx}e_++\smallO(\ee^{kx}),
\label{eq:t14}
\end{align}
as $x\to\infty$, with some $\beta_2,\beta_3\in\bbC$.
\end{enumerate}
\end{lemma}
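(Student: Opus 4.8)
The plan is to rerun the Neumann-series machinery of Lemma~\ref{lma:t5}, noting first that the solutions $\myF_{j}(\cdot,k;r)=\ee^{\lambda_j x}w_j$, with $\lambda_{\pm1}=\pm k$ and $\lambda_{\pm\ii}=\pm\ii k$, are already defined for every $k\in\overline{S_\delta}$, the two boundary rays included, because the bound \eqref{e45} and hence the contraction estimate \eqref{e57} hold throughout $\overline{S_0}$; in particular each $w_j$ stays bounded. The only thing that breaks on the rays is the strict decay \eqref{eq:t8}: reading off the explicit kernels displayed in the proof of Lemma~\ref{lma:t1}, I would split each $K_j=K_j^{\mathrm{good}}+K_j^{\mathrm{bad}}$, where $K_j^{\mathrm{bad}}$ collects precisely the exponential terms whose exponent has \emph{zero} real part on the ray in question, and $K_j^{\mathrm{good}}$ retains a bound $\norm{K_j^{\mathrm{good}}(z)}\leq C\ee^{-dz_+}$ with some $d>0$ (the next-smallest decay rate).

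For the good part the argument of Lemma~\ref{lma:t5} applies verbatim: Lemma~\ref{lma:t4} forces $\int_r^\infty\norm{K_j^{\mathrm{good}}(x-y)Q(y)}\dd y\to0$, so it contributes only to the $\smallO$ error, and likewise for the derivatives via $(K_j^{\mathrm{good}})'$. A bad term carried by $\chi_-$ (supported on $z<0$) is also harmless: there the integral reduces to $\int_x^\infty$, which tends to $0$ as $x\to\infty$ since $Q\in L^1$ and $w_j\in L^\infty$. Thus the only contributions surviving in the limit come from bad terms supported on $z>0$.

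Such a term has the form $c\,\ee^{\mu z}P_\pm\chi_+(z)$ with $\Re\mu=0$, and its contribution to $w_j$ is
\[
c\,\ee^{\mu x}P_\pm\int_r^x\ee^{-\mu y}Q(y)w_j(y)\,\dd y,
\]
where the integral converges absolutely as $x\to\infty$ because $\abs{\ee^{-\mu y}}=1$, $Q\in L^1$ and $w_j\in L^\infty$; hence $w_j$ acquires a genuine, non-decaying summand proportional to $\ee^{\mu x}P_\pm$. Multiplying by the prefactor $\ee^{\lambda_j x}$ turns $\ee^{\mu x}$ into the exponential claimed in \eqref{eq:t9}, \eqref{eq:t11}, \eqref{eq:t13}, and the matching derivative asymptotics follow by differentiating this summand, which reproduces the coefficient multiplied by its exponent (the factors $\pm\ii k$, $-k$, $-\ii k$ in \eqref{eq:t10}, \eqref{eq:t12}, \eqref{eq:t14}). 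The error orders $\smallO(\ee^{-kx})$ and $\smallO(\ee^{kx})$ come out automatically once one notes $\abs{\ee^{\ii kx}}=\abs{\ee^{-kx}}$ and $\abs{\ee^{kx}}=\abs{\ee^{-\ii kx}}$ on the ray $\arg k=\pi/4$.

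It then remains to identify the bad $\chi_+$-terms on each ray. On $k>0$ (so $b=0$) the only exponent with vanishing real part sitting on the positive half-line is the $P_+$-term $-\tfrac{\ii}{2k}\ee^{2\ii kx}P_+$ of $K_{-\ii}$; it yields \eqref{eq:t9}--\eqref{eq:t10}, while every other kernel is good---including $K_{-1}$, which is Volterra and uniformly bounded by \eqref{e45}---so $\myF_{-1}$, $\myF_{+\ii}$, $\myF_{+1}$ satisfy \eqref{eq:F-1}--\eqref{eq:F+i-prime}. On $\arg k=\pi/4$ (so $a=b$) the surviving terms are the $P_-$-term of $K_{+\ii}$ and the $\ee^{-(k+\ii k)x}$ half of the $P_+$-term of $K_{+1}$, both with exponent $-(k+\ii k)$ of real part $a-b=0$; they produce \eqref{eq:t11}--\eqref{eq:t12} and \eqref{eq:t13}--\eqref{eq:t14} respectively, while $K_{-1}$ and $K_{-\ii}$ stay good. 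The main obstacle is precisely this bookkeeping: for each of the four kernels on each ray one must isolate which exponential terms have zero real part \emph{and} on which half-line they are supported, since only those carried by $\chi_+$ contribute to the leading asymptotics---this is the table analysis of Lemma~\ref{lma:t1} specialised to the boundary.
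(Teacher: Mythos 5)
Your proposal is correct and follows essentially the same route as the paper: the paper likewise splits each integral kernel into a part bounded by $C\ee^{-dz_+}$ (disposed of via Lemma~\ref{lma:t4}) and the unimodular-exponent pieces, notes that those supported on $z<0$ only contribute $\smallO(1)$ tails, and extracts the extra summand $\beta\,\ee^{\mu x}e_\pm$ from the absolutely convergent integral $\int_r^\infty \ee^{-\mu y}P_\pm Q(y)w_j(y)\,\dd y$, with the projection $P_\pm$ fixing the direction $e_\pm$. Your identification of the surviving zero-real-part terms on each ray (the $P_+$-term of $K_{-\ii}$ for $k>0$; the $P_-$-term of $K_{+\ii}$ and the $\ee^{-(k+\ii k)x}$ piece of $K_{+1}$ for $\arg k=\pi/4$) matches the paper's tables exactly, so only the organisation differs: you state a general good/bad decomposition principle, while the paper argues kernel by kernel.
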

\begin{proof}

For the solution ${\myF}_{-1}$, the integral equation is of Volterra type, and so the same proof as in the previous lemma works. This proof only relied on the estimate \eqref{e45}, which holds true for $K_{-1}$ in the closed sector $\overline{S_0}$, including both boundary rays. In the rest of the proof we discuss $F_{\pm\ii}$ and $F_{+1}$.

\emph{Part (i):} Let $k>0$. 
Consider ${\myF}_{+\ii}$ and ${\myF}_{+1}$. Inspecting the tables in the proof of Lemma~\ref{lma:t1}, we find that 
\[
\norm{K_{+\ii}(x)}+\norm{K_{+\ii}'(x)}
+\norm{K_{+1}(x)}+\norm{K_{+1}'(x)}\leq C\ee^{-kx_+}, \quad 
x\in\bbR,
\]
with some $C=C(k)>0$, and therefore (cf. \eqref{eq:t8}) by the argument of the previous lemma we get the required asymptotics for ${\myF}_{+\ii}$ and ${\myF}_{+1}$. 

Let us consider ${\myF}_{-\ii}$. The integral equation for $w_{-\ii}$ is 
\begin{equation}
w_{-\ii}(x)=e_++\frac1{2k}\int_r^\infty \ee^{\ii k(x-y)}(\ee^{-k\abs{x-y}}P_--\ii\ee^{\ii k \abs{x-y}}P_+)Q(y)w_{-\ii}(y)\dd y.
\label{eq:t15}
\end{equation}
Let us split the integral on the right-hand side as $\int_r^x+\int_x^\infty$. The second integral contributes $\smallO(1)$ to the asymptotics. The first integral contains two terms: one with $P_-$ and one with $P_+$. The term with $P_-$ contributes $\smallO(1)$ to the asymptotics due to the exponential decay of the integral kernel (cf. \eqref{eq:t8}). Consider the remaining integral:
\[
\frac{-\ii}{2k}\int_r^x \ee^{2\ii k(x-y)}P_+Q(y)w_{-\ii}(y)\dd y
=
\frac{-\ii}{2k}\ee^{2\ii k x}\left(\int_r^\infty \ee^{-2\ii ky}P_+Q(y)w_{-\ii}(y)\dd y+\smallO(1)\right), 
\]
as $x\to\infty$. The integral in brackets converges absolutely. Due to the projection $P_+$ in the integrand, the integral is proportional to the vector $e_+$. 
Thus, we can write 
\[
\frac{-\ii}{2k}\int_r^\infty \ee^{-2\ii ky}P_+Q(y)w_{-\ii}(y)\dd y=\beta_1 e_+
\]
with some $\beta_1\in\bbC$. 
With this notation, we obtain 
\[
w_{-\ii}(x)=e_++\beta_1 \ee^{2\ii k x}e_++\smallO(1), \quad x\to\infty.
\]
Returning to the notation ${\myF}_{-\ii}$, we obtain the asymptotics \eqref{eq:t9}. 
Differentiating \eqref{eq:t15}, by the same methods we obtain the asymptotics \eqref{eq:t10} for the derivative. 

\emph{Part (ii):} Let $\arg k=\pi/4$; we set $k=a+\ii a$ with $a>0$.
In this part we will be brief as the arguments are analogous to those of part (i). 
For ${\myF}_{-\ii}$ we check 
\[
\norm{K_{-\ii}(x)}+\norm{K_{-\ii}'(x)}\leq C\ee^{-2ax_+}, \quad 
x\in\bbR,
\]
and so the respective asymptotics of \eqref{eq:F+i} and \eqref{eq:F+i-prime} follow. For ${\myF}_{+\ii}$, by the same pattern as in part (i), we find 
\begin{align*}
w_{+\ii}(x)&=e_+ +\int_r^x K_{+\ii}(x-y)Q(y)w_{+\ii}(y)\dd y+\smallO(1)
\\
&=e_+ +\frac1{2k}\int_r^x \ee^{-2\ii a(x-y)}P_-Q(y)w_{+\ii}(y)\dd y+\smallO(1)
\\
&=e_+ +\beta_2\ee^{-2\ii ax}e_-+\smallO(1), 
\end{align*}
as $x\to\infty$ with some $\beta_2\in\bbC$. Expressing this in terms of ${\myF}_{+\ii}$, we obtain \eqref{eq:t11}. 

Finally, consider ${\myF}_{+1}$. For $x>0$ we have 
\begin{align*}
K_{+1}(x)&=\frac1{2k}\ee^{-2kx}P_{-}+\frac1{2\ii k}\ee^{-(k-\ii k)x}P_{+}-\frac1{2\ii k}\ee^{-(k+\ii k)x}P_{+}
\\
&=-\frac1{2\ii k}\ee^{-2\ii ax}P_{+}+\calO(\ee^{-2ax}), 
\end{align*}
and therefore
\begin{align*}
w_{+1}(x)&=e_- +\int_r^x K_{+1}(x-y)Q(y)w_{+1}(y)\dd y+\smallO(1)
\\
&=e_- -\frac1{2\ii k}\int_r^x \ee^{-2\ii a(x-y)}P_+Q(y)w_{+1}(y)\dd y+\smallO(1)
\\
&=e_- +\beta_3\ee^{-2\ii ax}e_++\smallO(1), 
\end{align*}
as $x\to\infty$. Expressing this in terms of ${\myF}_{+1}$, we obtain \eqref{eq:t13}. Formulas \eqref{eq:t12} and \eqref{eq:t14} for the derivatives are deduced by the same approach.
\end{proof}

\subsection{Proof of Theorem~\ref{thm.e1} for $k$ in the closed sector $\overline{S_0}$.}
It suffices to prove the theorem for $k\in\overline{S_\delta}$ for any $\delta>0$. We fix $\delta>0$ and $r\geq0$ as in \eqref{e38}. We must exhibit at least one representative in each class $\oldF_{j}(\cdot,k)$ for our range of $k$.

By Lemma~\ref{lma:t5}, we have $\myF_{j}(\cdot,k;r)\in{\oldF}_{j}(\cdot,k)$ in $S_\delta$; it remains to consider the boundary Stokes rays $\arg k=0$ and $\arg k=\pi/4$. We use Lemma~\ref{lma:t6} and proceed step by step as follows (suppressing dependence on $k$ and $r$ for readability):
\begin{itemize}
\item
$\myF_{-1}\in{\oldF}_{-1}$ both for $\arg k=0$ and for $\arg k=\pi/4$;
\item
$\myF_{+\ii}\in{\oldF}_{+\ii}$ for $\arg k=0$ and ${\myF}_{+\ii}-\beta_2 {\myF}_{-1}\in \oldF_{+\ii}$ for $\arg k=\pi/4$;
\item
${\myF}_{-\ii}-\beta_1{\myF}_{+\ii}\in \oldF_{-\ii}$ for $\arg k=0$ and ${\myF}_{-\ii}\in\oldF_{-\ii}$ for $\arg k=\pi/4$;
\item
${\myF}_{+1}\in \oldF_{+1}$ for $\arg k=0$
and 
${\myF}_{+1}-\beta_3{\myF}_{-\ii}\in \oldF_{+1}$ for $\arg k=\pi/4$. 
\end{itemize}
This defines all four Jost solutions in $\overline{S_\delta}$.
\qed

\subsection{Extension to $k\in\bbC\setminus\{0\}$ via symmetry transformations}
So far, we have proved the existence of Jost solutions for $k\in\overline{S_0}$ for \emph{any potential} $Q$. Here will leverage this flexibility to extend the existence of Jost solutions to all $k\not=0$, by varying the potential. In this subsection it will be convenient to indicate the dependence of the Jost solutions on the potential by writing $\oldF_j(x,k;Q)$.

\emph{Step 1: extension to $-\pi/4\leq \arg k\leq\pi/4$.}
We first observe the following transformation of solutions of the eigenvalue equation \eqref{e30}:
\[
\mbox{if } \oldF(x,k;Q) \mbox{ is a solution of \eqref{e30}, then } \overline{\oldF(x,\overline{k};\overline{Q})} \mbox{ is a solution of \eqref{e30}.}
\]
Using this observation and inspecting the asymptotics of solutions $\oldF_{j}$ in $\overline{S_0}$, we find that formulas
\begin{align*}
\oldF_{\pm\ii}(x,k;Q)&=\overline{\oldF_{\mp\ii}(x,\overline{k};\overline{Q})},
\\
\oldF_{\pm1}(x,k;Q)&=\overline{\oldF_{\pm1}(x,\overline{k};\overline{Q})},
\end{align*}
define the solutions satisfying the hypothesis of Theorem~\ref{thm.e1} for $-\pi/4\leq\arg k\leq 0$. Thus, at this step of the proof we have constructed the Jost solutions for all non-zero $k$ with $-\pi/4\leq\arg k\leq \pi/4$. 

\emph{Step 2: extension to $\pi/4\leq\arg k\leq 3\pi/4$.}
We denote 
\[
\xi=
\begin{pmatrix}
1&0\\0&-1
\end{pmatrix}.
\]
It is important that $\xi e_+=e_-$, $\xi e_-=e_+$ and $\xi\epsilon\xi=-\epsilon$. This step relies on the following transformation:
\[
\mbox{if } \oldF(x,k;Q) \mbox{ is a solution of \eqref{e30}, then } \xi{\oldF(x,-\ii k;-\xi Q\xi}) \mbox{ is a solution of \eqref{e30}.}
\]
Using this and inspecting the asymptotics of the Jost solutions, we find that formulas
\begin{align*}
\oldF_{\pm\ii}(x,k;Q)&=\xi \oldF_{\mp1}(x,-\ii k;-\xi Q\xi),
\\
\oldF_{\pm1}(x,k;Q)&=\xi \oldF_{\pm\ii}(x,-\ii k;-\xi Q\xi),
\end{align*}
define the solutions satisfying the hypothesis of Theorem~\ref{thm.e1} for any non-zero $k$ with $\pi/4\leq\arg k\leq 3\pi/4$.

\emph{Step 3: extension to $3\pi/4\leq\arg k\leq 7\pi/4$.}
Finally, noticing the invariance of the eigenvalue equation~\eqref{e30} under the transformation $k\mapsto-k$, we may use formulas 
\begin{align*}
\oldF_{\pm\ii}(x,k;Q)&=\oldF_{\mp\ii}(x,-k;Q),
\\
\oldF_{\pm1}(x,k;Q)&=\oldF_{\mp1}(x,-k;Q),
\end{align*}
to extend the definition of the Jost solutions to the half-plane $3\pi/4\leq\arg k\leq 7\pi/4$, filling the complete punctured $k$-plane. (This is equivalent to applying the transformation of Step 2 twice.)
The proof of Theorem~\ref{thm.e1} is complete. 
\qed

\subsection{Proof of Proposition~\ref{cr.e3}: reduction to the scalar anti-linear equation}
It is easy to check that if $F$ satisfies the differential equation \eqref{e30}, with $k^2$ real and $Q$ as in~\eqref{eq:a2}, then $\epsilon\overline{F}$ satisfies the same equation. Consider $F=\oldF_{-1}(x,k)$; by inspection, $-\epsilon\overline{F}$ satisfies the same asymptotics as $F$. 
Since the class $\oldF_{-1}$ for $k>0$ consists of a single solution, we conclude that 
\[
-\epsilon\overline{\oldF_{-1}(x,k)}=\oldF_{-1}(x,k),
\]
and so we can represent $\oldF_{-1}(x,k)$ as in \eqref{e50} with some function $\newe$. Rewriting the differential equation and the asymptotics for $\oldF_{-1}$ in terms of $\newe$, we obtain \eqref{e48}, \eqref{e49}.
In a similar way, inspecting $F=\oldF_{+\ii}(x,\ii k)$, we find that 
\[
\epsilon\overline{\oldF_{+\ii}(x,\ii k)}=\oldF_{+\ii}(x,\ii k)
\]
and from here we infer the representation \eqref{e50} for $\oldF_{+\ii}(x,\ii k)$ with some function $\newe$ which satisfies \eqref{e48}, \eqref{e49}.

If $\newe$ satisfies  \eqref{e48} and \eqref{e49}, it is straightforward to see that $\oldF_{-1}$ defined by formula \eqref{e50} satisfies the conditions of Theorem~\ref{thm.e1}.  From here and the uniqueness of $\oldF_{-1}$ we obtain the uniqueness of $\newe$. 

Finally, $\newe$ satisfies \eqref{e6a} if and only if $\oldF_{-1}$ satisfies $L_\alpha^\perp(\oldF_{-1})=0$. 
The proof of Proposition~\ref{cr.e3} is complete. \qed

\subsection{Explicit integral equations for $\myF_{-1}$ and $\myF_{+\ii}$ }
The most important solutions for our analysis are $\myF_{-1}(x,k)$ and $\myF_{+\ii}(x,k)$, see Theorem~\ref{thm.5.3}. We finish this section by displaying explicitly the integral equations for these solutions, with $0\leq \arg k<\pi/4$. 

For $\myF_{-1}$, equation \eqref{eq:t1} reads 
\begin{align}
\myF_{-1}(x,k)
=&\ee^{-kx} e_-
+\frac1k\int_x^\infty \sinh(k(x-y))P_-Q(y)\myF_{-1}(y,k)\dd y
\notag
\\
&-\frac1k\int_x^\infty \sin(k(x-y))P_+Q(y)\myF_{-1}(y,k)\dd y, \quad 0\leq\arg k\leq\pi/4.
\label{eq:e64}
\end{align}
For $\myF_{+\ii}$, \eqref{eq:t2} gives
\begin{align}
\myF_{+\ii}(x,k;r)
=&\ee^{\ii kx}e_+
-\frac1k\int_x^\infty \sin(k(x-y))P_+Q(y)\myF_{+\ii}(y,k;r)\dd y
\notag
\\
&+\frac1{2k}\int_r^\infty \ee^{-k\abs{x-y}}P_-Q(y)\myF_{+\ii}(y,k;r)\dd y, \quad 0\leq\arg k<\pi/4
\label{eq:e65}
\end{align}
for $x>r$, where $r\geq0$ is any parameter satisfying \eqref{e38}. 
We observe that \eqref{eq:e65} is a Fredholm type equation, but not a Volterra type equation.

\section{Jost solutions: analyticity, continuity and large $\abs{k}$ asymptotics}\label{sec:cont}

\subsection{Analyticity and continuity with respect to $k$}
In this section we work in the same framework and under the same assumptions as in the previous section. We discuss the analyticity, continuity, and large $\abs{k}$ asymptotics of the Jost solutions ${\myF}_{j}={\myF}_{j}(x,k;r)$. We recall that ${\myF}_{j}$ are initially defined for $x\geq r$ (as solutions of the integral equations \eqref{eq:t1}, \eqref{eq:t2}), but then extended to $[0,\infty)$ as solutions of the differential equation \eqref{e30}. Below, we write ${\myF}_{j}'=\frac{\partial}{\partial x}{\myF}_{j}$. 

\begin{theorem}\label{thm.e2}
Let $\delta>0$ and $r\geq0$ satisfy \eqref{e38}. Then for any $x\geq0$ and any $j\in\{\pm1,\pm\ii\}$, the Jost solution ${\myF}_{j}(x,k;r)$ and its derivative ${\myF}_{j}'(x,k;r)$ are analytic in $k\in S_\delta$ and continuous in $k\in\overline{S_\delta}$. 
\end{theorem}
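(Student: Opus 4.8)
The plan is to exploit the Neumann series representation of the solutions $w_j$ established in the previous section, and to transfer analyticity from the series to the Jost solutions $\myF_j$ and their derivatives. First I would recall that for $k\in\overline{S_\delta}$ the integral equations \eqref{eq:t4}, \eqref{eq:t5} have unique solutions given by the convergent Neumann series $w_j=\sum_{m=0}^\infty \calK_j^m e_{\mp}$, where $\calK_j$ is the integral operator with kernel $K_j(x-y)Q(y)$. The key observation is that each kernel $K_j(x,k)$ is, for fixed $x$, an explicit combination of exponentials $\ee^{\pm kx}$ and $\ee^{\pm\ii kx}$ divided by $k$ (see the explicit formulas in the proof of Lemma~\ref{lma:t1}), and these are manifestly analytic in $k\in S_\delta$ and continuous up to $\overline{S_\delta}$ (recall $\abs{k}\geq\delta>0$ excludes the singularity at $k=0$).

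Next I would argue that each term $\calK_j^m e_{\mp}$, viewed as a function of $k$ with values in $L^\infty((r,\infty);\bbC^2)$, is analytic in $S_\delta$ and continuous in $\overline{S_\delta}$. This follows because the kernel $K_j(x-y,k)$ depends analytically (resp. continuously) on $k$, uniformly in $x,y$ thanks to the uniform bound $\norm{K_j(x)}\leq 2/\abs{k}\leq 2/\delta$ from \eqref{e45}, and because finite compositions of such integral operators preserve joint analyticity. The crucial point is the \emph{uniformity} of the Neumann series: by the estimate \eqref{e57}, we have $\norm{\calK_j}_{L^\infty\to L^\infty}<1/2$ uniformly for $k\in\overline{S_\delta}$, so the series converges in operator norm uniformly in $k$. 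A locally uniform limit of analytic (resp. continuous) functions is analytic (resp. continuous), so $w_j(\cdot,k;r)$ inherits analyticity in $S_\delta$ and continuity in $\overline{S_\delta}$ as an $L^\infty$-valued function of $k$; evaluating at fixed $x\geq r$ and multiplying by the analytic factor $\ee^{\pm kx}$ (resp. $\ee^{\pm\ii kx}$) via \eqref{eq:t3} transfers these properties to $\myF_j(x,k;r)$ for $x\geq r$.

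The remaining step is to extend the conclusion from $x\geq r$ to all $x\in[0,\infty)$ and to cover the derivative $\myF_j'$. For this I would invoke the differential equation \eqref{e30}: $\myF_j$ extends to $[0,r]$ as the solution of a second-order linear ODE with Cauchy data $\myF_j(r,k)$, $\myF_j'(r,k)$ at $x=r$. Standard results on analytic/continuous dependence of ODE solutions on parameters (here the parameter is $k$, entering both the equation through $k^2$ and the initial data) then give analyticity in $S_\delta$ and continuity in $\overline{S_\delta}$ of the extended $\myF_j(x,k)$ and $\myF_j'(x,k)$ for every $x\in[0,\infty)$. For the derivative on $[r,\infty)$ one differentiates the integral equation to obtain $\myF_j'$ as a $k$-analytic integral of the same type, using the companion bound $\norm{K_j'(x)}\leq 4$ from \eqref{e45} to control the series term-by-term.

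I expect the main obstacle to be the careful justification that the Neumann series converges \emph{locally uniformly in $k$} in the operator-valued sense, since this is what licenses passing analyticity through the infinite sum; the uniform bound \eqref{e57} valid on all of $\overline{S_\delta}$ is precisely the ingredient that makes this routine rather than delicate. A secondary technical point is ensuring joint continuity up to the boundary Stokes rays $\arg k=0$ and $\arg k=\pi/4$, where one must check that the explicit exponential kernels remain continuous (they do, being entire in $k$) and that the uniform estimates of Lemma~\ref{lma:t1} persist on the closed sector, which they do by construction.
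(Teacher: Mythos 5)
Your proposal is correct and follows essentially the same route as the paper's proof: Neumann series for $w_j$ with the uniform bound \eqref{e57} ensuring convergence uniformly in $k\in\overline{S_\delta}$, analyticity/continuity of each term via the kernel estimates \eqref{e45}, transfer to $\myF_j$ and $\myF_j'$ through \eqref{eq:t3} and the differentiated integral equation, and extension to $[0,r]$ by analytic dependence of the Cauchy problem on $k$. The only cosmetic difference is that you phrase the term-by-term analyticity at the level of operator-valued functions, whereas the paper checks it directly on the iterated integrals; the substance is identical.
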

\begin{proof} 
First let us assume that $x\geq r$. 
It follows from \eqref{eq:t4}--\eqref{eq:t5} that the functions  $w_{j}$ (see \eqref{eq:t3} for their definition) can be represented as Neumann series
\[
w_{j}=\sum_{m=0}^\infty \calK_{j}^m e_\pm, 
\]
cf. \eqref{eq:ns}. 
The Neumann series converges in $L^\infty((r,\infty);\bbC^2)$ uniformly over $k\in\overline{S_\delta}$. Its $m$th term reads
\[
\calK_{j}^m e_\pm(x)
=
\left(\int_r^\infty\!K_{j}(x-x_1)Q(x_1)\cdots\!\int_r^\infty\!K_{j}(x_{m-1}-x_{m})Q(x_m)
\dd x_1 \cdots \dd x_m\,\right)\!e_\pm.
\]
The integrand is an analytic function of $k$ in $S_\delta$ and bounded by 
\[
\frac{2^m}{\abs{k}^m}\,\norm{Q(x_1)}\cdots\norm{Q(x_m)},
\]
see \eqref{e45}. It follows that $\calK_{j}^m e_\pm(x)$ is analytic in $k\in S_\delta$. By the uniform convergence of the Neumann series, we find that $w_{j}(x)$ is analytic in $k\in S_\delta$. It follows that ${\myF}_{j}(x,k;r)$ is analytic in $k\in S_\delta$. By the same reasoning, one proves the analyticity of ${\myF}_{j}'(x,k;r)$ in $k\in S_\delta$. 

Now consider $0\leq x<r$. Observe that  ${\myF}_{j}(x,k;r)$ on the interval $[0,r]$ can be considered as a solution of the Cauchy problem with the Cauchy data at the point $x=r$. This implies the analytic dependence on $k$ also for $x$ in this range. 

Finally, one can replace analyticity in $S_\delta$ by continuity in $\overline{S_\delta}$ throughout the whole argument above. 
\end{proof}

\subsection{The large $\abs{k}$ asymptotics of $\myF_{-1}$ and $\myF_{+\ii}$}
In the proofs of Theorem~\ref{thm.a1} and Proposition~\ref{prop:M_func_asympt} we will need the $\abs{k}\to\infty$ asymptotics of the Jost solutions $\myF_{-1}(0,k;r)$  and $\myF_{+\ii}(0,k;r)$ in the sector $\overline{S_0}$. Although similar statements can be proved for other Jost solutions, for a more general range of $k$, and for all $x\geq0$, here we establish only the result that we need. In order to simplify the asymptotic expressions below, we only consider $Q$ of the symmetric form \eqref{eq:a2}. We observe that for $\abs{k}$ large, one can always take $r=0$ in the definition of ${\myF}_{j}$, see \eqref{e38}, and we will use this choice in the statement below. 

\begin{lemma}\label{lma.cont2}
Let $q\in L^1(\bbR_+)$ and let $Q$ be of the form \eqref{eq:a2}. Denote 
\[
q_0=\frac12\int_0^\infty \Re q(x)\dd x.
\]
As $\abs{k}\to\infty$ in the sector $0\leq\arg k\leq\pi/4$,  the Jost solutions 
${\myF}_{-1}(x,k)={\myF}_{-1}(x,k;0)$ and ${\myF}_{+\ii}(x,k)={\myF}_{+\ii}(x,k;0)$ satisfy the asymptotics 
\begin{align}
{\myF}_{-1}(0,k)&=e_-+\frac1k q_0 e_-+\smallO(1/k),
\label{eq:F-1-k}
\\
{\myF}_{-1}'(0,k)&=-ke_--q_0 e_-+\smallO(1), 
\label{eq:F-1-prime-k}
\\
{\myF}_{+\ii}(0,k)&=e_++\frac{\ii}{k}q_0 e_++\smallO(1/k),
\label{eq:F-i-k}
\\
{\myF}_{+\ii}'(0,k)&=\ii ke_+-q_0 e_++\smallO(1).
\label{eq:F-i-prime-k}
\end{align}
\end{lemma}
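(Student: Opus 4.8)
The plan is to read the asymptotics off the Neumann series for the auxiliary functions $w_{-1}$ and $w_{+\ii}$ of \eqref{eq:t3}, exploiting that at $x=0$ one has $\myF_{-1}(0,k)=w_{-1}(0,k)$ and $\myF_{+\ii}(0,k)=w_{+\ii}(0,k)$. Taking $r=0$ (admissible for large $\abs{k}$ by \eqref{e38}), the integral equations \eqref{eq:t4}, \eqref{eq:t5} yield the Neumann series $w_{-1}=\sum_{m\ge0}\calK_{-1}^m e_-$ and $w_{+\ii}=\sum_{m\ge0}\calK_{+\ii}^m e_+$, cf. \eqref{eq:ns}. Since for $r=0$ the bound \eqref{e57} gives $\norm{\calK_{j}}_{L^\infty\to L^\infty}\le \tfrac{2}{\abs{k}}\norm{Q}_{L^1}$, the tail $\sum_{m\ge2}$ is $O(\abs{k}^{-2})=\smallO(\abs{k}^{-1})$ in $L^\infty$. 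Hence for the $\smallO(1/k)$ asymptotics it suffices to keep the zeroth term $e_\pm$ and the first order term $\calK_{j}e_\pm(0)=\int_0^\infty K_{j}(-y)Q(y)e_\pm\,\dd y$, and to expand the latter to order $\smallO(1/k)$.

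The heart of the computation is this first order term. The key algebraic observation is the decomposition of $Qe_\pm$ in the $e_+,e_-$ basis, which by \eqref{eq:a2} and \eqref{eq:proj} reads
\begin{align*}
P_+Qe_+&=\Re q\, e_+, & P_-Qe_+&=\ii\Im q\, e_-,
\\
P_-Qe_-&=-\Re q\, e_-, & P_+Qe_-&=-\ii\Im q\, e_+.
\end{align*}
Inserting the explicit kernels $K_{-1}$ and $K_{+\ii}$ from the proof of Lemma~\ref{lma:t1} (evaluated at argument $-y$, $y>0$) and using the identities $\ee^{-ky}\sinh(ky)=\tfrac12(1-\ee^{-2ky})$ and $\ee^{\ii ky}\sin(ky)=\tfrac1{2\ii}(\ee^{2\ii ky}-1)$, the only contribution surviving at order $1/k$ comes from the $\Re q$ components along the ``diagonal'' projections, where the constant part $\tfrac12$ integrates against $\Re q$ to produce $q_0$. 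This gives $\calK_{-1}e_-(0)=\tfrac1k q_0 e_-+\smallO(1/k)$ and $\calK_{+\ii}e_+(0)=\tfrac{\ii}{k}q_0 e_++\smallO(1/k)$, which are precisely \eqref{eq:F-1-k} and \eqref{eq:F-i-k}; all remaining pieces (the $\Im q$ components and the $\ee^{-2ky}$, $\ee^{2\ii ky}$ factors) will be shown to be $\smallO(1)$ and thus absorbed into the error.

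For the derivatives I would differentiate \eqref{eq:t3} to get $\myF_{-1}'(0,k)=-k\,w_{-1}(0,k)+w_{-1}'(0,k)$ and $\myF_{+\ii}'(0,k)=\ii k\,w_{+\ii}(0,k)+w_{+\ii}'(0,k)$. Substituting the value asymptotics already obtained turns $-k\,w_{-1}(0,k)$ into $-ke_--q_0e_-+\smallO(1)$ and $\ii k\,w_{+\ii}(0,k)$ into $\ii ke_+-q_0e_++\smallO(1)$, matching the leading terms of \eqref{eq:F-1-prime-k} and \eqref{eq:F-i-prime-k}. It then remains to prove $w_{j}'(0,k)=\smallO(1)$. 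Writing $w_{j}'(0)=\int_0^\infty K_{j}'(-y)Q(y)w_{j}(y)\,\dd y$, one replaces $w_{j}$ by $e_\pm+\smallO(1/k)$ (the correction contributes $\smallO(1/k)$ by $\norm{K_{j}'}\le4$ from \eqref{e45} and $q\in L^1$) and inserts the explicit $K_{j}'$; this leaves exactly the same type of exponential and oscillatory integrals against $\Re q$ and $\Im q$, which again tend to zero.

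The main obstacle is the uniform justification, across the \emph{closed} sector $0\le\arg k\le\pi/4$, that the auxiliary integrals of the form $\int_0^\infty \ee^{-cky}\Re q(y)\,\dd y$ and $\int_0^\infty\ee^{\ii c'ky}\Im q(y)\,\dd y$ are $\smallO(1)$ as $\abs{k}\to\infty$. In the interior of the sector every relevant exponent has strictly negative real part and decay is immediate by dominated convergence; but on the boundary rays $\arg k=0$ and $\arg k=\pi/4$ some exponents degenerate to purely imaginary ones (for instance $\ee^{2\ii ky}$ when $\arg k=0$), so exponential decay fails and one must instead invoke a Riemann--Lebesgue argument with frequency tending to infinity. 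Combining exponential decay away from the rays with a Riemann--Lebesgue estimate near them --- most cleanly through a density argument approximating $q$ in $L^1$ by continuous compactly supported functions --- delivers the required $\smallO(1)$ bounds uniformly in $\arg k$ and completes the proof.
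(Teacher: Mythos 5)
Your proposal is correct and is essentially the paper's own argument: both truncate the Neumann series after the first-order term using $\norm{\calK_{j}}_{L^\infty\to L^\infty}=\calO(1/k)$, evaluate $\calK_{j}e_{\pm}(0)$ via the projection identities for $Qe_{\pm}$ (the paper's \eqref{eq:aux_id_1a}), and then determine the asymptotics of the resulting exponential/oscillatory integrals, including the uniform Riemann--Lebesgue point on the boundary rays $\arg k=0$ and $\arg k=\pi/4$, which the paper compresses into a single sentence but you correctly spell out. The only organisational difference is that you obtain the derivative asymptotics from the product rule applied to \eqref{eq:t3} together with $w_{j}'(0,k)=\smallO(1)$, whereas the paper differentiates the truncated integral representation of $\myF_{-1}$ directly; these are equivalent.
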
 
\begin{proof}
Let us start with \eqref{eq:F-1-k} and \eqref{eq:F-1-prime-k}. The proof follows from a more detailed analysis of the integral equation \eqref{eq:t4} for $w_{-1}$. We revisit \eqref{e57} and see that the norm of the integral operator $\mathcal{K}_{-1}$ is $\calO(1/k)$ as $\abs{k}\to\infty$. It follows that the solution of $w_{-1} = e_- + \mathcal{K}_{-1} w_{-1}$ satisfies the uniform estimate
\[
w_{-1}=e_-+\mathcal{K}_{-1}e_-+\calO(1/k^2)\quad \text{as} \quad \abs{k}\to\infty.
\]
Now let us return to ${\myF}_{-1}$ and rewrite the last relation as
\begin{align*}
{\myF}_{-1}(x,k)=\ee^{-kx}e_- 
&+\frac1k\int_x^\infty\sinh(k(x-y))P_-Q(y)\ee^{-ky}e_- \dd y
\\
&-\frac1k\int_x^\infty \sin(k(x-y))P_+Q(y)\ee^{-ky}e_- \dd y+\calO(1/k^2),
\end{align*}
cf.~\eqref{eq:e64}.
Let us set $x=0$ and use the matrix identities
\begin{equation}
P_+Q(x)e_-=-\ii \Im q(x) e_+, \quad 
P_-Q(x)e_-=-\Re q(x)e_-.
\label{eq:aux_id_1a}
\end{equation}
This gives
\begin{align*}
{\myF}_{-1}(0,k)=e_- 
&+\frac1k\int_0^\infty\sinh(ky)\ee^{-ky}\Re q(y)  \dd y\,  e_-
\\
&-\frac{\ii}{k}\int_0^\infty \sin(ky)\ee^{-ky}\Im q(y) \dd y\,  e_+ +\calO(1/k^2).
\end{align*}
Evaluating the asymptotics of the integrals in the right-hand side here, we arrive at \eqref{eq:F-1-k}. In a similar way we obtain \eqref{eq:F-1-prime-k}.

For \eqref{eq:F-i-k} and \eqref{eq:F-i-prime-k}, the argument is very similar, with the only difference that one starts with the integral equation \eqref{eq:e65}. We skip the details of this calculation. 
\end{proof}

\section{Characterisation of the spectral pair}\label{sec.ee}

The aim of this section is to prove Theorems~\ref{thm.a1} and \ref{thm.5.3}.

\subsection{The matrix solution $E$}
Let us put together two exponentially decaying Jost solutions to form a $2\times 2$ matrix. 
Fix $\delta>0$ and $r\geq0$ such that \eqref{e38} holds.  For $k\in\overline{S_\delta}$, we set 
\begin{equation}
E(x,k)=\{{\myF}_{+\ii}(x,k;r),{\myF}_{-1}(x,k;r)\}.
\label{e8b}
\end{equation}
We suppress the dependence of $E(x,k)$ on $r$ in our notation. In fact, the choice of $r$ will play no role in the argument below and the dependence of relevant quantities on $r$ will eventually disappear. 

Of crucial importance for us will be the dependence of $E(x,k)$ on $k\in\overline{S_\delta}$ as $k$ approaches the positive half-line from above. We recall that by Theorem~\ref{thm.e2}, ${\myF}_{-1}$ and ${\myF}_{+\ii}$ are continuous in $k$ up to the boundary ray $k>0$. Furthermore, by Lemmas~\ref{lma:t5} and \ref{lma:t6}, the solutions ${\myF}_{-1}$ and ${\myF}_{+\ii}$ satisfy the asymptotics \eqref{eq:F-1}--\eqref{eq:F+i-prime} for all $k\in S_0$ as well as for $k>0$.

We also denote
\[
 L_{\alpha}^{\perp}(E(\cdot,k))
=\{L_{\alpha}^{\perp}({\myF}_{+\ii}),L_{\alpha}^{\perp}({\myF}_{-1})\}.
\]
For the purposes of comparing with the scalar self-adjoint case, we note the identity 
\[
\epsilon L_{\alpha}^{\perp}(E(\cdot,k))
=\Phi(0,\overline{k^2})^*\epsilon E'(0,k)-\Phi'(0,\overline{k^2})^*\epsilon E(0,k),
\]
which should be compared to \eqref{eq:sa4a}. 

We remind the reader that the spectrum of $\bH$ is symmetric around $0$, and so it suffices to consider the spectral parameter $\lambda>0$ below. 

\begin{lemma}\label{lma:e3}
Fix $\delta>0$ and $r\geq0$ such that \eqref{e38} holds, and for $k\in\overline{S_\delta}$ let the solution $E$ be defined by \eqref{e8b}. 
\begin{enumerate}[\rm (i)]
\item
For all $k\in\overline{S_\delta}$,  we have $\det  L_{\alpha}^{\perp}(E(\cdot,k))=0$ if and only if $k^2$ is an eigenvalue of $\bH$. 
\item
The determinant $\det L_{\alpha}^{\perp}(E(\cdot,k))$ is analytic in $k\in S_\delta$
and continuous in $k\in\overline{S_\delta}$.  
\end{enumerate}
\end{lemma}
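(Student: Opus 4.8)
My plan is to prove part (ii) directly from Theorem~\ref{thm.e2}, and to establish the equivalence in part (i) by splitting the closed sector $\overline{S_\delta}$ according to whether $\lambda=k^2$ is real. In the range $0\le\arg k\le\pi/4$ the value $k^2$ is real \emph{only} on the boundary ray $\arg k=0$, i.e. $k>0$; for every other $k\in\overline{S_\delta}$ it lies in the open upper half-plane. This is the decisive distinction, as it controls how many columns of $E$ are square-integrable. For part (ii) itself, I note that by \eqref{eq:nsa6v} and \eqref{eq:nsa7v} each column of $L_\alpha^\perp(E(\cdot,k))$ is a fixed linear combination of the boundary data $\myF_j(0,k;r)$ and $\myF_j'(0,k;r)$, which are analytic in $k\in S_\delta$ and continuous in $k\in\overline{S_\delta}$ by Theorem~\ref{thm.e2}. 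Since $\det L_\alpha^\perp(E(\cdot,k))$ is a polynomial in these entries, its analyticity and continuity follow at once.

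For part (i) with $\arg k\neq0$, both $\myF_{+\ii}$ and $\myF_{-1}$ decay exponentially (for $k\in S_\delta$ this is Lemma~\ref{lma:t5}, and on the ray $\arg k=\pi/4$ it is a quick check from the asymptotics in Lemma~\ref{lma:t6}(ii)), so the two columns of $E$ are linearly independent and span the space of $L^2$-solutions of \eqref{e30}. If $\det L_\alpha^\perp(E(\cdot,k))=0$, I would pick a nonzero $(c_1,c_2)$ in the kernel of $L_\alpha^\perp(E(\cdot,k))$; then $F=c_1\myF_{+\ii}+c_2\myF_{-1}$ is a nonzero $L^2$-solution with $L_\alpha^\perp(F)=0$, hence $F\in\Dom\bH$ and $\bH F=k^2F$. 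As $\bH$ is self-adjoint and $k^2\notin\bbR$, this is impossible, so $\det L_\alpha^\perp(E(\cdot,k))\neq0$; and $k^2$ is not an eigenvalue for the same reason, so both sides of the asserted equivalence are false.

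The remaining case $k>0$ is where I expect the main obstacle. Now only $\myF_{-1}$ is square-integrable (the modes $\ee^{\pm\ii kx}e_+$ are bounded but not $L^2$, and $\ee^{kx}e_-$ grows), so the $L^2$-solution space is one-dimensional and $k^2$ is an eigenvalue of $\bH$ if and only if $L_\alpha^\perp(\myF_{-1})=0$. The implication $L_\alpha^\perp(\myF_{-1})=0\Rightarrow\det L_\alpha^\perp(E(\cdot,k))=0$ is trivial, since the second column then vanishes. For the converse I would argue by contradiction: if $\det L_\alpha^\perp(E(\cdot,k))=0$ while $L_\alpha^\perp(\myF_{-1})\neq0$, the columns are proportional, giving $c\in\bbC$ with $L_\alpha^\perp(G)=0$ for $G:=\myF_{+\ii}-c\myF_{-1}$. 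By Lemma~\ref{lma:t6}(i), $\myF_{+\ii}$ retains the clean asymptotics \eqref{eq:F+i}, \eqref{eq:F+i-prime} for $k>0$, whence $G(x)=\ee^{\ii kx}e_++\smallO(1)$ and $G'(x)=\ii k\ee^{\ii kx}e_++\smallO(1)$: a genuinely oscillatory, non-$L^2$ solution.

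The contradiction comes from the sesquilinear Wronskian \eqref{eq:wr-def}. Because $k^2$ is real and $Q$ in \eqref{eq:a2} is Hermitian, $[G,G]$ is independent of $x$. Evaluating it at $x=0$ through \eqref{eq:wr4} gives $[G,G]=0$, since $L_\alpha^\perp(G)=0$ annihilates both terms; evaluating the same constant as $x\to\infty$ from the oscillatory asymptotics, using $\jap{e_+,e_+}_{\bbC^2}=2$ and $\abs{\ee^{\ii kx}}=1$, yields $[G,G]=4\ii k\neq0$. This computation is the heart of the matter: it expresses self-adjointness as the vanishing of the boundary flux $[G,G]$ for any solution satisfying the boundary condition — a flux that a non-$L^2$ oscillatory solution necessarily carries. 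The point most in need of care is exactly the bookkeeping on the boundary ray, namely that $\myF_{+\ii}$ acquires no spurious $\ee^{-\ii kx}e_+$ term for $k>0$ (which would corrupt the flux computation); this is precisely what Lemma~\ref{lma:t6} guarantees.
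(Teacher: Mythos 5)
Your proof is correct and takes essentially the same route as the paper: part (ii) from Theorem~\ref{thm.e2}, and part (i) by combining self-adjointness of $\bH$ (ruling out non-real eigenvalues, hence settling all $k$ with $\arg k\neq0$) with the constancy of the sesquilinear Wronskian and the flux computation — zero at $x=0$ via \eqref{eq:wr4}, equal to $4\ii k$ at infinity via the clean asymptotics of $\myF_{+\ii}$ from Lemma~\ref{lma:t6}(i). The only difference is organizational: the paper treats the converse direction uniformly, taking a general combination $a\myF_{+\ii}+b\myF_{-1}$ annihilated by $L_\alpha^\perp$ and concluding $a=0$ from the same Wronskian identity, whereas you split off the ray $k>0$ and argue by contradiction after normalising the coefficient of $\myF_{+\ii}$ to one.
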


\begin{proof}
\emph{Part (i):} First suppose $\lambda>\delta^2$ is an eigenvalue of $\bH$. Setting $k=\sqrt{\lambda}>0$, we see from Theorem \ref{thm.e1} that out of the four linearly independent Jost solutions $\oldF_{\pm1}$, $\oldF_{\pm\ii}$, only $\oldF_{-1}$ is square-integrable, and so it must satisfy the boundary condition at zero $L_\alpha^\perp(\oldF_{-1})=0$. We recall that $\oldF_{-1}=\myF_{-1}$ for $k>0$. 
From here we find that $\det L_{\alpha}^{\perp}(E(\cdot,k))=0$. 

Conversely, suppose $\det L_{\alpha}^{\perp}(E(\cdot,k))=0$ for some $k\in\overline{S_\delta}$. 
It follows that
\[
a L_\alpha^\perp(\myF_{+\ii})+b L_\alpha^\perp(\myF_{-1})=0
\]
for some $a,b\in\bbC$ not simultaneously equal to zero. We denote 
\[
\widetilde{F}=a\myF_{+\ii}+b\myF_{-1}.
\]
If $\Im k>0$, then $\widetilde{F}$ is square-integrable and since $L_\alpha^\perp(\widetilde{F})=0$, $k^{2}$ would be a~non-real eigenvalue of the self-adjoint operator $\bH$. This is impossible and so $\Im k=0$. Since $k\in\overline{S_{\delta}}$, we conclude that $k>0$. 

Next, we apply a variant of the constancy of Wronskian argument. With the notation \eqref{eq:wr-def}, the Wronskian $[\widetilde{F},\widetilde{F}](x)$ is independent of $x$ (here it is important that $k^2$ is real). Using the asymptotics of $\widetilde{F}$ as $x\to\infty$, see Lemma~\ref{lma:t6}, one computes that 
\[
\lim_{x\to\infty}[\widetilde{F},\widetilde{F}](x)=4\ii k|a|^{2}. 
\]
On the other hand, using that $L_\alpha^\perp(\widetilde{F})=0$, from \eqref{eq:wr4} we find that $[\widetilde{F},\widetilde{F}](0)=0$. Therefore $a=0$ and $\widetilde{F}$ is a non-zero multiple of the exponentially decaying solution $\myF_{-1}$. It follows that $k^{2}$ is an eigenvalue of $\bH$.

\emph{Part (ii):} The claim follows from Theorem~\ref{thm.e2} because $\det L_{\alpha}^{\perp}(E(\cdot,k))$ is a~combination of ${\myF}_{-1}(0,k;r)$, ${\myF}_{+\ii}(0,k;r)$ and the derivatives ${\myF}_{-1}'(0,k;r)$, ${\myF}_{+\ii}'(0,k;r)$.
The proof is complete. 
\end{proof}

\subsection{Representation for the $M$-function in terms of Jost solutions}
Let $\delta>0$ and $r\geq0$ be such that \eqref{e38} holds and let $E$ be defined by \eqref{e8b} for $k\in\overline{S_\delta}$. 

\begin{lemma}\label{lma:e3a}
For any $k\in S_\delta$, the matrix $L_\alpha^\perp(E(\cdot,k))$ is non-singular and 
\begin{align}
M_\alpha(\lambda)=L_\alpha(E(\cdot,k))( L_\alpha^\perp(E(\cdot,k)))^{-1}\epsilon, \quad \lambda=k^2.
\label{e4}
\end{align}
\end{lemma}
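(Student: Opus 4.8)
The plan is to recognise the matrix $E$ as the Weyl solution matrix $X$ of Proposition~\ref{prp.Mf}, up to a constant right factor, and then read off $M_\alpha$ from the boundary data. Fix $k\in S_\delta$; writing $k=a+\ii b$ with $a>b>0$ gives $\lambda=k^2$ with $\Im\lambda=2ab>0$, so Proposition~\ref{prp.Mf} applies and furnishes the matrix $X=\Theta-\Phi M_\alpha(\lambda)$ whose columns lie in $L^2(\bbR_+;\bbC^2)$. Applying the boundary operators to \eqref{eq:X} and using the Cauchy data \eqref{eq:phi,theta_bc_cond} together with the right-linearity of $L_\alpha$, $L_\alpha^\perp$ (immediate from \eqref{eq:nsa6v}, \eqref{eq:nsa7v}), I obtain $L_\alpha(X)=M_\alpha(\lambda)$ and $L_\alpha^\perp(X)=\epsilon$.

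First I would establish that the columns of $E$, namely $\oldF_{+\ii}$ and $\oldF_{-1}$, form a basis of the space of $L^2$-solutions of \eqref{e30}. By Theorem~\ref{thm.e1} the four Jost solutions span the four-dimensional solution space and are linearly independent. For $k\in S_\delta$ the four exponents $\pm k,\pm\ii k$ have distinct real parts $\pm a,\pm b$, and exactly the two solutions $\oldF_{+\ii}$, $\oldF_{-1}$ decay while $\oldF_{+1}$, $\oldF_{-\ii}$ grow; since these rates are distinct, a dominant-balance argument shows that a linear combination is square-integrable if and only if the coefficients of the two growing solutions vanish. Thus the $L^2$-solution space is exactly two-dimensional and coincides with the span of the columns of $E$.

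Consequently every column of $X$ is a linear combination of the columns of $E$, i.e. $X=EC$ for a unique constant matrix $C$. Since $C$ is constant, $L_\alpha(EC)=L_\alpha(E)C$ and $L_\alpha^\perp(EC)=L_\alpha^\perp(E)C$, so comparison with the boundary data of $X$ yields the two identities $L_\alpha^\perp(E)\,C=\epsilon$ and $L_\alpha(E)\,C=M_\alpha(\lambda)$. Taking determinants in the first and using $\det\epsilon=-1\neq0$ forces $\det L_\alpha^\perp(E)\neq0$, which proves non-singularity; then $C=(L_\alpha^\perp(E))^{-1}\epsilon$, and substitution into the second identity gives exactly \eqref{e4}.

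The only non-routine ingredient is the dimension count of the second paragraph; everything afterwards is linear algebra and bookkeeping of boundary values. The reason the lemma is confined to the open sector $S_\delta$ is precisely that this count needs $\Im\lambda>0$: it guarantees both the applicability of Proposition~\ref{prp.Mf} and the clean two-plus-two separation of decaying and growing Jost solutions, which degenerates on the Stokes rays $\arg k=0$ and $\arg k=\pi/4$ bounding the sector.
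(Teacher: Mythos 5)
Your proof is correct and follows essentially the same route as the paper's: both identify the boundary data $L_\alpha(X)=M_\alpha(\lambda)$, $L_\alpha^\perp(X)=\epsilon$ of the Weyl solution $X$ from Proposition~\ref{prp.Mf}, write $X=EC$ with a constant matrix $C$ using that the columns of $E$ span the space of $L^2$-solutions, and then cancel $C$. The only differences lie in how the two auxiliary facts are obtained, and both of your versions work. Where the paper cites Lemma~\ref{lem:dim_S} for the two-dimensionality of the $L^2$-solution space, you rederive it by the dominant-balance argument on the four exponents $\pm k$, $\pm\ii k$; this is exactly the argument underlying that lemma for $\lambda$ in the first quadrant, so no content is lost. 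Where the paper obtains the non-singularity of $L_\alpha^\perp(E(\cdot,k))$ from Lemma~\ref{lma:e3}(i) (a vanishing determinant would produce a non-real eigenvalue of the self-adjoint operator $\bH$), you get it for free by taking determinants in the identity $L_\alpha^\perp(E)\,C=\epsilon$. The latter is a small genuine economy: your proof of this lemma does not use Lemma~\ref{lma:e3}(i) at all, whereas the paper needs that lemma anyway (for the limit onto the real axis in the proof of Theorem~\ref{thm.5.3}) and simply reuses it here.
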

\begin{proof}
The matrix $L_\alpha^\perp(E(\cdot,k))$ is non-singular in $S_\delta$ by Lemma~\ref{lma:e3}(i). Consider the solution $X(x,\lambda)$ defined by \eqref{eq:X}. We observe that by the Cauchy data \eqref{eq:phi,theta_bc_cond} for $\Theta$, $\Phi$, we have
\begin{align}
L_\alpha(X)&=L_\alpha(\Theta)-L_\alpha(\Phi)M_\alpha(\lambda)=M_\alpha(\lambda), \label{eq:L_X}
\\
L_\alpha^\perp(X)&=L_\alpha^\perp(\Theta)-L_\alpha^\perp(\Phi)M_\alpha(\lambda)=\epsilon, \label{eq:L_X_perp}
\end{align}
and so we can write
\begin{equation}
M_\alpha(\lambda)=L_\alpha(X)(L_\alpha^\perp(X))^{-1}\epsilon. 
\label{e3}
\end{equation}
Next, from the asymptotics of Theorem~\ref{thm.e1} we see that the solutions $\myF_{-1}$ and $\myF_{+\ii}$ span the two-dimensional linear space of all solutions that belong to $L^2(\bbR_+;\bbC^2)$, see Lemma~\ref{lem:dim_S}. Thus, there is a
non-singular matrix $D$ (which depends on $k$) such that
\[
X(x,k^2)=E(x,k)D.
\]
After substitution into \eqref{e3}, the matrix $D$ cancels out, and we arrive at \eqref{e4}. 
\end{proof}

We note that our definition \eqref{e8b} of $E$ depends on the choice of the parameter~$r$. However, the $M$-function is independent of $r$.

\subsection{Proof of Theorem~\ref{thm.a1}}

\emph{Part (i):}
First let us discuss $\lambda=0$. If $0$ is an eigenvalue of $H$, then, by taking the complex conjugate of the corresponding eigenfunction, we see that $0$ is also the eigenvalue of $H^*$. From the definition of $\bH$ (or from \eqref{eq:e30b})  it follows that $0$ is an eigenvalue of $\bH$ of multiplicity $\geq2$. But the multiplicity of the eigenvalues of $\bH$ cannot be $>2$ because there are only two linearly independent solutions of the eigenvalue equation satisfying the boundary condition at zero. 

Let us prove that all eigenvalues  $\lambda\not=0$ are simple. By \eqref{eq:e30b}, it suffices to consider $\lambda>0$. In this case, setting $k=\sqrt{\lambda}>0$, by the asymptotics of Theorem~\ref{thm.e1}, only one of the four linearly independent solutions of \eqref{e30} belongs to $L^2(\bbR_{+};\bbC^{2})$, and so $\lambda$ can only be a simple eigenvalue. Moreover, $\lambda>0$ is an eigenvalue if and only if $L_\alpha^\perp(F_{-1})=0$. 

Let us check that the set of eigenvalues is bounded. Inspecting the asymptotics \eqref{eq:F-1-k} and \eqref{eq:F-1-prime-k} of ${\myF}_{-1}$ for large $k>0$, we find that condition $L_\alpha^\perp(\myF_{-1})=0$ is incompatible with this asymptotics. 

Next, we prove that the eigenvalues cannot accumulate outside zero by using a standard compactness argument. For a contradiction, suppose that we have a sequence of distinct positive eigenvalues $\lambda_n\to\lambda_*$ with $\lambda_*>0$. Let us denote $k_n=\sqrt{\lambda_n}$,  $k_*=\sqrt{\lambda_*}$ and $g_n(x)=\myF_{-1}(x,k_n)$, $g_*(x)=\myF_{-1}(x,k_*)$.  
Since each $g_n$ satisfies the boundary condition \eqref{eq:a2a0} at zero and $F_{-1}(x,k)$ and $F_{-1}'(x,k)$ are continuous in $k$ by Theorem~\ref{thm.e2}, we find that $g_*$ also satisfies the boundary condition, hence $\lambda_*$ is also an eigenvalue. Let us prove the relation 
\begin{equation}
\lim_{n\to\infty}\int_0^\infty {\jap{g_n(x),g_*(x)}}_{\bbC^2}\,\dd x=\norm{g_*}_{L^2}^2>0;
\label{e51}
\end{equation}
this will give a contradiction with the orthogonality of the eigenvectors $g_n$, $g_*$ for all $n$. 
Recalling our notation from~\eqref{eq:t3}, we have
\[
g_n(x)=\myF_{-1}(x,k_n)=\ee^{-k_n x}\,w_{-1}(x,k_n). 
\]
With this notation, the integral in \eqref{e51} can be written as 
\begin{equation}
\int_0^\infty \ee^{-(k_*+k_n)x}{\jap{w_{-1}(x,k_n),w_{-1}(x,k_*)}}_{\bbC^2}\,\dd x.
\label{e53}
\end{equation}
As established in the proof of Theorem~\ref{thm.e2}, we have 
\[
\lim_{n\to\infty}w_{-1}(x,k_n)=w_{-1}(x,k_*)
\]
for all $x>0$, and the functions $w_{-1}(\cdot,k_n)$ are uniformly bounded. 
By the dominated convergence, the integral \eqref{e53} converges to $\norm{g_*}_{L^2}^2$. Thus, we get \eqref{e51} and the proof of part (i) is complete.

\emph{Parts (ii) and (iii):}
Let $\Delta\subset(0,\infty)$ be a compact interval that contains no eigenvalues of $\bH$. Fix $\delta>0$ such that $\Delta\subset [\delta^2,\infty)$ and let $r\geq0$ satisfy \eqref{e38}. 
Let~$\lambda$ be in the closed strip $\Re \lambda\in\Delta$ and $\Im \lambda\in[0,\eps]$ with $\eps>0$ small and suppose $k^2=\lambda$ with $k\in S_\delta$. 

Consider the expression for $M_\alpha(\lambda)$ in \eqref{e4}. By Theorem~\ref{thm.e2}, the matrices $E(0,k)$ and $E'(0,k)$ are continuous in $k$. By Lemma~\ref{lma:e3}(i), the matrix $L_\alpha^\perp(E(\cdot,k))$ is non-singular in our strip, and therefore its inverse is also continuous. It follows that $M_\alpha(\lambda)$ is continuous in our strip, including the boundary on the real axis. This implies that the measure $\Sigma$ on the interval $\Delta$ contains no singular part. It follows that the singular continuous spectrum of $\bH$ is absent. The remaining property that $\Im M_\alpha(\lambda+\ii0)$ has rank one is proved in the next subsection. The proof of Theorem~\ref{thm.a1} is complete.
\qed

\subsection{Proof of Theorem~\ref{thm.5.3}}

\emph{Part (i):} 
Let us first discuss the determinant $\det\{L_\alpha^\perp(\oldF_{+\ii}),L_\alpha^\perp(\oldF_{-1})\}$ in the right-hand side of \eqref{ee5}. We recall that for $k>0$, the Jost solution $\oldF_{-1}$ is unambiguously defined, while $\oldF_{+\ii}$ is defined up to an additive term $c\oldF_{-1}$, with $c\in\bbC$. This term cancels out in the determinant, and so the determinant is unambiguously defined. 

Next, regarding $\oldF_{+\ii}$ and $\oldF_{-1}$ as solutions classes, we have $\myF_{+\ii}\in\oldF_{+\ii}$ and $\myF_{-1}\in\oldF_{-1}$, and so we can replace $\det\{L_\alpha^\perp(\oldF_{+\ii}),L_\alpha^\perp(\oldF_{-1})\}$ by $\det\{L_\alpha^\perp(\myF_{+\ii}),L_\alpha^\perp(\myF_{-1})\}$ in \eqref{ee5}. Here $\myF_{j}=\myF_{j}(x,k;r)$ for any  $r$ such that \eqref{e38} is satisfied with $\delta=k$.

Let $k>0$ and suppose that $\lambda=k^{2}$ is not an eigenvalue of $\bH$. Passing to the limit in \eqref{e4}, we find
\[
M_\alpha(\lambda+\ii 0)=L_\alpha(E(\cdot,k))( L_\alpha^\perp(E(\cdot,k)))^{-1}\epsilon,
\]
where $E$ is as in \eqref{e8b}.
As explained in the proof of Theorem~\ref{thm.a1} above, we are allowed to pass to the limit by Theorem~\ref{thm.e2}, as $E$ is continuous in $k$ and $L_\alpha^\perp(E(\cdot,k))$ is non-singular (by Lemma~\ref{lma:e3}(i)). Taking the imaginary parts, we find
\[
2\ii \Im M_\alpha(\lambda+\ii 0)
=
\epsilon (B^*)^{-1}(B^*\epsilon C-C^*\epsilon B)B^{-1}\epsilon, 
\]
where we temporarily denote
\begin{align*}
B=L_\alpha^\perp(E(\cdot,k)),
\quad
C=L_\alpha(E(\cdot,k)).
\end{align*}
With a little of elementary matrix algebra (compare with \eqref{eq:wr4}) we compute 
\[
B^*\epsilon C-C^*\epsilon B=E(0,k)^*\epsilon E'(0,k)-E'(0,k)^*\epsilon E(0,k).
\]
The expression in the right-hand side here is a variant of a Wronskian, and since $k^2$ is real, we find that 
\begin{equation}
E(x,k)^*\epsilon E'(x,k)-E'(x,k)^*\epsilon E(x,k)
\label{eq:t18}
\end{equation}
is independent of $x$. We recall that $E$ is defined in terms of the solutions $\myF_{-1}$ and $\myF_{+\ii}$, and these solutions satisfy the large $x$ asymptotics of Theorem~\ref{thm.e1} (see Lemma~\ref{lma:t6}(i)). Computing the limit of the expression \eqref{eq:t18} as $x\to\infty$ by using this asymptotics, we find that it equals $4\ii kP_1$, where $P_1$ is the projection onto $e_1$, i.e. 
\[
P_1=\begin{pmatrix}
    1 & 0 \\ 0 & 0
\end{pmatrix}.
\]
Putting this together gives
\[
\Im M_\alpha(\lambda+\ii0)
=
2k\, 
\epsilon (B^*)^{-1}P_1B^{-1}\epsilon.
\]
Notice that the matrix on the right is of rank one. Denoting the matrix elements
\[
B=\begin{pmatrix}a&b\\c&d\end{pmatrix},
\]
this yields
\[
\Im M_\alpha(\lambda+\ii0)
=
\frac{2k}{\abs{\det B}^2}
\begin{pmatrix}
\abs{b}^2&-\bar{b}d
\\
-b\bar{d}&\abs{d}^2
\end{pmatrix}.
\]
By \eqref{e8b} and \eqref{e50}, 
\[
\begin{pmatrix}
b\\ d
\end{pmatrix}
=L_\alpha^\perp(\myF_{-1})
=L_\alpha^\perp(\oldF_{-1})
=\begin{pmatrix}
\overline{\ell_{\alpha}^\perp(\newe)}\\-\ell_\alpha^\perp(\newe)
\end{pmatrix}.
\]
It follows that 
\[
\Im M_\alpha(\lambda+\ii0)
=
\frac{2k}{\abs{\det\{L_\alpha^\perp(\oldF_{+\ii}),L_\alpha^\perp(\oldF_{-1})\}}^2}
\begin{pmatrix}
\abs{\ell_\alpha^\perp(\newe)}^2& \ell_\alpha^\perp(\newe)^2
\\
\overline{\ell_\alpha^\perp(\newe)}^2& \abs{\ell_\alpha^\perp(\newe)}^2
\end{pmatrix}.
\]
From here we obtain 
\[
\frac{\dd\Sigma}{\dd\lambda}(\lambda)=
\frac{1}{\pi}\frac{2k}{\abs{\det\{L_\alpha^\perp(\oldF_{+\ii}),L_\alpha^\perp(\oldF_{-1})\}}^2}
\begin{pmatrix}
\abs{\ell_\alpha^\perp(\newe)}^2& \ell_\alpha^\perp(\newe)^2
\\
\overline{\ell_\alpha^\perp(\newe)}^2& \abs{\ell_\alpha^\perp(\newe)}^2
\end{pmatrix},
\]
and using \eqref{eq:a1}, we directly deduce the formulas \eqref{ee5}. 

\emph{Part (ii):}
Suppose $k>0$ and $\lambda=k^{2}$ is an eigenvalue of $\bH$. By \cite[Lemma 10.2]{PS3}, we have
\[
\Sigma(\{\lambda\})=\frac{\langle\cdot, L_{\alpha}(\oldF_{-1})\rangle_{\bbC^{2}}}{\|\oldF_{-1}\|_{L^{2}}^{2}}L_{\alpha}(\oldF_{-1}).
\]
Using formula~\eqref{e50} for $\oldF_{-1}$, we find that
\[
\Sigma(\{\lambda\})=\frac{1}{2\|e\|^{2}}\begin{pmatrix}
\abs{\ell_\alpha(\newe)}^2& -\overline{\ell_\alpha(\newe)}^2
\\
-\ell_\alpha(\newe)^2& \abs{\ell_\alpha(\newe)}^2
\end{pmatrix},
\]
and the formulas \eqref{eq:nsa8} immediately follow from \eqref{eq:a1}.
The proof of Theorem~\ref{thm.5.3} is complete. \qed

\subsection{The self-adjoint case revisited}
Let us discuss the case of $q=\overline{q}\in L^1(\bbR_+)$ and $\alpha\in\bbR\cup\{\infty\}$.
In this case, formulas~\eqref{ee5} and~\eqref{eq:nsa8} for the spectral pair simplify. We first note that the scalar Jost solution $\newe$ of Proposition~\ref{cr.e3} is real-valued. 

Let $\lambda>0$. First we discuss $\psi(\lambda)$. Since both $\ell_{\alpha}(\newe)$ in \eqref{eq:nsa8} and $\ell_\alpha^\perp(\newe)$ in \eqref{ee5} are real, we get
\[
\psi(\lambda)=
\begin{cases}
-1, \quad &\mbox{ if } \lambda\in\sigma_{\mathrm p}(\bH), \\
1, \quad &\mbox{ if } \lambda\notin\sigma_{\mathrm p}(\bH).
\end{cases}
\]

For $\lambda>0$ an eigenvalue of $\bH$, the formula for $\nu(\{\lambda\})$ from \eqref{eq:nsa8} remains unchanged. However, 
if $\lambda>0$ is not an eigenvalue of $\bH$, the formula for the density $\dd\nu/\dd\lambda$ from \eqref{ee5} simplifies.
First observe that in the self-adjoint case we can write 
\[
f_{+\ii}(x,k)e_{+}\in\oldF_{+\ii}(x,k), \quad k=\sqrt{\lambda}>0,
\]
where $f_{+\ii}$ is the Jost solution of~\eqref{eq:eveq} with the asymptotics~\eqref{eq:sa4}. With this notation, the formula for $\dd\nu/\dd\lambda$ simplifies to
\[
\frac{\dd\nu}{\dd\lambda}(\lambda)=\frac{k}{2\pi}\frac1{\abs{\ell_\alpha^\perp(f_{+\ii}(\cdot,k))}^2}.
\]
Notice that this is in agreement with the formula for the spectral density~\eqref{eq:sa2} since $\dd\sigma(\lambda)=2\dd\nu(\lambda)$ by \cite[Theorem~1.7]{PS3}.

\section{Scattering theory perspective}\label{sec:wr}
The aim of this section is to prove Theorem~\ref{thm:a4}. 

\subsection{Symmetry relations and basis expansions}
We will be using the Wronskians \eqref{eq:wr-def}. Using \eqref{eq:wr4} and the boundary conditions \eqref{eq:wr5}, for any $F$ and $j=1,2$, we find
\begin{align}
[\Phi_j,F](0)=-\jap{\eps e_j,L_\alpha^\perp(F)}=-\jap{e_{3-j},L_\alpha^\perp(F)}.
\label{eq:wr6}
\end{align}
Using the asymptotics of Theorem~\ref{thm.e1}, let us compute the Wronskians of the Jost solutions $\oldF_{\pm\ii}$, $\oldF_{\pm1}$ at infinity:
\begin{align*}
[\oldF_{-1},\oldF_{-1}]&=[\oldF_{-1},\oldF_{+\ii}]=[\oldF_{-1},\oldF_{-\ii}]=0,
\\
[\oldF_{-1},\oldF_{+1}]&=4k,
\\
[\oldF_{+\ii},\oldF_{+\ii}]&=-[\oldF_{-\ii},\oldF_{-\ii}]=4\ii k,
\\
[\oldF_{+\ii},\oldF_{-\ii}]&=[\oldF_{-\ii},\oldF_{+\ii}]=0.
\end{align*}
It is not possible to compute the rest of the Wronskians unambiguously. 

Using this and \eqref{eq:wr6}, we compute the Wronskians of both sides of \eqref{eq:ge1}, \eqref{eq:ge2} with each of the solutions $\oldF_{-1}$, $\oldF_{+\ii}$:
\begin{alignat}{3}
&[\Phi_2,\oldF_{-1}]:\qquad
&&\jap{e_1,L_\alpha^\perp(\oldF_{-1})}
&&=
4k\gamma_{+1}^2,
\label{eq:wr15}
\\
&[\Phi_1,\oldF_{-1}]:\qquad
&&\jap{e_2,L_\alpha^\perp(\oldF_{-1})}
&&=
4k\gamma_{+1}^1,
\label{eq:wr16}
\\
&[\Phi_2,\oldF_{+\ii}]:\qquad
-&&\jap{e_{1},L_\alpha^\perp(\oldF_{+\ii})}
&&=4\ii k\gamma_{+\ii}^2+\gamma_{+1}^2[\oldF_{+1},\oldF_{+\ii}],
\label{eq:wr25}
\\
&[\Phi_1,\oldF_{+\ii}]:\qquad
-&&\jap{e_{2},L_\alpha^\perp(\oldF_{+\ii})}
&&=4\ii k\gamma_{+\ii}^1+\gamma_{+1}^1[\oldF_{+1},\oldF_{+\ii}].
\label{eq:wr26}
\end{alignat}
Next, we relate $\ell_\alpha^\perp(\newe)$ to the coefficients $\gamma$. By the special structure \eqref{e50} of $\oldF_{-1}$ and the definition \eqref{eq:e30c} of $L_\alpha^\perp$,  we have 
\[
L_\alpha^\perp(\oldF_{-1})
=
\begin{pmatrix}
\overline{\ell_{\alpha}^\perp(\newe)}
\\
-\ell_\alpha^\perp(\newe)
\end{pmatrix},
\]
and so \eqref{eq:wr15} and \eqref{eq:wr16} yield
\begin{align}
4k\gamma_{+1}^1&=\overline{\jap{L_\alpha^\perp(\oldF_{-1}),e_2}}=-\overline{\ell_\alpha^\perp(\newe)},
\label{eq:wr13}
\\
4k\gamma_{+1}^2&=\overline{\jap{L_\alpha^\perp(\oldF_{-1}),e_1}}=\ell_\alpha^\perp(\newe),
\label{eq:wr14}
\end{align}
which yields \eqref{eq:ge3}.

\subsection{Proof of Theorem~\ref{thm:a4}(i):}

Suppose $\lambda>0$ is not an eigenvalue of $\bH$.

Formula \eqref{ee5} contains the determinant $\det\{L_\alpha^\perp(\oldF_{+\ii}),L_\alpha^\perp(\oldF_{-1})\}$. Let us compute it in terms of the coefficients $\gamma$. 
It will be convenient to compute the complex conjugate of this determinant:
\begin{align*}
\overline{\det\{L_\alpha^\perp(\oldF_{+\ii}),L_\alpha^\perp(\oldF_{-1})\}}
=
\jap{e_1,L_\alpha^\perp(\oldF_{+\ii})}\!\jap{e_2,L_\alpha^\perp(\oldF_{-1})}
-
\jap{e_2,L_\alpha^\perp(\oldF_{+\ii})}\!\jap{e_1,L_\alpha^\perp(\oldF_{-1})}.
\end{align*}
When we use equations \eqref{eq:wr15}--\eqref{eq:wr26} to compute the determinant, we see that the terms containing $[\oldF_{+1},\oldF_{+\ii}]$ cancel, and we obtain 
\begin{equation}
\overline{\det\{L_\alpha^\perp(\oldF_{+\ii}),L_\alpha^\perp(\oldF_{-1})\}}
=
16\ii k^2(\gamma_{+1}^2\gamma_{+\ii}^1-\gamma_{+1}^1\gamma_{+\ii}^2). 
\label{eq:wr17}
\end{equation}
In particular, the expression $\gamma_{+1}^2\gamma_{+\ii}^1-\gamma_{+1}^1\gamma_{+\ii}^2$ in the right-hand side is unambiguously defined and non-vanishing by Theorem~\ref{thm.5.3}(i).

Since $\lambda>0$ is not an eigenvalue of $\bH$, we have $\ell_\alpha^\perp(\newe)\not=0$ by Proposition~\ref{cr.e3}, and so $\gamma_{+1}^1$ is non-zero by \eqref{eq:wr13}. Finally,  substituting  \eqref{eq:wr17} and \eqref{eq:wr13} into \eqref{ee5}, we obtain claim~(i) of Theorem~\ref{thm:a4}.

\subsection{Proof of Theorem~\ref{thm:a4}(ii):}

Suppose $\lambda>0$ is an eigenvalue of $\bH$.
Let us rewrite formula \eqref{eq:nsa8} for the spectral pair in terms of the coefficients $\gamma$. 

We recall that $\lambda>0$ is an eigenvalue if and only if $\ell_\alpha^\perp(\newe)=0$. By \eqref{eq:wr13}, \eqref{eq:wr14} we see that in our case
\[
\gamma_{+1}^1=\gamma_{+1}^2=0,
\]
and so the expansions \eqref{eq:ge1}, \eqref{eq:ge2} do not contain the terms with $\oldF_{+1}$ on the right-hand side. It follows that the coefficients $\gamma_{\pm\ii}^1$, $\gamma_{\pm\ii}^2$ are unambiguously defined and the only ambiguity is in the coefficients $\gamma_{-1}^1$, $\gamma_{-1}^2$.

We observe that $\lambda>0$ is an eigenvalue if and only if there is a non-trivial linear combination $a_1\Phi_1+a_2\Phi_2$ that belongs to $L^2(\bbR_+)$. This means that for these $a_1$ and $a_2$, the coefficients in the expansions \eqref{eq:ge1}, \eqref{eq:ge2} satisfy 
\begin{align}
a_1\gamma_{+\ii}^1+a_2\gamma_{+\ii}^2&=0,
\label{eq:wr19}
\\
a_1\gamma_{-\ii}^1+a_2\gamma_{-\ii}^2&=0.
\label{eq:wr20}
\end{align}
\begin{lemma}
Under the above assumptions, all coefficients $a_1$, $a_2$, $\gamma_{\pm\ii}^1$, $\gamma_{\pm\ii}^2$ are non-zero. Moreover, we have 
\begin{equation}
\gamma_{+\ii}^2=\overline{\gamma_{-\ii}^1},
\quad
\gamma_{-\ii}^2=\overline{\gamma_{+\ii}^1}
\label{eq:wr21}
\end{equation}
and 
\begin{equation}
\abs{\gamma_{+\ii}^1}=\abs{\gamma_{-\ii}^1}
=
\abs{\gamma_{+\ii}^2}=\abs{\gamma_{-\ii}^2}. 
\label{eq:wr21a}
\end{equation}
\end{lemma}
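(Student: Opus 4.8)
The plan is to exploit the anti-linear symmetry of \eqref{e30} noted in the proof of Proposition~\ref{cr.e3}: for real $k^2$ and $Q$ as in \eqref{eq:a2}, if $F$ solves \eqref{e30} then so does $\epsilon\overline{F}$. First I would show that this symmetry interchanges the two regular solutions, namely
\[
\Phi_2=\epsilon\overline{\Phi_1}, \qquad \Phi_1=\epsilon\overline{\Phi_2}.
\]
Since both sides solve \eqref{eq:init_diff_eq} with real $\lambda=k^2$, it suffices to check that they share the same Cauchy data \eqref{eq:phi,theta_bc_cond}. Writing $G=\epsilon\overline{\Phi_1}$, so that $G_1=\overline{(\Phi_1)_2}$ and $G_2=\overline{(\Phi_1)_1}$, a direct computation from the definitions \eqref{eq:nsa6}, \eqref{eq:e30c} gives the conjugation rules $\ell_{\overline\alpha}(\overline f)=\overline{\ell_\alpha(f)}$, $\ell_\alpha(\overline f)=\overline{\ell_{\overline\alpha}(f)}$ and the analogous identities for $\ell^\perp$. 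Feeding these into $L_\alpha(G)$ and $L_\alpha^\perp(G)$ and using $L_\alpha(\Phi_1)=-e_1$, $L_\alpha^\perp(\Phi_1)=0$ yields $L_\alpha(G)=-e_2$ and $L_\alpha^\perp(G)=0$, which are exactly the Cauchy data of $\Phi_2$; the Dirichlet case $\alpha=\infty$ is analogous. Hence $G=\Phi_2$.

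Next I would transport this symmetry to the basis expansions. Inspecting the asymptotics \eqref{eq:F-1}--\eqref{eq:F+i} for $k>0$ shows that $F\mapsto\epsilon\overline{F}$ acts on the Jost classes by $\epsilon\overline{\oldF_{-1}}=-\oldF_{-1}$, $\epsilon\overline{\oldF_{+\ii}}\in\oldF_{-\ii}$ and $\epsilon\overline{\oldF_{-\ii}}\in\oldF_{+\ii}$ (using $\overline{e_\pm}=e_\pm$, $\epsilon e_+=e_+$ and $\overline{k}=k$). Applying $\epsilon\overline{\cdot}$ to \eqref{eq:ge1} (with $\gamma_{+1}^1=0$ in the eigenvalue case) and using $\Phi_2=\epsilon\overline{\Phi_1}$ produces an expansion of $\Phi_2$ whose oscillatory coefficients are $\overline{\gamma_{-\ii}^1}$ in front of $\oldF_{+\ii}$ and $\overline{\gamma_{+\ii}^1}$ in front of $\oldF_{-\ii}$, any discrepancy in the choice of representatives being absorbed into the ambiguous $\oldF_{-1}$ term. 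Since the oscillatory coefficients $\gamma_{\pm\ii}^2$ are unambiguously defined, comparison with \eqref{eq:ge2} yields precisely the relations \eqref{eq:wr21}.

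It remains to establish the non-vanishing and the equalities of moduli. For the non-vanishing I would first rule out $L_\alpha^\perp(\oldF_{+\ii})=0$ by a constancy-of-the-Wronskian argument as in Lemma~\ref{lma:e3}(i): since $k^2$ is real, $[\oldF_{+\ii},\oldF_{+\ii}]$ is $x$-independent and equals $4\ii k\neq0$ at infinity, whereas $L_\alpha^\perp(\oldF_{+\ii})=0$ would force it to vanish at $x=0$ via \eqref{eq:wr4}. Through \eqref{eq:wr25}, \eqref{eq:wr26} (with $\gamma_{+1}^j=0$) the pair $(\gamma_{+\ii}^2,\gamma_{+\ii}^1)$ is proportional to the components of $L_\alpha^\perp(\oldF_{+\ii})$, so at least one is nonzero; the symmetry \eqref{eq:wr21} gives the same for $(\gamma_{-\ii}^1,\gamma_{-\ii}^2)$. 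A short case analysis with the eigenvalue system \eqref{eq:wr19}, \eqref{eq:wr20} finishes the non-vanishing: if exactly one of $\gamma_{+\ii}^1,\gamma_{+\ii}^2$ vanished, then \eqref{eq:wr21} together with the two equations would force $a_1=a_2=0$, contradicting non-triviality; hence all four coefficients, and in turn $a_1,a_2$, are nonzero. Finally, solvability of \eqref{eq:wr19}, \eqref{eq:wr20} with $(a_1,a_2)\neq0$ forces the determinant $\gamma_{+\ii}^1\gamma_{-\ii}^2-\gamma_{+\ii}^2\gamma_{-\ii}^1$ to vanish; substituting \eqref{eq:wr21} turns this into $\abs{\gamma_{+\ii}^1}^2=\abs{\gamma_{-\ii}^1}^2$, and \eqref{eq:wr21} then propagates the equality to all four moduli, giving \eqref{eq:wr21a}.

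The main obstacle I anticipate is the bookkeeping in the second step: the classes $\oldF_{+\ii}$, $\oldF_{-\ii}$ are defined only up to additive multiples of the decaying solution $\oldF_{-1}$, so $\epsilon\overline{\oldF_{+\ii}}\in\oldF_{-\ii}$ holds only at the level of classes. The argument works precisely because, in the eigenvalue case, the coefficient of the growing mode $\oldF_{+1}$ vanishes and the oscillatory coefficients are therefore unambiguous; this is what lets the $\oldF_{-1}$-admixtures be discarded when matching coefficients. Establishing $\Phi_2=\epsilon\overline{\Phi_1}$ via the boundary-functional conjugation identities is routine but must be carried out carefully, in particular to confirm that it persists in the Dirichlet case.
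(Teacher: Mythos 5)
Your proposal is correct, and its core coincides with the paper's proof: the paper also obtains \eqref{eq:wr21} by applying the anti-linear symmetry $F\mapsto\epsilon\overline{F}$ (valid for real $k^2$ and $Q$ of the form \eqref{eq:a2}) to the regular solutions and the basis expansions, stating the symmetry as $\epsilon\overline{\Phi_2}=\Phi_1$, which is equivalent to your $\Phi_2=\epsilon\overline{\Phi_1}$. Where you genuinely diverge is in the non-vanishing and moduli claims. The paper evaluates $L_\alpha$ on the identity $a_1\Phi_1+a_2\Phi_2=(a_1\gamma_{-1}^1+a_2\gamma_{-1}^2)\oldF_{-1}$ and uses the structure \eqref{e50} of $\oldF_{-1}$ to get \eqref{eq:wr22}; this yields $\abs{a_1}=\abs{a_2}$ at once, hence $a_1,a_2\neq0$, then the non-vanishing of the $\gamma$'s follows from a collinearity contradiction (if one oscillatory coefficient vanished, all four would, making $\Phi_1,\Phi_2$ proportional to $\oldF_{-1}$), and \eqref{eq:wr21a} follows from $\abs{a_1}=\abs{a_2}$ combined with \eqref{eq:wr19}, \eqref{eq:wr20} and \eqref{eq:wr21}. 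You instead rule out $L_\alpha^\perp(\oldF_{+\ii})=0$ via constancy of the Wronskian $[\oldF_{+\ii},\oldF_{+\ii}]=4\ii k$, transfer this to the coefficients through \eqref{eq:wr25}, \eqref{eq:wr26} (legitimately, since $\gamma_{+1}^1=\gamma_{+1}^2=0$ kills the ambiguous Wronskian terms), and obtain \eqref{eq:wr21a} from the vanishing of the determinant of the homogeneous system \eqref{eq:wr19}, \eqref{eq:wr20} combined with \eqref{eq:wr21}; all of these steps are sound. The trade-off: your route stays entirely at the level of the $\gamma$-coefficients and never needs $\abs{a_1}=\abs{a_2}$ or the scalar Jost solution, while the paper's detour through \eqref{eq:wr22} is not wasted work, because that relation is reused immediately after the lemma to identify $\psi(\lambda)=a_1/a_2$ in \eqref{eq:wr23} and to derive the formula \eqref{eq:wr24} for $\overline{\ell_\alpha(\newe)}$; if one adopted your proof, that part of the argument for Theorem~\ref{thm:a4}(ii) would still have to be supplied separately.
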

\begin{proof}
We recall that if $F$ is a solution of the eigenvalue equation \eqref{e30} with $k^2$ real and $Q$ as in~\eqref{eq:a2}, then $\eps\overline{F}$ is also a solution of the same equation. Applying the transformation $X\mapsto \epsilon\overline{X}$ to equations \eqref{eq:ge1} and \eqref{eq:ge2} and inspecting the boundary conditions, we find that 
\[
\eps\overline{\Phi_2}=\Phi_1.
\]
From here the equations \eqref{eq:wr21} follow. 

In the equation
\[
a_1\Phi_1+a_2\Phi_2=(a_1\gamma_{-1}^1+a_2\gamma_{-1}^2)\oldF_{-1},
\]
let us evaluate $L_\alpha$ of both parts. Using \eqref{eq:wr5} and recalling the structure \eqref{e50} of $\oldF_{-1}$, it yields
\begin{equation}
\begin{pmatrix}a_1\\a_2\end{pmatrix}
=
C\begin{pmatrix}\overline{\ell_\alpha(\newe)}\\-\ell_\alpha(\newe)\end{pmatrix}, 
\quad
C=-(a_1\gamma_{-1}^1+a_2\gamma_{-1}^2).
\label{eq:wr22}
\end{equation}
This means, in particular, that $\abs{a_1}=\abs{a_2}$. Since by assumption $a_1$ and $a_2$ are not simultaneously zero, we get that they are both non-zero. 

Suppose $\gamma_{+\ii}^1=0$. 
By the first equation in \eqref{eq:wr19} we get $\gamma_{+\ii}^2=0$. By \eqref{eq:wr21}, we find $\gamma_{-\ii}^1=\gamma_{-\ii}^2=0$. Coming back to the expansions \eqref{eq:ge1}, \eqref{eq:ge2}, we see that $\Phi_1$ and $\Phi_2$ are collinear, which is incompatible with the boundary conditions~\eqref{eq:wr5} for these solutions. This is a contradiction, so $\gamma_{+\ii}^1\not=0$. The same argument shows that none of the coefficients  $\gamma_{\pm\ii}^1$, $\gamma_{\pm\ii}^2$ vanishes. 
Finally, since $a_1$ and $a_2$ are non-vanishing, from \eqref{eq:wr19},  \eqref{eq:wr20} and \eqref{eq:wr21} we get \eqref{eq:wr21a}. 
\end{proof}

Now we can complete the proof of Theorem~\ref{thm:a4}(ii). 
Let us come back to \eqref{eq:wr22}. Bearing~\eqref{eq:nsa8} in mind, it shows that 
\[
\psi(\lambda)=
-\frac{\overline{\ell_\alpha(\newe)}}{\ell_\alpha(\newe)}=\frac{a_1}{a_2}. 
\]
From here and  \eqref{eq:wr19}, \eqref{eq:wr20} we find 
\begin{equation}
\psi(\lambda)
=\frac{a_1}{a_2}
=-\frac{\gamma_{+\ii}^2}{\gamma_{+\ii}^1}
=-\frac{\gamma_{-\ii}^2}{\gamma_{-\ii}^1}.
\label{eq:wr23}
\end{equation}

Next, let us express the numerator $\abs{\ell_\alpha(\newe)}^2$ in the first formula in \eqref{eq:nsa8} in terms of the coefficients $\gamma$. Consider the first equation in \eqref{eq:wr22}:
\[
a_1=-(a_1\gamma_{-1}^1+a_2\gamma_{-1}^2)\overline{\ell_\alpha(\newe)}.
\]
Dividing by $a_1$ and using \eqref{eq:wr23} yields
\[
1=-(\gamma_{-1}^1-\frac{\gamma_{-\ii}^1}{\gamma_{-\ii}^2}\gamma_{-1}^2)\overline{ \ell_\alpha(\newe)}.
\]
It follows that $\gamma_{-\ii}^1\gamma_{-1}^2-\gamma_{-1}^1\gamma_{-\ii}^2\neq0$ and
\begin{equation}
\overline{\ell_\alpha(\newe)}
=\frac{\gamma_{-\ii}^2}{\gamma_{-\ii}^1\gamma_{-1}^2-\gamma_{-1}^1\gamma_{-\ii}^2}.
\label{eq:wr24}
\end{equation}
Plugging this into the first formula from~\eqref{eq:nsa8} and using \eqref{eq:wr21a}, we obtain
\[
\nu(\{\lambda\})=
\frac{1}{2\norm{\newe}^2}
\frac{\abs{\gamma_{+\ii}^1}^2}{\abs{\gamma_{-1}^1\gamma_{-\ii}^2-\gamma_{-\ii}^1\gamma_{-1}^2}^2}.
\]
Moreover, the equation \eqref{eq:wr24} also proves that the denominator $\gamma_{-1}^1\gamma_{-\ii}^2-\gamma_{-\ii}^1\gamma_{-1}^2$
is unambiguously defined, because both $\gamma_{-\ii}^2$ and $\ell_\alpha(\newe)$ are unambiguously defined. 
The proof of Theorem~\ref{thm:a4} is complete.

\section{The Born approximation for the spectral pair}\label{sec:Born}

Here we prove Proposition~\ref{prop:Born}. In fact, we give a more general set of formulas for any boundary parameter $\alpha\in\bbR\cup\{\infty\}$. Below for readability we write $\calO(q^2)$ in place of $\calO(\norm{q}_{L^1}^2)$. 

\subsection{The self-adjoint case}

As a motivation and for comparison, we first briefly deduce the Born approximations for the spectral measure on $\bbR_{+}$ of self-adjoint $H$ with $q=\overline{q}\in L^{1}(\bbR_{+})$ and $\alpha\in\bbR\cup\{\infty\}$. From the integral equation for the Jost solution $f_{+\ii}$, 
\[
f_{+\ii}(x,k)=\ee^{\ii kx}-\frac1k\int_x^\infty q(y)\sin(k(x-y))f_{+\ii}(y,k) \dd y,
\]
we find that
\[
f_{+\ii}(x,k)=\ee^{\ii kx}-\frac1k\int_x^\infty q(y)\sin(k(x-y)) \ee^{\ii ky} \dd y+\calO(q^2), 
\]
and so 
\begin{align*}
f_{+\ii}(0,k)&=1+\frac1k\int_{0}^{\infty}\ee^{\ii ky}\sin (ky)q(y)\dd y+\calO(q^2), \\
f_{+\ii}'(0,k)&=\ii k-\int_{0}^{\infty}\ee^{\ii ky}\cos(ky)q(y)\dd y+\calO(q^{2}).
\end{align*}
Using definition~\eqref{eq:sa6}, we find for $\alpha\in\bbR$, that
\[
\ell_{\alpha}^{\perp}(f_{+\ii})=\frac{1}{\sqrt{1+\alpha^{2}}}\left(\alpha+\ii k+\frac{1}{k}\int_{0}^{\infty}\ee^{\ii ky}\left[\alpha\sin(ky)-k\cos(ky)\right]q(y)\dd y\right)+\calO(q^{2}).
\]
This yields, after a short computation, the expansion
\begin{align*}
|\ell_{\alpha}^{\perp}(f_{+\ii})|^{2}=\frac{1}{1+\alpha^{2}}\bigg(\alpha^{2}+k^{2}&+\frac{\alpha^{2}-k^{2}}{k}\int_{0}^{\infty}\sin(2ky)q(y)\dd y \\
&\hskip26pt-2\alpha\int_{0}^{\infty}\cos(2ky)q(y)\dd y\bigg)+\calO(q^{2}).
\end{align*}
Substituting into \eqref{eq:sa2}, we obtain the final formula
\begin{align}
 \frac{\dd\sigma}{\dd\lambda}(\lambda)=\frac{1}{\pi}\frac{1+\alpha^{2}}{k^{2}+\alpha^{2}} \bigg(k&-\frac{\alpha^{2}-k^{2}}{\alpha^{2}+k^{2}}\int_{0}^{\infty}\sin(2ky)q(y)\dd y \nonumber\\
 &+\frac{2k\alpha}{\alpha^{2}+k^{2}}\int_{0}^{\infty}\cos(2ky)q(y)\dd y\bigg)+\calO(q^{2})
 \label{eq:born_sigma}
\end{align}
for $k=\sqrt{\lambda}>0$ and $\alpha\in\bbR$. When we formally pass to the limit $\alpha\to\infty$ in the above formula, we recover the correct result \eqref{eq:ba1} for the Dirichlet case $\alpha=\infty$.

\subsection{The non-self-adjoint case}
Next we compute expansions analogous to~\eqref{eq:born_sigma} for the spectral pair $(\nu,\psi)$ for complex-valued $q\in L^{1}(\bbR)$ and $\alpha\in\bbR\cup\{\infty\}$.
Expansions for complex $\alpha$ can be calculated by the same computational procedure but the resulting formulas are more complicated. The calculation below is in many aspects analogous to the one in the proof of Lemma~\ref{lma.cont2}.

We assume $Q$ of the form~\eqref{eq:a2} and denote 
\[
u(x)=\Re q(x) \quad\mbox{ and }\quad v(x)=\Im q(x)
\]
for brevity. The integral equation \eqref{eq:e64} for $F_{-1}$ gives
\begin{align*}
F_{-1}(x,k)=\ee^{-kx}e_{-}&+\frac1k\int_x^\infty \ee^{-ky}\sinh(k(x-y))P_-Q(y)e_-\dd y
\\
&-\frac1k\int_x^\infty \ee^{-ky}\sin(k(x-y))P_+Q(y)e_-\dd y
+\calO(q^2)
\end{align*}
for $k>0$. Using the identities from~\eqref{eq:aux_id_1a}, we readily deduce
\begin{align}
F_{-1}(0,k)
=e_-&+\frac1k\int_{0}^{\infty}\ee^{-ky}\sinh(ky)u(y)\dd y\, e_{-} \nonumber\\
&-\frac{\ii}{k}\int_{0}^{\infty} \ee^{-ky}\sin(ky) v(y)\dd y\, e_{+}+\calO(q^2)
\label{eq:ss1}
\end{align}
and
\begin{align}
F_{-1}'(0,k)
=-ke_{-}&-\int_0^\infty  \ee^{-ky}\cosh(ky)u(y)\dd y\,e_- \nonumber\\
&+\ii\int_0^\infty  \ee^{-ky}\cos(ky) v(y)\dd y\,e_{+}+\calO(q^2).
\label{eq:ss1_der}
\end{align}

Similarly, starting from the integral equation~\eqref{eq:e65} for $F_{+\ii}$ (for $q$ small, we can put $r=0$) and using identities
\[
P_+Q(x)e_+=u(x) e_+, \quad 
P_-Q(x)e_+=\ii v(x)e_-, 
\]
results, for $k>0$, in equalities
\begin{align}
F_{+\ii}(0,k)&=e_{+}+\frac{1}{k}\int_0^\infty \ee^{\ii ky}\sin(ky) u(y)\dd y\,e_{+} 
+\frac{\ii}{2k}\int_0^\infty \ee^{\ii ky-ky}v(y)\dd y\, e_{-} +\calO(q^2), \label{eq:ss2} \\
F_{+\ii}'(0,k)&=\ii ke_{+}-\int_0^\infty  \ee^{\ii ky}\cos(ky)u(y)\dd y\,e_{+}
-\frac{\ii}{2}\int_0^\infty  \ee^{\ii ky-ky} v(y)\dd y\,e_{-}+\calO(q^2). \label{eq:ss2_der}
\end{align}

Recalling \eqref{e50}, we find from \eqref{eq:ss1} and \eqref{eq:ss1_der} that 
\begin{align*}
\newe(0,k)&=1+\frac{1}{k}\int_0^\infty \ee^{-ky}\sinh(ky)u(y)\dd y
+\frac{\ii}k\int_0^\infty  \ee^{-ky}\sin(ky) v(y)\dd y+\calO(q^2), \\
\newe'(0,k)&=-k-\int_0^\infty  \ee^{-ky}\cosh(ky)u(y)\dd y-\ii\int_0^\infty  \ee^{-ky}\cos(ky) v(y)\dd y+\calO(q^2).
\end{align*}
Then, for $\alpha\in\bbR$, we obtain
\begin{align}
    \ell_{\alpha}^{\perp}(\newe)&=\frac{1}{\sqrt{1+\alpha^{2}}}\bigg(\alpha-k+\frac{1}{k}\int_{0}^{\infty}\ee^{-ky}\left[\alpha\sinh(ky)-k\cosh(ky)\right]u(y)\dd y \nonumber\\
    &\hskip74pt+\frac{\ii}{k}\int_{0}^{\infty}\ee^{-ky}\left[\alpha\sin(ky)-k\cos(ky)\right]v(y)\dd y\bigg)+\calO(q^2). \label{eq:ell_alp_q_expand}
\end{align}
From here and \eqref{ee5} we deduce the formula
\begin{equation}
 \psi(\lambda)=1+\frac{2\ii}{k(\alpha-k)}\int_{0}^{\infty}\ee^{-ky}\left[\alpha\sin(ky)-k\cos(ky)\right]v(y)\dd y +\calO(q^{2}),
\label{eq:born_psi}
\end{equation}
which holds for $\lambda=\sqrt{k}>0$ and $k\neq\alpha$, if $\alpha>0$.
It is also straightforward to compute from~\eqref{eq:ell_alp_q_expand} that
\[
|\ell_{\alpha}^{\perp}(\newe)|^{2}=\frac{\alpha-k}{1+\alpha^{2}}\bigg(\alpha-k+\frac{2}{k}\int_{0}^{\infty}\ee^{-ky}\left[\alpha \sinh(ky) 
    -k \cosh(ky)\right]u(y)\dd y\bigg)+\calO(q^{2}).
\]
On the other hand, the computation of $\det\{L_\alpha^\perp(\oldF_{+\ii}),L_\alpha^\perp(\oldF_{-1})\}$ is a more laborious application of formulas \eqref{eq:ss1}, \eqref{eq:ss1_der} and \eqref{eq:ss2}, \eqref{eq:ss2_der}. 
Omitting the computational details and using \eqref{ee5}, we obtain the final formula for the density of $\nu$:
\begin{align}
 \frac{\dd\nu}{\dd\lambda}(\lambda)=\frac{1}{2\pi}\frac{1+\alpha^{2}}{k^{2}+\alpha^{2}} \bigg(k&-\frac{\alpha^{2}-k^{2}}{\alpha^{2}+k^{2}}\int_{0}^{\infty}\sin(2ky)u(y)\dd y \nonumber\\
 &+\frac{2k\alpha}{\alpha^{2}+k^{2}}\int_{0}^{\infty}\cos(2ky)u(y)\dd y\bigg)+\calO(q^{2})
 \label{eq:born_nu}
\end{align}
for $k=\sqrt{\lambda}>0$ and $\alpha\in\bbR$.

In the Dirichlet case $\alpha=\infty$, the Born approximation formulas can be obtained by passing to the limit $\alpha\to\infty$ in~\eqref{eq:born_psi} and \eqref{eq:born_nu}, which gives \eqref{eq:ba2} and \eqref{eq:ba3}.

\appendix

\section{The Schr{\" o}dinger operator $H$ with $q\in L^1(\bbR_+)$}
\label{sec:B}

\subsection{Preliminaries}
Here we recall relevant facts of the spectral theory of Schr{\" o}dinger operators on the half-line with complex potentials and indicate the proof of Proposition~\ref{prp:B1}. Our main source is \cite{DG}; we will comment on the history at the end of this appendix.

We denote by $\AC(\bbR_+)$ the set of absolutely continuous functions on $\bbR_+$ and by $\AC^1(\bbR_+)$ the set of $f\in\AC(\bbR_{+})$ such that $f'\in\AC(\bbR_+)$. For $q\in L^1(\bbR_+)$, we set 
\[
D_{\max}(q)=\{f\in L^2(\bbR_+)\cap\AC^1(\bbR_+):  -f''+qf\in L^2(\bbR_+)\}.
\]
We note that for complex-valued $q$, in general $D_{\max} (q)\not=D_{\max} (\overline{q})$; see the striking example of \cite[Lemma~4.10(2)]{DG}.

By \cite[Proposition~5.1]{DG}, any $f\in D_{\max}(q)$ extends to a $C^1$-smooth function on $[0,\infty)$ and in particular the linear functionals $f(0)$, $f'(0)$ and $\ell^\perp_\alpha(f)$ are well-defined as the corresponding limit values. Thus, the definition \eqref{eq:domainH} of the domain of $H=H(q,\alpha)$, which we write as 
\[
\Dom H(q,\alpha)=\{f\in D_{\max}(q): \ell^\perp_\alpha(f)=0\},
\]
makes sense. 

\subsection{The maximal operator and Green's identity}
Along with $H(q,\alpha)$, we consider the maximal operator $H_{\max}=H_{\max}(q)$, defined by the same differential expression as $H$ with the domain $\Dom H_{\max} (q)=D_{\max}(q)$. By \cite[Theorem 4.4(1)]{DG}, the operator $H_{\max} (q)$ is closed and densely defined.

Assuming $q\in L^{1}(\bbR_{+})$, for any $f\in \Dom H_{\max}(q)$ and $g\in \Dom H_{\max}(\overline{q})$ we have Green's identity \cite[Theorem~4.4(3)]{DG}:
\begin{equation}
\jap{H_{\max}(q)f,g}-\jap{f,H_{\max}(\overline{q})g}
=
f'(0)\overline{g(0)}-f(0)\overline{g'(0)}.
\label{eq:B1}
\end{equation}
Here it is critical that the boundary terms at infinity vanish, which is a consequence of the assumption $q\in L^1(\bbR_+)$; in fact, it would suffice to assume 
\[
\limsup_{c\to\infty}\int_{c}^{c+1}\abs{q(x)}\dd x<\infty,
\]
see \cite[Proposition~5.15]{DG}. It is precisely the vanishing of the boundary terms at infinity in \eqref{eq:B1} that is the non-self-adjoint analogue of the limit-point condition.

\begin{lemma}\label{lma:B2}
For any complex numbers $p_0$ and $p_1$, there exists $f\in D_{\max}(q)$ satisfying $f(0)=p_0$ and $f'(0)=p_1$ and compactly supported in $[0,\infty)$. 
\end{lemma}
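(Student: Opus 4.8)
The plan is to avoid the naive attempt of taking $f$ to be a fixed smooth bump carrying the prescribed Cauchy data. That attempt fails precisely because $q\in L^1(\bbR_+)$ need not belong to $L^2(\bbR_+)$: for a generic smooth compactly supported $f$ the product $qf$ need not be square-integrable, and then $-f''+qf\notin L^2(\bbR_+)$. The key idea is that this obstruction vanishes if $f$ \emph{solves the homogeneous equation} on the region where $q$ is active, since there the term $qf$ is cancelled against $-f''$.

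First I would invoke the standard initial value theory for $q\in L^1_{\mathrm{loc}}(\bbR_+)$. The problem
\begin{equation*}
-u''+qu=0,\qquad u(0)=p_0,\quad u'(0)=p_1,
\end{equation*}
is equivalent to the Volterra integral equation
\begin{equation*}
u(x)=p_0+p_1 x+\int_0^x (x-t)\,q(t)\,u(t)\,\dd t,
\end{equation*}
which has a unique solution $u$ by Picard iteration, with $u$ and $u'$ absolutely continuous on every bounded subinterval of $[0,\infty)$ (since $qu\in L^1_{\mathrm{loc}}$).

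Next I would cut off. Fix $\chi\in C^\infty$ with $\chi\equiv1$ on $[0,1]$ and $\supp\chi\subset[0,2]$, and set $f=\chi u$. Then $f$ is compactly supported in $[0,\infty)$; it is continuous with compact support, hence in $L^2(\bbR_+)$; and since $\chi$ is smooth and $u\in\AC^1$ on $[0,2]$, the product $f=\chi u$ lies in $\AC^1(\bbR_+)$. Because $\chi\equiv1$ near the origin, $f(0)=u(0)=p_0$ and $f'(0)=u'(0)=p_1$, which supplies the required Cauchy data.

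It remains to verify $-f''+qf\in L^2(\bbR_+)$. Expanding the derivatives and using $-u''+qu=0$ yields
\begin{equation*}
-f''+qf=\chi\,(-u''+qu)-2\chi'u'-\chi''u=-2\chi'u'-\chi''u,
\end{equation*}
which is exactly where the potentially non-square-integrable term $qf$ cancels. Since $\chi'$ and $\chi''$ are smooth and supported in $[1,2]$, where $u$ and $u'$ are bounded, the right-hand side is continuous with compact support and therefore lies in $L^2(\bbR_+)$. Thus $f\in D_{\max}(q)$ and meets all requirements. The only genuinely subtle point is the opening observation that $qf$ must be neutralised by forcing $f$ to obey the homogeneous equation where $q$ lives; the remaining steps are routine.
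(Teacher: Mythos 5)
Your proof is correct and follows essentially the same route as the paper: the paper likewise multiplies a function carrying the prescribed Cauchy data by a smooth cutoff and observes that the resulting commutator terms $-2\varphi'g'-\varphi''g$ are in $L^2(\bbR_+)$. The only difference is the source of that function: the paper cites \cite[Proposition~2.5]{DG} for the existence of $g\in\AC^1([0,\infty))$ with $g(0)=p_0$, $g'(0)=p_1$ and $-g''+qg\in L^2(\bbR_+)$, whereas you construct it directly as the solution of the homogeneous equation $-u''+qu=0$ via the Volterra integral equation and Picard iteration, which is a self-contained and equally valid substitute.
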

\begin{proof}
Let $p_0$ and $p_1$ be given. By \cite[Proposition~2.5]{DG}, there exists $g\in\AC^1([0,\infty))$ with $g(0)=p_0$, $g'(0)=p_1$ and $-g''+qg\in L^2(\bbR_+)$. Set $f=g\varphi$, where $\varphi$ is $C^2$-smooth, $\varphi(x)=1$ for $x\leq1$ and $\varphi(x)=0$ for $x\geq2$. Then $f\in \AC^1([0,\infty))\cap L^2(\bbR_+)$, $f$ satisfies the needed boundary conditions at zero and  
\[
-f''+qf=(-g''+qg)\varphi-2g'\varphi'-g\varphi''\in L^2(\bbR_+). 
\]
Thus, $f\in D_{\max}(q)$, as claimed. 
\end{proof}

\subsection{The minimal operator}
Using the notation of \cite[Definition~4.3]{DG}, we denote by $H_c(q)$ the restriction of $H_{\max}(q)$ onto the domain 
\[
D_c(q)=\{f\in D_{\max}(q): f \text{ is compactly supported in }(0,\infty)\}. 
\]
Since $H_{c}(q)\subset H_{\max}(q)$ and $H_{\max}(q)$ is closed, $H_{c}(q)$ is closable and we denote by $H_{\min}(q)$ its operator closure. According to parts (1), (2) and (5) of \cite[Theorem~4.4)]{DG}, the operator $H_{\min}(q)$ is densely defined, $H_{\min}(q)^*=H_{\max}(\overline{q})$, $H_{\max}(q)^*=H_{\min}(\overline{q})$ and 
\begin{equation}
\Dom H_{\min}(q)=\{f\in D_{\max}(q): f(0)=f'(0)=0\}.
\label{eq:B2}
\end{equation}
The fact that $H_{\min}(q)$ is the closure of $H_c(q)$ can be rephrased as follows: the set $D_c(q)$ is dense in $\Dom H_{\min}(q)$ in the graph norm of $H$.

\subsection{Proof of Proposition~\ref{prp:B1}}
In \cite[Theorem~4.4(1)]{DG}, it is proved that $\Dom H_{\min}(q)$ is dense in $L^2(\bbR_+)$. Hence, $\Dom H(q,\alpha)$ is also dense in $L^2(\bbR_+)$. The closedness of $H(q,\alpha)$ will follow from $H(q,\alpha)^*=H(\overline{q},\overline{\alpha})$, rewriting it as $H(q,\alpha)=H(\overline{q},\overline{\alpha})^*$. 

Let us prove that $H(q,\alpha)^*=H(\overline{q},\overline{\alpha})$. Let $f\in\Dom H(q,\alpha)$, $g\in \Dom H(\overline{q},\overline{\alpha})$. Then $\ell_\alpha^\perp(f)=\ell_{\overline{\alpha}}^\perp(g)=0$ and therefore the boundary terms in \eqref{eq:B1} vanish: 
\[
\jap{H(q,\alpha)f,g}=\jap{f,H(\overline{q},\overline{\alpha})g}.
\]
This proves that $H(\overline{q},\overline{\alpha})\subset H(q,\alpha)^*$. 

Let us prove the converse inclusion $H(q,\alpha)^*\subset H(\overline{q},\overline{\alpha})$. 
Note that $H_{\min}(q)\subset H(q,\alpha)$, hence $H(q,\alpha)^*\subset H_{\min}(q)^*=H_{\max}(\overline{q})$.  It remains to prove that any $g\in\Dom H(q,\alpha)^*$ satisfies the boundary condition $\ell_{\overline{\alpha}}^\perp(g)=0$. Using Lemma~\ref{lma:B2}, let us take any $f\in\Dom H_{\max}(q)$ such that
\begin{align*}
f(0)&=1,\quad f'(0)=-\alpha,\quad  \text{ if $\alpha\in\bbC$},
\\
f(0)&=0,\quad f'(0)=1,  \quad\quad\text{ if $\alpha=\infty$}. 
\end{align*}
Then $f\in \Dom H(q,\alpha)$ and so we have 
\[
\jap{H_{\max}(q)f,g}
=\jap{H(q,\alpha)f,g}
=\jap{f,H(q,\alpha)^*g}
=\jap{f,H_{\max}(\overline{q})g}.
\]
By \eqref{eq:B1}, it follows that 
\[
f'(0)\overline{g(0)}=f(0)\overline{g'(0)}.
\]
Substituting the values of $f(0)$ and $f'(0)$, we find that $\ell_{\overline{\alpha}}^\perp(g)=0$, as required. 
 \qed

\subsection{A dense set in the domain of $H(q,\alpha)$}
Here we give a lemma that will be needed in Appendix~\ref{sec:C}. 
\begin{lemma}\label{prp:B1a}
The set 
\[
\{f\in \Dom H(q,\alpha): f \text{ is compactly supported in $[0,\infty)$} \}
\]
is dense in $\Dom H(q,\alpha)$ in the graph norm of $H(q,\alpha)$. 
\end{lemma}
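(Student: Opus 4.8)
The plan is to reduce the statement to the already-recorded density of $D_c(q)$ in $\Dom H_{\min}(q)$ by peeling off a single compactly supported function that carries the boundary data at the origin. A naive truncation $f\mapsto\chi_n f$ would preserve the boundary condition at $0$, but then one must control the error term $\chi_n'f'$ in $L^2$ over the tail $[n,n+1]$, which is awkward because one does not have $f'\in L^2$ at hand. The decomposition below sidesteps this entirely, so the argument is essentially bookkeeping with the boundary functionals.

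Fix $f\in\Dom H(q,\alpha)$, so that $f\in D_{\max}(q)$ and $\ell_\alpha^\perp(f)=0$. First I would invoke Lemma~\ref{lma:B2} to produce a compactly supported $h\in D_{\max}(q)$ with $h(0)=f(0)$ and $h'(0)=f'(0)$. Since $\ell_\alpha^\perp$ depends only on the boundary values $f(0),f'(0)$ (see \eqref{eq:nsa6}, \eqref{eq:nsa7}), we get $\ell_\alpha^\perp(h)=\ell_\alpha^\perp(f)=0$, so $h\in\Dom H(q,\alpha)$ and is compactly supported in $[0,\infty)$. Now set $g:=f-h\in D_{\max}(q)$; by construction $g(0)=g'(0)=0$, so the characterisation \eqref{eq:B2} gives $g\in\Dom H_{\min}(q)$.

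Next I would use the fact, recorded immediately after \eqref{eq:B2}, that $D_c(q)$ is dense in $\Dom H_{\min}(q)$ in the graph norm (which, since both operators act by $u\mapsto-u''+qu$, is the same norm as the graph norm of $H(q,\alpha)$): choose $\phi_k\in D_c(q)$ with $\phi_k\to g$ in the graph norm. Setting $f_k:=h+\phi_k$, each $f_k$ is compactly supported in $[0,\infty)$ and lies in $D_{\max}(q)$; moreover $\phi_k$ is compactly supported in $(0,\infty)$, so $\phi_k(0)=\phi_k'(0)=0$ and hence $\ell_\alpha^\perp(f_k)=\ell_\alpha^\perp(h)+\ell_\alpha^\perp(\phi_k)=0$, giving $f_k\in\Dom H(q,\alpha)$. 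Finally $f_k=h+\phi_k\to h+g=f$ in the graph norm, which is the claim. The only genuine input is the density of $D_c(q)$ in $\Dom H_{\min}(q)$, i.e. the fact that $H_{\min}(q)$ is the closure of $H_c(q)$; there is no real obstacle beyond correctly matching the boundary data with $h$ so that the residual $g$ falls into the minimal domain.
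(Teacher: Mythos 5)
Your proof is correct and follows essentially the same route as the paper's own argument: the same decomposition of $f$ into a compactly supported piece (via Lemma~\ref{lma:B2}) matching the boundary data plus a remainder in $\Dom H_{\min}(q)$ characterised by \eqref{eq:B2}, followed by the density of $D_c(q)$ in $\Dom H_{\min}(q)$. The only difference is cosmetic (your $g$ and $h$ swap roles relative to the paper's notation, and you spell out the boundary-functional checks that the paper leaves implicit).
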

\begin{proof}
Let $f\in\Dom H(q,\alpha)$. Using Lemma~\ref{lma:B2}, take $g\in D_{\max}(q)$ compactly supported in $[0,\infty)$ and satisfying $g(0)=f(0)$, $g'(0)=f'(0)$. Then $h=f-g$ belongs to $H_{\min}(q)$, see \eqref{eq:B2}. Hence, by the definition of $H_{\min}(q)$, there exists a sequence of elements $h_n\in D_c(q)$ with $h_n\to h$ in the graph norm of $H$ as $n\to\infty$. Denote $f_n=h_n+g$; then $f_n\to f$ in the graph norm of $H$. Furthermore, each $f_n$ is compactly supported in $[0,\infty)$, belongs to $D_{\max}(q)$ and satisfies the same boundary condition at zero as $g$, i.e. $\ell_\alpha^\perp(f_n)=0$. Thus, $f_n\in\Dom H(q,\alpha)$. 
\end{proof}

\subsection{Concluding remarks}
Of course, the subject of this appendix is classical and goes back to Weyl and Titchmarsh, who worked with real continuous $q$. Even in the case of real $q$, the description of realisations of $H$ as a closed operator in $L^2$ under the sole assumption $q\in L_{\text{loc}}^1$ has significant technical difficulties because in general there is no known convenient dense set in the domain of $H$.  As far as we are aware, the technical difficulties associated with real $q\in L_{\text{loc}}^1$ were first resolved by Stone \cite{Stone}; see also Krein \cite{Krein} in the finite interval case and Glazman \cite{Glazman} in the half-line case. The extension of the Weyl limit-point/limit-circle analysis to complex potentials goes back to Sims \cite{Sims} and Everitt \cite{Everitt}.

We use a more modern (and very systematic) paper Derezinski--Georgescu \cite{DG}, which is a synthesis of earlier work on this subject. It is specifically focused on describing closed extensions of $H_{\min}(q)$ for complex $q$ under the sole assumption $q\in L_{\text{loc}}^1(\bbR_+)$ (\cite{DG} also consider the full-line and the finite-interval cases). We note that \cite{DG} is presented in the language of \emph{transposed operators} instead of adjoints, but it is easy to translate their statements into the language of adjoints.

\section{The hermitised operator $\bH$ with $q\in L^1(\bbR_+)$}
\label{sec:C}

Here we briefly comment on proofs of Propositions~\ref{prp.Mf} and \ref{prp.sm}. 

\subsection{The dimension lemma}

\begin{lemma}[The dimension lemma]\label{lem:dim_S}
For any $\lambda\in\bbC\setminus\bbR$, the dimension of the space
\begin{equation}
S(\lambda)=\{F\in L^{2}(\bbR_{+};\bbC^{2}) : F \mbox{ is a solution of~\eqref{eq:init_diff_eq}}\}
\label{eq.dim}
\end{equation}
equals $2$.
\end{lemma}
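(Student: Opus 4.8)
The plan is to prove the lemma directly from the asymptotic classification of solutions furnished by Theorem~\ref{thm.e1}, rather than through abstract deficiency-index theory. Fix $\lambda\in\bbC\setminus\bbR$ and pick a square root $k=a+\ii b$ with $k^2=\lambda$; since $\lambda$ is non-real we have $\lambda=(a^2-b^2)+2\ii ab\notin\bbR$, hence $ab\neq0$, i.e.\ both $a\neq0$ and $b\neq0$. The equation \eqref{eq:init_diff_eq} is a second-order $2\times2$ system, so its full solution space is four-dimensional, and by the first remark following Theorem~\ref{thm.e1} the four Jost solutions $\oldF_{-1},\oldF_{+1},\oldF_{+\ii},\oldF_{-\ii}$ (for this $k$) form a basis of it. The strategy is to read off which linear combinations lie in $L^2(\bbR_+;\bbC^2)$ purely from the leading exponential rates of the four basis solutions.

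The pair $\oldF_{\pm1}$ has leading direction $e_-$ and rates $\Re(\pm k)=\pm a$, so it contributes exactly one exponentially decaying and one exponentially growing solution; likewise the pair $\oldF_{\pm\ii}$ has leading direction $e_+$ and rates $\Re(\mp\ii k)=\mp b$, again one decaying and one growing. Thus exactly two of the four Jost solutions decay exponentially and therefore belong to $L^2(\bbR_+;\bbC^2)$, which already gives $\dim S(\lambda)\geq2$.

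For the upper bound, suppose $F=\sum_j c_j\oldF_j\in S(\lambda)$ and let $\rho$ be the largest real part of an exponent among the indices $j$ with $c_j\neq0$. If $\rho>0$, then the terms realising this maximal rate dominate the asymptotics of $F$. The key observation is that the \emph{two} growing solutions have leading directions $e_-$ and $e_+$ respectively (one coming from each of the families $\{\oldF_{\pm1}\}$, $\{\oldF_{\pm\ii}\}$), and since $\jap{e_+,e_-}_{\bbC^2}=0$ their leading contributions can never cancel: even when the two growing rates coincide, the leading term of $F$ has the form $\ee^{\rho x}\bigl(c'\ee^{\ii\theta_1 x}e_-+c''\ee^{\ii\theta_2 x}e_+\bigr)$ whose $\bbC^2$-norm is the constant $\sqrt{2(\abs{c'}^2+\abs{c''}^2)}>0$. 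Hence $\norm{F(x)}\gtrsim\ee^{\rho x}$ along $x\to\infty$, contradicting $F\in L^2$. Therefore the coefficients on both growing solutions vanish, $S(\lambda)$ is spanned by the two decaying Jost solutions, and $\dim S(\lambda)=2$. The only delicate point is precisely this non-cancellation at the maximal rate; the orthogonality $e_+\perp e_-$ makes it transparent and removes any case distinction over the relative sizes of $\abs a$ and $\abs b$.

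As a cross-check, and an alternative self-contained route to the upper bound, one can use the Lagrange identity for \eqref{e30}: if $F,G$ solve \eqref{e30} with the same real-in-squared parameter one computes $\tfrac{\dd}{\dd x}[F,G]=(\overline{\lambda}-\lambda)\jap{F,G}_{\bbC^2}$ with $[\cdot,\cdot]$ as in \eqref{eq:wr-def}. For $F,G\in S(\lambda)$ the boundary term at infinity vanishes (both are combinations of the exponentially decaying Jost solutions), so integration gives $\ii[F,F](0)=-2\Im\lambda\,\norm{F}_{L^2}^2$ on $S(\lambda)$. The Hermitian form $\ii[\cdot,\cdot](0)$ on the four-dimensional space of Cauchy data $\bigl(F(0),F'(0)\bigr)$ has signature $(2,2)$, because its matrix is $\bigl(\begin{smallmatrix}0&\ii\epsilon\\-\ii\epsilon&0\end{smallmatrix}\bigr)$ and $\epsilon$ has eigenvalues $\pm1$. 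Since $\ii[\cdot,\cdot](0)$ is definite on (the Cauchy data of) $S(\lambda)$, that subspace has dimension at most $2$, recovering the upper bound independently of the explicit mode-counting above.
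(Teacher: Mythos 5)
Your main argument is correct, but it follows a genuinely different route from the paper's. The paper does not do mode counting at every non-real $\lambda$: it first observes that $\dim S(\lambda)$ equals the deficiency index of $\bH$ restricted to $\Dom H_{\min}(\overline q)\oplus\Dom H_{\min}(q)$, hence is constant in each open half-plane, and then evaluates it at a single convenient point ($\lambda$ in the open first quadrant, so $k\in S_0$), where the four exponential rates $-a<-b<b<a$ are pairwise distinct and ``exactly two linearly independent $L^2$-solutions'' is immediate. You instead argue directly at an arbitrary non-real $\lambda$, which forces you to handle the case $\abs{a}=\abs{b}$ (i.e. $\lambda$ purely imaginary, $k$ on a Stokes line), and your orthogonality observation is exactly the right tool there: the leading terms of the two growing modes lie in the orthogonal directions $e_-$ and $e_+$, so they cannot cancel, and any combination containing them has norm bounded below by a multiple of $\ee^{\rho x}$. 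What your route buys is a proof that avoids deficiency-index theory altogether and stays entirely within the Jost-solution framework of Theorem~\ref{thm.e1} (which is indeed stated for all $k\neq0$, Stokes lines included); what the paper's route buys is brevity, at the cost of importing the constancy of deficiency indices in half-planes.

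Two caveats. First, your Lagrange-identity argument is a legitimate consistency check but not the ``self-contained alternative route'' you claim: to make the boundary term at infinity vanish you invoke that elements of $S(\lambda)$ ``are combinations of the exponentially decaying Jost solutions'', which is precisely the upper bound being re-derived. For a general $L^2$-solution, without the mode decomposition, there is no a priori control of $F'$ at infinity, so $[F,F](x)\to0$ is not automatic; the signature-$(2,2)$ computation is correct, but the argument as written is circular if taken as an independent proof. Second, a sign typo: the exponent of $\oldF_{\pm\ii}$ is $\pm\ii k$, with $\Re(\pm\ii k)=\mp b$, whereas you wrote $\Re(\mp\ii k)=\mp b$; the conclusion (one growing and one decaying mode in each family) is unaffected.
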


\begin{proof}
The dimension of $S(\lambda)$ for $\lambda$ in the upper and lower half-planes coincides with the deficiency indices of the operator $\bH$ restricted to $\Dom H_{\min}(\overline{q})\oplus\Dom H_{\min}(q)$. In particular, $\dim S(\lambda)$ is constant in the upper and lower half-planes. Take, for example, $\lambda$ in the open first quadrant. Then by Theorem~\ref{thm.e1} there are exactly two linearly independent $L^2$-solutions of the equation \eqref{eq:init_diff_eq}, and so the dimension of $S(\lambda)$ equals 2. The same argument applies for the lower half-plane. 
\end{proof}

\subsection{Proof of Proposition~\ref{prp.Mf}}
The most difficult part of the proof of Proposition~\ref{prp.Mf} is the existence and uniqueness of the matrix $M_\alpha(\lambda)$; it is usually proved in the framework of Weyl's limit-point theory, see e.g. \cite[Appendix~A.2]{PS3} for a self-contained presentation. 

We would like to explain that in our setting $q\in L^1(\bbR_+)$, the existence and uniqueness of the matrix $M_\alpha(\lambda)$ follow easily from the dimension lemma. Fix $\lambda\in\bbC\setminus\bbR$ and let $S(\lambda)$ be as in~\eqref{eq.dim}. Let $\Phi_1$ and $\Phi_2$ be the columns of the solution $\Phi$ and similarly let $\Theta_1$ and $\Theta_2$ be the columns of $\Theta$. We note that $\Phi_1$ and $\Phi_2$ are linearly independent and moreover the linear span of $\Phi_1$ and $\Phi_2$ has zero intersection with $S(\lambda)$, as non-real $\lambda$ cannot be an eigenvalue of $\bH$. It follows that any solution of the eigenvalue equation \eqref{eq:init_diff_eq} can be represented in a unique way as a sum of a solution from $S(\lambda)$ and a solution from the linear span of $\Phi_1$ and $\Phi_2$. Thus, for each non-real $\lambda$ there exists a unique set of coefficients $m_{jk}$ such that
\[
\Theta_1+m_{11}\Phi_1+m_{21}\Phi_2\in S(\lambda)
\quad \text{ and } \quad
\Theta_2+m_{21}\Phi_1+m_{22}\Phi_2\in S(\lambda).
\]
This gives the matrix $M_\alpha(\lambda)$. 

Let us prove the analyticity of $M_\alpha(k^2)$ for $k$ in the first quadrant. The analyticity in $k$ in the  sector $S_0$ and continuity up to the line $\arg k=\pi/4$ follows by the representation of Lemma~\ref{lma:e3a} and by the analyticity and continuity properties of solutions $\myF_{-1}$ and $\myF_{+\ii}$ proved in Theorem~\ref{thm.e2}. In a completely analogous way, one proves the analyticity of $M_\alpha(k^2)$ in the sector $\pi/4<\arg k<\pi/2$ and continuity up to the line $\arg  k=\pi/4$. It is important that the limiting values of $M_\alpha$ on the diagonal $\arg k=\pi/4$ agree, as follows from the uniqueness of the definition of the $M$-function. By a standard argument involving Morera's theorem, we find that $M_\alpha(k^2)$ is analytic in the open first quadrant. 

To complete the proof of Proposition~\ref{prp.Mf}, it remains to prove the property \eqref{eq:a4a}. 
This property follows from the identity 
\begin{equation}
\Im M_{\alpha}(\lambda)=\Im\lambda\int_{0}^{\infty}X(x,\lambda)^{*}X(x,\lambda)\dd x,
\label{eq:id}
\end{equation}
where $\lambda\in\bbC\setminus\bbR$ and $X(\cdot,\lambda)$ is the solution defined by \eqref{eq:X}. To derive \eqref{eq:id}, it suffices to check the identity
\[
\bigl(X'(x,\lambda)^{*}\epsilon X(x,\lambda)-X(x,\lambda)^{*}\epsilon X'(x,\lambda)\bigr)'
=
(\lambda-\overline{\lambda})X(x,\lambda)^{*}X(x,\lambda),
\]
integrate it over $x$ from $0$ to $\infty$ and take into account the boundary conditions \eqref{eq:L_X} and \eqref{eq:L_X_perp} at $x=0$.

\subsection{Proof of Proposition~\ref{prp.sm}(i)}
Let $L^{2}_{\Sigma}(\bbR;\bbC^{2})$ be the space of all $\bbC^2$-valued functions on $\bbR$ equipped with the inner product
\[
\jap{F,G}=\int_{-\infty}^\infty \jap{\dd\Sigma(\lambda)F(\lambda),G(\lambda)}_{\bbC^2}.
\]
Define the operator $U: L^2(\bbR_+;\bbC^2)\to L_{\Sigma}^{2}(\bbR;\bbC^{2})$ by
\[
(UF)(\lambda):=\int_0^\infty \Phi(x,\lambda)^{*}F(x)\dd x
\]
initially on the set of smooth compactly supported functions $F$,  and let $\Lambda$ be the operator of multiplication by the independent variable in $L_{\Sigma}^{2}(\bbR;\bbC^{2})$. Proposition~\ref{prp.sm}(i) can be stated more precisely as follows. 

\begin{proposition}
The operator $U$ extends to a unitary map from $L^2(\bbR_+;\bbC^2)$ onto $L_{\Sigma}^{2}(\bbR;\bbC^{2})$. 
Moreover, $U$ intertwines $\bH$ with $\Lambda$, i.e. $U\Dom \bH\subset \Dom\Lambda$ and 
\begin{equation}
U\bH F=\Lambda UF
\label{eq:intertwine}
\end{equation}
for all $F\in\Dom \bH$.
\end{proposition}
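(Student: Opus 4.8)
\emph{Strategy.} The plan is to prove the three assertions — that $U$ is isometric, that it intertwines $\bH$ with $\Lambda$, and that it is surjective — by the classical Weyl--Titchmarsh--Kodaira eigenfunction expansion method, now applied to the self-adjoint matrix operator $\bH$. The regular solution $\Phi(\cdot,\lambda)$ of \eqref{eq:init_diff_eq} with Cauchy data \eqref{eq:phi,theta_bc_cond} is the integral kernel of $U$, and the matrix measure $\Sigma$ appearing in the Herglotz representation \eqref{eq:intres} of $M_\alpha$ will turn out to be exactly the spectral measure of $\bH$ in this representation. Throughout I would work on the dense set of $F\in\Dom\bH$ that are compactly supported in $[0,\infty)$; density of this set in the graph norm follows by applying Lemma~\ref{prp:B1a} to each of the two components of $\Dom\bH=\Dom H^*\oplus\Dom H$.

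\emph{Isometry (Parseval).} First I would record the resolvent $(\bH-\lambda)^{-1}$, $\lambda\in\bbC\setminus\bbR$, as an integral operator whose kernel is assembled from the regular solution $\Phi(\cdot,\lambda)$ and the $L^2$-solution $X(\cdot,\lambda)$ of Proposition~\ref{prp.Mf}, normalised by the (constant) matrix Wronskian computed from \eqref{eq:wr4} and the boundary data \eqref{eq:L_X}, \eqref{eq:L_X_perp}. Pairing this kernel against a test function and using $X=\Theta-\Phi M_\alpha$, one obtains for compactly supported smooth $F$ the identity
\[
\jap{(\bH-\lambda)^{-1}F,F}
=\int_{\bbR}\frac{1}{t-\lambda}\,\jap{\dd\Sigma(t)(UF)(t),(UF)(t)}_{\bbC^2},
\qquad \Im\lambda\neq0.
\]
Both sides are Herglotz functions of $\lambda$, and Stieltjes inversion combined with Stone's formula for the spectral projections $E_{\bH}$ yields $\jap{E_{\bH}(\Delta)F,F}=\int_\Delta\jap{\dd\Sigma(t)(UF)(t),(UF)(t)}_{\bbC^2}$ for every interval $\Delta$. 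Letting $\Delta\uparrow\bbR$ gives $\norm{UF}_{L^2_\Sigma}=\norm{F}_{L^2}$, and $U$ extends by continuity to an isometry on all of $L^2(\bbR_+;\bbC^2)$.

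\emph{Intertwining.} For compactly supported $F\in\Dom\bH$ I would integrate $\int_0^\infty\Phi(x,\lambda)^*(\bH F)(x)\,\dd x$ by parts twice. Since $\Phi(\cdot,\lambda)$ solves $-\eps\Phi''+Q\Phi=\lambda\Phi$ with $\lambda$ real and $Q^*=Q$, $\eps^*=\eps$, the interior terms reproduce $\lambda\,(UF)(\lambda)$, the boundary term at infinity vanishes by compact support, and the boundary term at $x=0$ is precisely the Wronskian-type boundary form of $\Phi$ and $F$ given by \eqref{eq:wr4}, which vanishes because $L_\alpha^\perp(\Phi)=0$ and $L_\alpha^\perp(F)=0$. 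This gives $(U\bH F)(\lambda)=\lambda\,(UF)(\lambda)$, i.e. \eqref{eq:intertwine}, on the dense set, hence on all of $\Dom\bH$; in particular $U\Dom\bH\subset\Dom\Lambda$.

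\emph{Surjectivity, and the main obstacle.} The hard part is completeness: that $U$ is onto, equivalently that $\Sigma$ is the \emph{full} spectral measure rather than a proper reduction of it. I would introduce the candidate inverse $V\colon L^2_\Sigma(\bbR;\bbC^2)\to L^2(\bbR_+;\bbC^2)$, $(Vg)(x)=\int_{\bbR}\Phi(x,t)\,\dd\Sigma(t)g(t)$, identify $V=U^*$ via Fubini on the dense sets, and reduce surjectivity to the completeness relation $UV=I$ on $L^2_\Sigma(\bbR;\bbC^2)$. This relation is the genuinely nontrivial input of Weyl--Kodaira theory: it expresses that the representing measure of $M_\alpha$ in \eqref{eq:intres} is exactly the spectral measure of $\bH$. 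In our setting it can be secured from the spectral information of Theorem~\ref{thm.a1} — the absence of singular continuous spectrum, the rank-one a.c. density, and the simplicity of the non-zero eigenvalues — which shows that the columns $\Phi_1,\Phi_2$ generate $L^2(\bbR_+;\bbC^2)$ under $\bH$, so that $\ker U^*=\{0\}$. Once $U$ is an isometry with dense (hence closed) range, it is unitary, which completes the proof.
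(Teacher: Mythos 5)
Your intertwining step is precisely the paper's key new point. The paper's own proof is short: it regards the proposition as essentially known (reducible to the Hamiltonian-systems theory of Hinton--Shaw \cite{HS}), cites the self-contained proof in \cite[Appendix~A.5]{PS3} written for bounded $q$, and observes that only the intertwining step there used boundedness; the repair is exactly what you propose, namely integration by parts over the set of compactly supported $F\in\Dom\bH$, which is dense in the graph norm by Lemma~\ref{prp:B1a} applied componentwise to $\Dom\bH=\Dom H^*\oplus\Dom H$, with the boundary term at $x=0$ killed by \eqref{eq:wr4} since $L_\alpha^\perp(\Phi)=L_\alpha^\perp(F)=0$. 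Your isometry outline (resolvent kernel, Herglotz functions, Stieltjes inversion and Stone's formula) is the standard route and is consistent with what \cite{PS3} does.

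The gap is in your surjectivity step, and it is twofold. First, circularity: in this paper, Theorem~\ref{thm.a1}(ii)--(iii) --- absence of singular continuous spectrum of $\bH$, absolutely continuous spectrum equal to $\bbR$ with multiplicity one --- are themselves deduced from properties of the measure $\Sigma$ \emph{via} the unitary equivalence of Proposition~\ref{prp.sm}(i), i.e.\ via the very statement you are proving. What is established independently of it (from the Jost solutions) are facts about $M_\alpha$ and $\Sigma$ --- continuity up to the real axis, rank-one $\Im M_\alpha(\lambda+\ii 0)$, rank-one point masses --- not facts about the operator $\bH$. So Theorem~\ref{thm.a1} is not available to you here. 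Second, even granting all those facts, the inference ``hence $\Phi_1,\Phi_2$ generate $L^2(\bbR_+;\bbC^2)$ under $\bH$, so $\Ker U^*=\{0\}$'' is a non sequitur. An isometry $U$ intertwining $\bH$ with $\Lambda$ has closed range reducing $\Lambda$; since $\Lambda$ on $L^2_\Sigma(\bbR;\bbC^2)$ has multiplicity one, a proper reducing subspace has the form $\chi_S L^2_\Sigma$ with $\Sigma(\bbR\setminus S)>0$, and nothing in the multiplicity data excludes this: one must rule out precisely that the Herglotz representing measure $\Sigma$ of \eqref{eq:intres} is strictly ``larger'' than the spectral content of $\bH$ (abstractly this can happen, e.g.\ $f\mapsto(f\chi_{[0,1]},0)$ into an $L^2_\Sigma$ with rank-one density supported on a larger set is isometric and intertwining with proper range). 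Excluding it is the genuinely nontrivial completeness relation $UU^*=I$ of Weyl--Kodaira theory, and it needs a direct proof --- e.g.\ a Stieltjes-inversion argument applied to the resolvent sandwiched against compactly supported $g\in L^2_\Sigma$, or the machinery of \cite{HS} --- rather than an appeal to spectral multiplicity. In the paper this step is exactly the part of \cite[Appendix~A.5]{PS3} that carries over to $q\in L^1(\bbR_+)$ unchanged; writing blind, you needed to supply such an argument, and the one you give does not close.
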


This proposition should be regarded as known; it can be reduced to the theory of Hamiltonian systems developed in great generality by Hinton and Shaw \cite{HS}. However, we found that extracting relevant facts from the literature and translating them into the language of the operator $\bH$ is not an easy task, and therefore in \cite[Appendix~A.5]{PS3} we gave a self-contained proof for bounded $q$. Only Step 2 (the proof of the intertwining relation \eqref{eq:intertwine}) of this proof relied on the boundedness of $q$. Let us explain where the difficulty is and how to overcome it for $q\in L^1(\bbR_+)$. At its core, the proof is a simple integration by parts:
\begin{align*}
U{\bH}F(\lambda)
&=
\int_0^\infty \Phi(x,\lambda)^*(\bH F)(x)\dd x
=
\int_0^\infty (\bH \Phi(x,\lambda))^* F(x)\dd x
\\
&=
\lambda \int_0^\infty \Phi(x,\lambda)^* F(x)\dd x
=
\lambda UF(\lambda)
\end{align*}
on a suitable set of functions $F$. The difficulty is in choosing an appropriate set of functions $F$ such that on one hand, the integration by parts can be justified and on the other hand, this set of functions is dense in $\Dom\bH$ in the graph norm of $\bH$. For bounded $q$, it suffices to take smooth functions $F$ vanishing for all sufficiently large $x$ and satisfying the boundary condition for $\bH$ at the origin. For $q\in L^1(\bbR_+)$, the above choice doesn't necessarily work. However, by Lemma~\ref{prp:B1a}, one can instead take the set of all $F\in\Dom\bH$ compactly supported in $[0,\infty)$, which resolves the difficulty.

\subsection{Proof of Proposition~\ref{prp.sm}(ii)}
Part (ii) of Proposition~\ref{prp.sm} is Theorem~1.2 of \cite{PS3}. This theorem was stated in \cite{PS3} for $q\in L^\infty(\bbR_+)$ but in fact the boundedness of $q$ was not used in their proofs. Indeed, Theorem~1.2 of \cite{PS3} follows from the symmetries of the matrix $Q$ in \eqref{eq:a2}.

\subsection{Proof of Proposition~\ref{prp.sm}(iii)}
Part (iii) of Proposition~\ref{prp.sm} is \cite[Theorem~1.1]{PS3}. One step of the proof given in \cite{PS3} relies on the boundedness of $q$ (see Lemma~A.5 of \cite{PS3}). Below we give a modified proof of \cite[Theorem~1.1]{PS3} that works for $q\in L^1(\bbR_+)$ and relies on our construction of Jost solutions. 

First we give a modified version of \cite[Proposition~3.3]{PS3}. 

\begin{proposition}\label{prop:M_func_asympt}
Assume $q\in L^1(\bbR_+)$. 
Let $\alpha\in\bbC\cup\{\infty\}$ and $\lambda=k^2$ with $0\leq\arg k\leq\pi/4$. 
Then as $\abs{k}\to\infty$ along any ray, the $M$-function satisfies
\begin{equation}
M_\alpha(\lambda)=
\epsilon A+\dfrac{1+|\alpha|^{2}}{k}(\ii P_{+}-P_{-})+\dfrac{1+\abs{\alpha}^2}{k^2}(\ii P_{+}-P_{-})\epsilon A(\ii P_{+}-P_{-})+\smallO(1/k^2),
\label{eq.Masymp1}
\end{equation}
if $\alpha\not=\infty$, and 
\begin{equation}
M_\infty(\lambda)=k(\ii P_{+}+P_{-})+\smallO(1), 
\label{eq.Masymp2}
\end{equation}
if $\alpha=\infty$. 
\end{proposition}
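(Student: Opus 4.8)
The plan is to feed the large-$\abs{k}$ asymptotics of Lemma~\ref{lma.cont2} into the representation of the $M$-function from Lemma~\ref{lma:e3a}. For $\abs{k}$ large we may take $r=0$ in \eqref{e8b}, so that \eqref{e4} reads
\[
M_\alpha(k^2)=L_\alpha(E)\,\bigl(L_\alpha^\perp(E)\bigr)^{-1}\epsilon,
\qquad E=\{\myF_{+\ii},\myF_{-1}\}.
\]
The structural point that makes the inversion manageable is that Lemma~\ref{lma.cont2} yields
\[
E(0,k)=W D_a+\smallO(1/k),
\qquad
E'(0,k)=W D_b+\smallO(1),
\]
where $W=\{e_+,e_-\}$ is the constant invertible matrix with $W^{-1}=\tfrac12 W$, $W^{-1}\epsilon W=\operatorname{diag}(1,-1)$ and $W\operatorname{diag}(c_+,c_-)W^{-1}=c_+P_++c_-P_-$, and
\[
D_a=\operatorname{diag}\!\Bigl(1+\tfrac{\ii q_0}{k},\,1+\tfrac{q_0}{k}\Bigr),
\qquad
D_b=\operatorname{diag}(\ii k-q_0,\,-k-q_0)
\]
are diagonal. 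Conjugating by $W$ turns the whole computation into scalar manipulations in the $e_\pm$-basis.

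I would dispose of the Dirichlet case $\alpha=\infty$ first, where $L_\infty=F'(0)$, $L_\infty^\perp=F(0)$, and hence $M_\infty=E'(0)E(0)^{-1}\epsilon=W\bigl(D_bD_a^{-1}\bigr)W^{-1}\epsilon+\smallO(1)$. The ratios of corresponding diagonal entries of $D_b$ and $D_a$ equal $\ii k+\smallO(1)$ and $-k+\smallO(1)$, the $\calO(1)$ constants cancelling, so $D_bD_a^{-1}=kJ+\smallO(1)$ with $J=\operatorname{diag}(\ii,-1)$. Since $WJW^{-1}=\ii P_+-P_-$ and $(\ii P_+-P_-)\epsilon=\ii P_++P_-$, this is exactly \eqref{eq.Masymp2}.

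For $\alpha\neq\infty$ the factors $\sqrt{1+\abs{\alpha}^2}$ cancel and, with $\wt A=W^{-1}AW$,
\[
M_\alpha(k^2)=W\,G\,H^{-1}W^{-1}\epsilon,
\qquad
G=\overline{\wt A}\,D_b-D_a,
\quad
H=D_b+\wt A\,D_a.
\]
Both $G,H$ are of order $k$, so $GH^{-1}=\calO(1)$ with leading term $W\overline{\wt A}W^{-1}\epsilon=\overline{A}\,\epsilon=\epsilon A$, the claimed principal part. \emph{This is where the main obstacle lies:} reading off the $1/k^2$ term of $GH^{-1}$ naively requires the $1/k$ coefficient of $D_b$, i.e. of the derivatives $\myF_j'(0,k)$, which is one order beyond Lemma~\ref{lma.cont2}. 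The device I would use to avoid this is the identity
\[
G=\overline{\wt A}\,H-\bigl(\overline{\wt A}\,\wt A+I\bigr)D_a,
\qquad\text{so that}\qquad
GH^{-1}=\overline{\wt A}-\bigl(\overline{\wt A}\,\wt A+I\bigr)D_aH^{-1}.
\]
This confines the unknown $1/k$-data of $D_b$ entirely to the factor $H^{-1}$; since $D_aH^{-1}=\calO(1/k)$, expanding it to $\smallO(1/k^2)$ needs $H$ only to order $\smallO(1)$, which Lemma~\ref{lma.cont2} does provide (via $D_b=kJ-q_0I+\smallO(1)$ and $D_a=I+\tfrac{q_0}{k}\wt J+\smallO(1/k)$, $\wt J=\operatorname{diag}(\ii,1)$).

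The rest is a finite calculation that I would not grind out entry by entry. One has $\overline{\wt A}\,\wt A+I=(1+\abs{\alpha}^2)I$ because $\overline{A}A=\abs{\alpha}^2I$, and expanding gives $D_aH^{-1}=\tfrac1k J^{-1}-\tfrac1{k^2}J^{-1}\wt A J^{-1}+\smallO(1/k^2)$, where the $q_0$-proportional contributions cancel thanks to $\wt J J^{-1}+J^{-2}=0$. Converting back by $W(\cdot)W^{-1}$, the $1/k$ part gives $\tfrac{1+\abs{\alpha}^2}{k}(\ii P_+-P_-)$, and the $1/k^2$ part gives $\tfrac{1+\abs{\alpha}^2}{k^2}(\ii P_++P_-)A(\ii P_++P_-)\epsilon$, which coincides with $\tfrac{1+\abs{\alpha}^2}{k^2}(\ii P_+-P_-)\epsilon A(\ii P_+-P_-)$ since $\ii P_++P_-$ commutes with $\epsilon$ and $(\ii P_++P_-)\epsilon=\ii P_+-P_-$. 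This is \eqref{eq.Masymp1}, completing the plan; the genuinely delicate ingredient is the displayed identity, without which one would be forced to sharpen Lemma~\ref{lma.cont2} by one order.
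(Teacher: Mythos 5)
Your proposal is correct, and it rests on the same two pillars as the paper's proof --- the representation $M_\alpha=L_\alpha(E)(L_\alpha^\perp(E))^{-1}\epsilon$ of Lemma~\ref{lma:e3a} and the asymptotics of Lemma~\ref{lma.cont2} --- but it organises the algebra differently. The paper first reduces everything to $\alpha=0$ by invoking the exact M\"obius-type transformation $M_\alpha=(\epsilon A+M_0)(I-\epsilon AM_0)^{-1}$, $M_\infty=-\epsilon M_0^{-1}\epsilon$ from \cite[Lemma~8.2]{PS3}; at $\alpha=0$ the naive substitution of Lemma~\ref{lma.cont2} into $M_0=-E(0,k)(E'(0,k))^{-1}\epsilon$ is automatically within precision (the unknown $\smallO(1)$ part of $E'(0,k)$ is tamed by the sandwiching in $(E'(0,k))^{-1}=O(1/k)$), and the full $\alpha$-dependence then comes out of exact algebra on \eqref{eq.Masymp3}. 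You instead stay at general $\alpha$, correctly identify the genuine obstacle --- that the $\smallO(1)$ precision of $E'(0,k)$ is one order short for reading off the $1/k^2$ term of $GH^{-1}$ directly --- and repair it with the identity $G=\overline{\wt A}H-(1+\abs{\alpha}^2)D_a$. It is worth noting that your identity is the M\"obius transformation in disguise: applied to the exact matrices it reads $M_\alpha=\epsilon A-(1+\abs{\alpha}^2)E(0)\bigl(E'(0)+AE(0)\bigr)^{-1}\epsilon$, which is precisely the relation $M_\alpha-\epsilon A=(1+\abs{\alpha}^2)M_0(I-\epsilon AM_0)^{-1}$ underlying the paper's reduction. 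What your route buys is self-containedness (no appeal to the external lemma from \cite{PS3}) at the cost of heavier matrix bookkeeping; what the paper's route buys is that the only asymptotic substitution occurs in the simplest case $\alpha=0$, with the $\alpha$-dependence delegated to a reusable structural fact about $M$-functions. One presentational caveat: your display $M_\alpha(k^2)=WGH^{-1}W^{-1}\epsilon$ holds only up to errors whose size is exactly the point at issue, so in a careful write-up the identity must be applied to the exact quantities $\overline{A}E'(0)-E(0)$ and $E'(0)+AE(0)$ \emph{before} any asymptotics are inserted, and only the resulting term $E(0)\bigl(E'(0)+AE(0)\bigr)^{-1}$ expanded; this is clearly what you intend (your error accounting for $H^{-1}$ and $D_aH^{-1}$ is the right one), but as literally written the equation is not exact.
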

If $\alpha=0$, the expression \eqref{eq.Masymp1}  simplifies to 
\begin{equation}
M_0(\lambda)=\dfrac{1}{k}(\ii P_{+}-P_{-})+\smallO(1/k^2).
\label{eq.Masymp3}
\end{equation}
\begin{proof}

The $M$-functions for different values of $\alpha$ are related by the following explicit transformation (see \cite[Lemma~8.2]{PS3}):
\[
\begin{aligned}
M_{\alpha}(\lambda)&=(\epsilon A+M_0(\lambda))(I-\epsilon A M_0(\lambda))^{-1}, 
&\text{ if $\alpha\not=\infty$,}
\\
M_{\infty}(\lambda)&=-\epsilon M_{0}(\lambda)^{-1}\epsilon, 
&\text{ if $\alpha=\infty$.}
\end{aligned}
\]
This transformation and some simple algebra reduce the proof to the case $\alpha=0$. 
Thus, it suffices to prove \eqref{eq.Masymp3} as $\abs{k}\to\infty$ with $0\leq\arg k\leq\pi/4$. 
We use formula \eqref{e4}, which in the case $\alpha=0$ simplifies to 
\begin{align}
M_0(\lambda)=-E(0,k)(E'(0,k))^{-1}\epsilon.
\label{e4B}
\end{align}
We recall that $E=\{\myF_{+\ii},\myF_{-1}\}$ and the large $\abs{k}$ asymptotics of the Jost solutions $\myF_{-1}$ and $\myF_{+\ii}$ in the sector $0\leq\arg k\leq\pi/4$ has been computed in Lemma~\ref{lma.cont2}. Substituting the asymptotics of Lemma~\ref{lma.cont2} into \eqref{e4B}, after some straightforward algebra we obtain \eqref{eq.Masymp3}. 
\end{proof}

Let $\bH$ (resp. $\widehat{\bH}$) be the operator \eqref{eq:a2} with a potential $q\in L^1(\bbR_+)$ (resp. $\widehat{q}\in L^1(\bbR_+)$) and boundary parameter $\alpha$ (resp. $\widehat{\alpha}$). We will use hats for various quantities associated with $\widehat{\bH}$, such as  $\widehat A$, $\widehat{M}_{\widehat{\alpha}}(\lambda)$ etc. We assume that the spectral measures of $\bH$ and $\widehat{\bH}$ coincide, i.e. $\widehat{\Sigma}=\Sigma$. 
Our first aim is to show that $\widehat{\alpha}=\alpha$ and $\widehat{M}_{\widehat{\alpha}}(\lambda)=M_\alpha(\lambda)$. The lemma below is a slightly adapted version of \cite[Lemma~5.4]{PS3}. 

\begin{lemma}
If $\widehat{\Sigma}=\Sigma$, then  $\widehat{\alpha}=\alpha$ and
$\widehat{M}_{\widehat{\alpha}}(\lambda)=M_\alpha(\lambda)$
for all $\lambda\in\bbC\setminus\bbR$.
\end{lemma}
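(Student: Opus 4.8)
The plan is to exploit the Herglotz--Nevanlinna integral representation \eqref{eq:intres} together with the large $\abs{k}$ asymptotics of Proposition~\ref{prop:M_func_asympt}. Since $\widehat\Sigma=\Sigma$, the integral terms in the representations \eqref{eq:intres} of $M_\alpha$ and $\widehat M_{\widehat\alpha}$ coincide, so their difference is a constant Hermitian matrix
\[
D:=\widehat M_{\widehat\alpha}(\lambda)-M_\alpha(\lambda)=\Re\widehat M_{\widehat\alpha}(\ii)-\Re M_\alpha(\ii),
\]
independent of $\lambda\in\bbC\setminus\bbR$ (Hermiticity is immediate from the definition of $\Re$ for matrices). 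The whole proof then reduces to showing that this single constant matrix $D$ vanishes and that $\widehat\alpha=\alpha$; the asymptotics of Proposition~\ref{prop:M_func_asympt} along the ray $\lambda=k^2$, $0\le\arg k\le\pi/4$, $\abs{k}\to\infty$, will pin both down.

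First I would rule out the mixed case where exactly one of $\alpha,\widehat\alpha$ is infinite. If, say, $\widehat\alpha=\infty$ but $\alpha\ne\infty$, then by \eqref{eq.Masymp2} and \eqref{eq.Masymp1} the difference $\widehat M_{\widehat\alpha}(\lambda)-M_\alpha(\lambda)$ grows like $k(\ii P_++P_-)$, which is incompatible with $D$ being constant since $\ii P_++P_-\ne0$. Hence $\alpha$ and $\widehat\alpha$ are simultaneously finite or simultaneously infinite. In the doubly infinite case, the leading terms $k(\ii P_++P_-)$ in \eqref{eq.Masymp2} cancel, so $D=\smallO(1)$; letting $\abs{k}\to\infty$ forces $D=0$, while $\widehat\alpha=\alpha=\infty$ holds trivially.

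The substantive case is when both are finite, where I would extract information order by order from \eqref{eq.Masymp1}. Writing $A=\operatorname{diag}(\overline\alpha,\alpha)$ as in \eqref{eq:a2a0}, the order-$k^0$ term gives $D=\epsilon(\widehat A-A)$. Since $D$ is \emph{exactly} constant, the remaining terms in the asymptotic expansion of $\widehat M_{\widehat\alpha}-M_\alpha-D$ must vanish as $\abs{k}\to\infty$: multiplying by $k$ and passing to the limit, the $1/k$ term yields $(\abs{\widehat\alpha}^2-\abs{\alpha}^2)(\ii P_+-P_-)=0$, hence $\abs{\widehat\alpha}=\abs{\alpha}$. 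Setting $c=1+\abs{\alpha}^2=1+\abs{\widehat\alpha}^2>0$, the common prefactor in the $1/k^2$ terms of \eqref{eq.Masymp1} then lets me multiply by $k^2$ and obtain
\[
c\,(\ii P_+-P_-)\,\epsilon(\widehat A-A)\,(\ii P_+-P_-)=0.
\]
Since $\ii P_+-P_-$ has eigenvalues $\ii$ and $-1$ and is therefore invertible, this forces $\epsilon(\widehat A-A)=0$, i.e.\ $\widehat A=A$ and thus $\widehat\alpha=\alpha$; consequently $D=\epsilon(\widehat A-A)=0$. In every admissible case we conclude $\widehat\alpha=\alpha$ and $\widehat M_{\widehat\alpha}(\lambda)=M_\alpha(\lambda)$ for all $\lambda\in\bbC\setminus\bbR$.

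The main obstacle is that the spectral measure $\Sigma$ determines $M_\alpha$ only up to the additive constant $D$, so the heart of the argument is to show that this ambiguity is annihilated by the high-energy behaviour. This is precisely where the second-order ($1/k^2$) term of Proposition~\ref{prop:M_func_asympt}, rather than merely the leading asymptotics, becomes indispensable, together with the elementary but crucial invertibility of $\ii P_+-P_-$.
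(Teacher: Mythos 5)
Your proposal is correct and follows essentially the same route as the paper's own proof: both deduce from the integral representation \eqref{eq:intres} that $\widehat M_{\widehat\alpha}-M_\alpha$ is a constant matrix, then use the high-energy asymptotics of Proposition~\ref{prop:M_func_asympt} to split into the cases $\widehat\alpha=\alpha=\infty$ versus both finite (ruling out the mixed case by the $\calO(k)$ growth), and finally match the $\calO(1)$, $\calO(1/k)$ and $\calO(1/k^2)$ coefficients to get $\abs{\widehat\alpha}=\abs{\alpha}$, then $\widehat A=A$, then the vanishing of the constant. The only differences are cosmetic: you extract $D=\epsilon(\widehat A-A)$ from the $\calO(1)$ term first and make explicit the invertibility of $\ii P_+-P_-$, whereas the paper proceeds $1/k$ term, then $1/k^2$ term, then revisits the $\calO(1)$ term.
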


\begin{proof}
Using the integral representation \eqref{eq:intres}, from  $\widehat{\Sigma}=\Sigma$ we conclude that  
\begin{equation}
\widehat{M}_{\widehat{\alpha}}(\lambda)-M_\alpha(\lambda)=\text{const}.
\label{eq:const}
\end{equation}
By Proposition~\ref{prop:M_func_asympt}, inspecting the asymptotics at infinity, we see that we have two possibilities: either $\widehat{\alpha}=\alpha=\infty$ or both $\widehat{\alpha}\not=\infty$ and $\alpha\not=\infty$. 

\emph{Case 1:} $\widehat{\alpha}=\alpha=\infty$. In this case, by \eqref{eq.Masymp2} we conclude that $M_\infty=\widehat{M}_{\infty}$ and the proof is complete. 

\emph{Case 2:} $\widehat{\alpha}\not=\infty$ and $\alpha\not=\infty$. 
Let us subtract the asymptotics \eqref{eq.Masymp1} for $\widehat{M}_{\widehat{\alpha}}(\lambda)$ from the same asymptotics for $M_\alpha(\lambda)$ and use \eqref{eq:const}. Inspecting the $\calO(1/k)$ terms, we find that $\abs{\widehat{\alpha}}=\abs{\alpha}$. Using this fact and inspecting the $\calO(1/k^2)$ terms, we find that $\widehat{A}=A$, i.e. $\widehat{\alpha}=\alpha$. Now revisiting the $\calO(1)$ terms, we see that they cancel and therefore $\widehat{M}_{\widehat{\alpha}}(\lambda)-M_\alpha(\lambda)=0$.  
\end{proof}
The remainder of the proof of Theorem~1.1 in \cite{PS3} ($\widehat{M}_\alpha=M_\alpha$ implies $\widehat{Q}=Q$) is based entirely on the analysis of the solutions $\Phi$ and $\Theta$ and the corresponding Volterra integral equations; this analysis does not require the boundedness of $q$.

\end{document}